\DeclareMathAlphabet{\pazocal}{OMS}{zplm}{m}{n}
\numberwithin{equation}{section}
\newcommand*\rel@kern[1]{\kern#1\dimexpr\macc@kerna}
\newcommand*\widebar[1]{%
  \begingroup
  \def\mathaccent##1##2{%
    \rel@kern{0.8}%
    \overline{\rel@kern{-0.8}\macc@nucleus\rel@kern{0.2}}%
    \rel@kern{-0.2}%
  }%
  \macc@depth\@ne
  \let\math@bgroup\@empty \let\math@egroup\macc@set@skewchar
  \mathsurround\z@ \frozen@everymath{\mathgroup\macc@group\relax}%
  \macc@set@skewchar\relax
  \let\mathaccentV\macc@nested@a
  \macc@nested@a\relax111{#1}%
  \endgroup
}
\newtheorem{theorem}{Theorem}[section]
\newtheorem{proposition}[theorem]{Proposition}
\newtheorem{lemma}[theorem]{Lemma}
\newtheorem{corollary}[theorem]{Corollary}
\theoremstyle{definition}
\newtheorem{remark}[theorem]{Remark}
\newcommand{\supp}{\operatorname{supp}}
\newcommand{\area}{\operatorname{Area}}
\newcommand{\Ec}{\pazocal{E}}
\newcommand{\Rc}{\pazocal{R}}
\newcommand{\length}{\operatorname{length}} 
\newcommand{\intinf}{\int_{-\infty}^\infty}
\newcommand {\R} {\mathbb{R}}
\newcommand {\Z} {\mathbb{Z}}
\newcommand{\eigen}{\Lambda} 
\begin{document}

\title{The Robin problem on rectangles}
\date{October 7, 2021}
\begin{abstract}
We study the statistics and the arithmetic properties of the Robin spectrum of a rectangle. A number of results are obtained for the multiplicities in the spectrum, depending on the Diophantine nature of the aspect ratio. In particular, it is shown that for the square, unlike the case of Neumann eigenvalues where there are unbounded multiplicities of arithmetic origin, there are no multiplicities in the Robin spectrum for sufficiently small (but nonzero) Robin parameter except a systematic symmetry. In addition, uniform lower and upper bounds are established for the Robin-Neumann gaps in terms of their limiting mean spacing. Finally, that the pair correlation function of the Robin spectrum on a Diophantine rectangle is shown to be Poissonian.
\end{abstract}

\author{Ze\'ev Rudnick$^{1}$ and Igor Wigman$^{2}$}
\address{$^{1}$School of Mathematical Sciences, Tel Aviv University, Tel Aviv 69978, Israel}
\address{$^{2}$Department of Mathematics, King's College London, UK}
\email[1]{rudnick@tauex.tau.ac.il}
\email[2]{igor.wigman@kcl.ac.uk}


\maketitle

\section{Statement of main results}


Let $\Omega \subset \R^2$ be  a compact planar domain with Lipschitz boundary. The Robin eigenvalue problem on $\Omega$ is to solve the eigenvalue equation $-\Delta f=\lambda f$ with boundary conditions
\begin{equation*}\label{robin bdry cond}
\frac{\partial f}{\partial n}(x)+\sigma f(x)=0,\quad x\in \partial \Omega
\end{equation*}
where $ \frac{\partial f}{\partial n} $ is the derivative in the direction of the outward pointing normal to $\partial \Omega$, and $\sigma>0$.
This boundary condition arises in the study of heat conduction, see e.g. the textbook \cite[Chapter 1]{Strauss}.
Our goal is to study arithmetic properties and statistics of the Robin eigenvalues on a rectangle. For results related to shape optimization for the first two eigenvalues of the Robin Laplacian on a rectangle, see \cite{Laugesen}.

Consider the case of the unit square. For $\sigma=0$, the Neumann eigenvalues on the unit square are explicitly given as $\pi^2( n^2+m^2)$ for integer $n,m\geq 0$. In particular, there are multiplicities coming from the many different ways of writing some of the  integers as a sum of two squares.

For $\sigma\neq 0$ there is no known explicit formula. The problem is however separable, with an orthogonal basis of eigenfunctions of the form $u_{n,m}(x,y) = u_n(x)\cdot u_m(y)$, where $u_n(x)$ are the eigenfunctions of the Laplacian on the unit interval:
$-u_n''=k_n^2 u_n$, satisfying the one-dimensional Robin boundary conditions
\[
-u'(0) +\sigma u(0)=0, \quad u'(1)+\sigma \cdot u(1)=0  .
\]
The frequencies $k_n$ are the unique solutions of the secular equation
\begin{equation}\label{secular eq}
\tan (k_n) =\frac{2\sigma k_n}{k_n^2-\sigma^2}
\end{equation}
in the range $n\pi <k_n<(n+1)\pi$, $n\geq 0$, 
see Lemma \ref{lem:kn sec expand}.
The eigenfunction corresponding to $k_{n}$ is $u_n(x)=k_n\cos(k_nx)+\sigma\sin(k_n x)$.
The Robin eigenvalues on the unit square are
$$\eigen_{n,m}= k_n^2+k_m^2,$$
with eigenfunction $u_n(x)\cdot u_m(y)$, and
admit the symmetry $\eigen_{n,m} = \eigen_{m,n}$. 
For the rectangle $$\Rc_{L}=[0,1]\times [0,L],$$ with the aspect ratio $L>0$,
the Robin energy levels of $\Rc_{L}$ are all numbers
\begin{equation}
\label{eq:lambda_mnL def}
\eigen_{L;n,m}(\sigma)= k_{n}(\sigma)^{2}+\frac{1}{L^{2}}\cdot k_{m}(\sigma\cdot L)^{2} ,\quad  n,m\geq 0.
\end{equation}
Note that if $L\ne 1$, there is no longer the symmetry $(n,m)\mapsto (m,n)$.

\subsection{Multiplicities}

We now consider possible multiplicities in the Robin spectrum of rectangles.
Recall that for the square, and more generally for a rectangle $\Rc_L=[0,1]\times [0,L]$ with $L^2$ rational, the Neumann spectrum has large multiplicities of arithmetic nature, whereas for $L^2$ irrational there are no multiplicities.
Our first goal is to show that for $\sigma>0$ sufficiently small, 
there are no multiplicities in the Robin spectrum of the square beyond the trivial symmetry $\eigen_{1;n,m}=\eigen_{1;m,n}$.

\begin{theorem}
\label{thm:no multiplicities sqr}


There exists $\sigma_{0}>0$ so that for $0<\sigma<\sigma_0$
there are no spectral multiplicities other than the trivial ones $\eigen_{1;n,m}(\sigma)=\eigen_{1;m,n}(\sigma)$.

\end{theorem}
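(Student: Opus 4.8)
The proof rests on the asymptotic expansion of the one‑dimensional Robin frequencies. Write $d_n(\sigma):=k_n(\sigma)^2-(n\pi)^2$ for the one‑dimensional Robin--Neumann gap. From the secular equation \eqref{secular eq} (cf.\ Lemma~\ref{lem:kn sec expand}) one extracts, for $n\ge 1$ and $\sigma$ near $0$, an expansion of the form
\[
 d_n(\sigma)=4\sigma+\frac{P(\sigma)}{n^2}+\frac{S_n(\sigma)}{n^4},
\]
where $P$ is independent of $n$ with $P(\sigma)=-\tfrac{4}{\pi^2}\sigma^2+O(\sigma^3)$, and $|S_n(\sigma)|\le C\sigma^3$ uniformly in $n$; moreover $d_0(\sigma)=2\sigma+O(\sigma^2)$, and each $k_n(\sigma)^2$ is real‑analytic in $\sigma$ (implicit function theorem on \eqref{secular eq}). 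In particular $\eigen_{1;p,q}(\sigma)=k_p(\sigma)^2+k_q(\sigma)^2=\pi^2(p^2+q^2)+c_{p,q}\sigma+O(\sigma^2)$ uniformly, where $c_{p,q}\in\{4,6,8\}$ is $8$ minus twice the number of vanishing indices among $p,q$.

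Suppose $\eigen_{1;n,m}(\sigma)=\eigen_{1;n',m'}(\sigma)$ for some small $\sigma>0$ with $\{n,m\}\neq\{n',m'\}$; relabelling, we may assume $n\le m$, $n'\le m'$, $n<n'$, hence $m>m'$. Since the $\sigma$‑dependent parts of the two expansions are $O(\sigma)$ uniformly, the integer $(n^2+m^2)-(n'^2+m'^2)$ must vanish once $\sigma<\sigma_1$; set $N:=n^2+m^2=n'^2+m'^2$. Comparing now the coefficients $c_{p,q}$ (again using the $O(\sigma^2)$ remainder together with $\sigma$ small) forces the two pairs to contain the same number of zero indices. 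If that number is positive, both pairs are of the form $(0,p)$, $p\ge 0$, which gives a single eigenvalue up to the trivial symmetry, and $\eigen_{1;0,p}(\sigma)=\eigen_{1;0,p'}(\sigma)$ with $p^2=p'^2$ forces $p=p'$; no new coincidence arises. We may thus assume $n,m,n',m'\ge 1$.

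In that case the coincidence becomes $d_n+d_m-d_{n'}-d_{m'}=0$, i.e.
\[
 P(\sigma)\,\delta+\Bigl(\frac{S_n(\sigma)}{n^4}+\frac{S_m(\sigma)}{m^4}-\frac{S_{n'}(\sigma)}{n'^4}-\frac{S_{m'}(\sigma)}{m'^4}\Bigr)=0,\qquad \delta:=\frac1{n^2}+\frac1{m^2}-\frac1{n'^2}-\frac1{m'^2}.
\]
An elementary computation gives $\delta=\dfrac{N\,(n'^2m'^2-n^2m^2)}{n^2m^2n'^2m'^2}=\dfrac{(n'^2-n^2)\,N\,(m^2-n'^2)}{n^2n'^2m^2m'^2}$; since $n'^2m'^2=n^2m^2$ together with $n^2+m^2=n'^2+m'^2$ would force $\{n^2,m^2\}=\{n'^2,m'^2\}$, the numerator is a nonzero integer, so $\delta\neq 0$. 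As $|P(\sigma)|\asymp\sigma^2$ while the bracketed error is $O\bigl(\sigma^3(n^{-4}+m^{-4}+n'^{-4}+m'^{-4})\bigr)=O(\sigma^3/n^4)$, the identity forces
\[
 Q:=n^4|\delta|=\frac{n^2(n'^2-n^2)\,N\,(m^2-n'^2)}{n'^2m^2m'^2}\le C'\sigma .
\]
So the proof reduces to the uniform Diophantine lower bound $Q\ge c_0$ for an absolute constant $c_0>0$, which contradicts the above once $\sigma_0<c_0/C'$.

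Establishing $Q\ge c_0$ is the main obstacle: one must show the remainder terms in the spectral expansion can never conspire into a coincidence, uniformly over the energy $N$. The decisive structural input is $n'^2-n^2=m^2-m'^2$ (hence $n'^2-n^2\ge 2m'+1$), together with $m^2-n'^2\ge 2n'+1$ (since $m\ge n'+1$). Distinguish three cases by the size of the smallest index $n$. If $n\ge\sqrt N/4$, then using $n'm'm^2\le m'^2m^2<m^4<N^2$ one gets $Q>\dfrac{4n^2N}{n'm'm^2}>\dfrac{4n^2}{N}\ge 1/4$. If $n<\sqrt N/4$ and $n'\ge 2n$, then $\tfrac1{n'^2},\tfrac1{m'^2}\le\tfrac1{4n^2}$ give $\delta\ge\tfrac1{2n^2}$, so $Q=n^4\delta\ge\tfrac12$. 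Finally, if $n<\sqrt N/4$ and $n<n'<2n$, then $n'^2<4n^2$, $m^2<N$, $m'^2<N$, and $m^2-n'^2>\tfrac{15}{16}N-\tfrac14N=\tfrac{11}{16}N$, whence $Q>\dfrac{n^2(n'^2-n^2)N\cdot\frac{11}{16}N}{4n^2\cdot N\cdot N}=\dfrac{11}{64}(n'^2-n^2)\ge\dfrac{33}{64}$. Thus $Q\ge c_0:=\tfrac14$ in all cases, completing the proof. All the steps preceding this Diophantine estimate — the reduction to equal sums of squares, the handling of zero indices, and the passage to $Q\le C'\sigma$ — are routine; the combinatorial estimate above is where the real work lies.
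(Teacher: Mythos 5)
Your proof is correct and follows the paper's overall strategy: expand the one-dimensional Robin--Neumann gaps $d_n(\sigma)$ in $\sigma$, reduce the problem to pairs of lattice points on the same circle with all coordinates $\geq 1$, and show that a coincidence forces the quantity $n^4\,\delta$ (with $n=\min(n,m,n',m')$ and $\delta=\tfrac1{n^2}+\tfrac1{m^2}-\tfrac1{n'^2}-\tfrac1{m'^2}$) to be both $O(\sigma)$ and bounded below by an absolute constant. This last estimate is exactly the content of the paper's Proposition~\ref{prop:error majorized}(b), i.e.\ $\tfrac1{n^4}+\tfrac1{m^4}+\tfrac1{n'^4}+\tfrac1{m'^4}\ll\delta$.

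Where you genuinely diverge from the paper is in the proof of that combinatorial lower bound. The paper uses the factorization $\delta=\frac{K(nm+n'm')(n'm'-nm)}{n^2m^2n'^2m'^2}$ and, when $n$ is small, applies the mean value theorem to $g(y)=y\sqrt{1-y^2}$ to get the non-trivial estimate $n'm'-nm\gg\sqrt K\,(n'-n)$. You instead use the algebraic identity $n'^2m'^2-n^2m^2=(n'^2-n^2)(m^2-n'^2)=(m^2-m'^2)(m^2-n'^2)$, which factors into four linear terms, and then run a three-case elementary argument ($n\geq\sqrt N/4$; $n<\sqrt N/4$, $n'\geq 2n$; $n<\sqrt N/4$, $n'<2n$). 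This avoids the MVT entirely and is arguably a shade more elementary; the paper's argument buys a cleaner presentation of the key inequality $n'm'-nm\gg\sqrt K(n'-n)$ at the cost of invoking calculus. One small remark: the first inequality in your Case~1, $Q>\tfrac{4n^2N}{n'm'm^2}$, is not immediate from the trivial bound $(n'^2-n^2)(m^2-n'^2)\geq 1$; it rests on the observation $(m^2-m'^2)(m^2-n'^2)=(m-m')(m+m')(m-n')(m+n')>4n'm'$ (valid since $m>m'\geq n'$), and spelling this out would strengthen the exposition. Otherwise the argument is sound, including the treatment of the axis cases via the first-order coefficients $c_{p,q}\in\{4,6,8\}$, which parallels the paper's Proposition~\ref{prop:no mult n=0}.
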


In the proof of Theorem \ref{thm:no multiplicities sqr} (see section \ref{sec:no mult proof}) we shall see that as $\sigma$ varies, the eigenvalues $\eigen_{1;n,m}(\sigma)$ evolve at different rates, depending on $n,m$.
These discrepancies are sufficiently large to break the degeneracies of the Neumann case ($\sigma=0$) for $\sigma>0$ sufficiently small.

One should compare the statement of Theorem \ref{thm:no multiplicities sqr} asserting that, for $\sigma>0$ sufficiently small, the Robin spectrum of the square is non-degenerate, to the recent result \cite{RW Quebec} asserting that the Robin spectrum of the hemisphere is non-degenerate for every $\sigma>0$. On the other hand, the Robin spectrum of the square does admit nontrivial spectral degeneracies
for sufficiently large $\sigma$ (see Proposition~\ref{prop:mult (3,4),(1,5)}).

\vspace{2mm}

Next, we consider the rectangle $\Rc_{L}$ with $L^2$ irrational. Unlike the square, here there exist multiplicities even for small $\sigma$:

\begin{theorem}\label{thm:no multiplicities rectangle}
If $L^2$ is irrational, then there are arbitrarily small $\sigma>0$ for which there are multiplicities in the Robin spectrum of the rectangle
$\Rc_L$.
\end{theorem}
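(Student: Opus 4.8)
The plan is to exhibit multiplicities coming from the trivial symmetry of the \emph{sub-square} $\Rc_1=[0,1]^2$ being forced into the spectrum of $\Rc_L$ for suitable small $\sigma$. Concretely, I want to find, for a sequence of $\sigma\to 0^+$, two distinct index pairs realizing the same Robin energy level of $\Rc_L$. Write $F(\sigma):=k_n(\sigma)^2$ for the relevant one-dimensional Robin frequencies, so that
\[
\eigen_{L;n,m}(\sigma)=k_n(\sigma)^2+\tfrac{1}{L^2}k_m(\sigma L)^2 .
\]
By Lemma~\ref{lem:kn sec expand} (the secular-equation expansion referenced in the excerpt), each $k_n(\sigma)$ has an asymptotic expansion as $\sigma\to 0$ of the form $k_n(\sigma)=n\pi + c_n\sigma + O(\sigma^2)$ for $n\ge 1$, with $c_n=2/(n\pi)$ coming from linearizing \eqref{secular eq}, and $k_0(\sigma)=\sqrt{2\sigma}+O(\sigma^{3/2})$ is the exceptional small eigenvalue. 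Thus $k_n(\sigma)^2 = n^2\pi^2 + 4\sigma + O(\sigma^2)$ for every fixed $n\ge 1$: the leading correction is \emph{independent of $n$}. This is the crucial mechanism — it means that a Neumann multiplicity $n_1^2+m_1^2/L^2 = n_2^2 + m_2^2/L^2$ among \emph{nonzero} indices is preserved to first order in $\sigma$, and one only needs to play off the second-order terms, or else use the anomalous index $0$.

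The cleanest route avoids needing Neumann multiplicities at all (there are none when $L^2$ is irrational) and instead uses an intermediate-value argument. Fix indices and consider the single-variable analytic function
\[
D(\sigma)\;=\;\eigen_{L;n_1,m_1}(\sigma)-\eigen_{L;n_2,m_2}(\sigma)
\;=\;\bigl(k_{n_1}(\sigma)^2-k_{n_2}(\sigma)^2\bigr)+\tfrac{1}{L^2}\bigl(k_{m_1}(\sigma L)^2-k_{m_2}(\sigma L)^2\bigr).
\]
I will choose the four indices so that $D(\sigma)$ is forced to change sign on an interval $(0,\sigma_1)$ with $\sigma_1$ arbitrarily small; by continuity it then has a zero there, giving a multiplicity. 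To make $D$ change sign near $0$, pick one pair involving the anomalous mode $k_0$. For instance take $n_1=0$ and compare $\eigen_{L;0,m}(\sigma)=k_0(\sigma)^2+\tfrac1{L^2}k_m(\sigma L)^2$ against $\eigen_{L;n,0}(\sigma)=k_n(\sigma)^2+\tfrac1{L^2}k_0(\sigma L)^2$. As $\sigma\to 0^+$, the first tends to $m^2\pi^2/L^2$ and the second to $n^2\pi^2$, so by density of $\{m^2\pi^2/L^2\}$ in the reals (or rather, by choosing $n,m$ so that $n^2\pi^2$ and $m^2\pi^2/L^2$ bracket a common value and then letting the indices grow) one can straddle a target. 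More robustly: since $L^2$ is irrational, the set $\{n^2 - m^2/L^2 : n,m\ge 1\}$ is dense in $\R$ (equidistribution / three-distance type argument), so there are pairs with $|n^2\pi^2 - m^2\pi^2/L^2|$ arbitrarily small but nonzero; for such a pair the function $k_n(\sigma)^2 - \tfrac1{L^2}k_m(\sigma L)^2 = \pi^2(n^2-m^2/L^2) + 4\sigma - \tfrac{1}{L^2}\cdot 4\sigma L + O(\sigma^2) = \pi^2(n^2-m^2/L^2) + 4\sigma(1-1/L)+O(\sigma^2)$ takes a prescribed small value with a definite sign of $\sigma$-derivative, which lets me run the intermediate value theorem on a difference of two such expressions and catch a crossing at some small $\sigma>0$.

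The main obstacle is ensuring the sign change is genuine rather than an artifact of error terms, and that the crossing is at a positive $\sigma$ that can be taken arbitrarily small. This requires (i) making the $O(\sigma^2)$ remainders in Lemma~\ref{lem:kn sec expand} uniform enough in the index $n$ over the finite range of indices actually used, and (ii) checking that the purported crossing is not the trivial symmetry — but there is no trivial symmetry for $L\ne 1$, so any coincidence of two distinct pairs is a genuine multiplicity, which removes that worry entirely. I would organize the argument as: (1) record the expansions $k_n(\sigma)^2 = n^2\pi^2 + 4\sigma + O(\sigma^2)$ and $k_0(\sigma)^2 = 2\sigma + O(\sigma^{3/2})$ from Lemma~\ref{lem:kn sec expand}; (2) use irrationality of $L^2$ to produce, for every $\varepsilon>0$, index pairs whose Neumann levels lie within $\varepsilon$ of each other but are unequal; (3) form the analytic difference $D(\sigma)$ of two appropriately chosen levels, show $D(0^+)$ and $D$ at some small $\sigma_1$ have opposite signs using the explicit first-order terms; (4) invoke continuity to get a zero $\sigma_*\in(0,\sigma_1)$, and let $\varepsilon\to 0$ to push $\sigma_*\to 0$.
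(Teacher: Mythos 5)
Your central arithmetic claim --- that $\{n^2 - m^2/L^2 : n,m \geq 1\}$ is dense near $0$, equivalently that the binary indefinite form $L^2 n^2 - m^2$ takes arbitrarily small nonzero values on positive integers --- is false in general, and this breaks the proof. Factoring $L^2 n^2 - m^2 = (Ln-m)(Ln+m)$, if $L$ is badly approximable (e.g.\ a quadratic irrational, so that $L^2$ is still irrational) then $|Ln-m| \geq c/n$, hence $|L^2 n^2 - m^2| \geq c(Ln+m)/n \gg 1$ uniformly. So the $(0,m)$-versus-$(n,0)$ comparison you set up never produces nearly-coincident Neumann levels; indeed Theorem~\ref{thm:L diopant no mult}(c) shows that this very type of degeneracy is impossible for small $\sigma$ when $L$ is badly approximable. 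Your alternate route --- compare two interior pairs $(n_1,m_1)$, $(n_2,m_2)$ with all coordinates $\ge 1$ --- has the opposite defect: those Neumann levels \emph{can} be made arbitrarily close, but the first-order drift $\eigen_{L;n,m}'(0)=4(1+1/L)$ is the same for every $(n,m)$ with $n,m\ge 1$, so $D'(0)=0$ and the IVT has no linear term to push against; the $O(\sigma^2)$ corrections scale like $1/n^2+1/m^2$ and vanish as the indices grow, which is exactly the regime you are forced into.

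The paper resolves both issues at once by comparing an interior level $\eigen_{L;n,m}$ (with $n,m\ge 1$) against an \emph{axis} level $\eigen_{L;0,m'}$. This choice delivers a nonzero, index-independent drift --- from the expansions $\eigen_{L;n,m}'(0)=4+4/L$ and $\eigen_{L;0,m'}'(0)=2+4/L$ one gets $D'(0)=2$ --- and replaces your binary form by the ternary form $L^2 n^2+m^2-m'^2$. The extra freedom $m'^2-m^2$, which ranges over all multiples of $4$ via $(j+1)^2-(j-1)^2=4j$, is precisely what rescues the Diophantine step: Hardy--Littlewood density of $\{L^2 n_1^2 \bmod 1\}$ gives $-\epsilon/4 < L^2 n_1^2 - j < 0$, and multiplying through by $4$ converts the integer $4j$ into a legal difference of squares, yielding $-\epsilon < L^2 n^2 + m^2 - m'^2 < 0$ with $n,m,m'\ge 1$ for every irrational $L^2$ (Lemma~\ref{lem:Oppenheim}). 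After that, your intermediate-value mechanism runs as you intended. So the analytic framework of your proposal is sound, but the arithmetic input is the wrong one and must be replaced by the ternary-form lemma.
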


The proof of Theorem~\ref{thm:no multiplicities rectangle} involves some arithmetic, in particular, in showing
that the set of values attained by the indefinite ternary quadratic form
$$ Q(x,y,z) =   L^2 x^{2} +y^{2}-z^{2}
$$
at integer values of $(x,y,z)$ intersects every neighbourhood of the origin: $-\epsilon <Q(n,m,m')<0$ with all variables nonzero integers. This is a variation on the Oppenheim conjecture (proved by Margulis \cite{Margulis}), which turns out to admit a simple solution using only the density of the fractional parts of $L^2 n^2 \bmod 1$, due to Hardy and Littlewood \cite{HL}.

\vspace{2mm}

We next show that in some special cases we can give an upper bound for the multiplicities, and for the number of eigenvalues that are not simple.  Let $\lambda_1(\sigma)\leq \lambda_2(\sigma)\leq \dots $ be the ordering (with multiplicities) of the Robin eigenvalues of $\Rc_{L}$. By
Weyl's law, the number of eigenvalues  of size at most $\lambda$ is asymptotically
\begin{equation}
\label{eq:N spec func def}
N(\lambda)=N_{L;\sigma}(\lambda):=\#\{\lambda_j(\sigma)\leq \lambda\} \sim \frac{\area(\Rc_L)}{4\pi} \lambda, \quad \lambda\to \infty .
\end{equation}
Denote by $N^{\rm mult}(\lambda)$ the number of multiple eigenvalues $\le \lambda$ (again, counting the multiplicities in).

\begin{theorem}
\label{thm:bnded deg bad approx}

If $L^2$ is badly approximable, then there exists $\sigma_{0}>0$ so that for $\sigma<\sigma_0$
all the multiplicities in the Robin spectrum of the rectangle $\Rc_L$ are bounded by $3$, and
\begin{equation}
\label{eq:Nmult << sqrt(lambda)}
N^{\rm mult}(\lambda) \ll \sqrt{\lambda}.
\end{equation}
If, in addition, $L$ is badly approximable then the multiplicities are bounded by $2$.
\end{theorem}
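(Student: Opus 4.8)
The plan rests on the perturbative expansion of the one--dimensional frequencies. From Lemma~\ref{lem:kn sec expand} (equivalently, from the identity $k_n(\sigma)=n\pi+2\arctan(\sigma/k_n(\sigma))$ obtained from the secular equation via the tangent double--angle formula) one has, uniformly in $n$, that $k_n(\sigma)^2=n^2\pi^2+\beta_n(\sigma)$ with $0<4\sigma-\beta_n(\sigma)\ll\sigma^2/n^2$ for $n\ge1$ and $\beta_0(\sigma)=2\sigma+O(\sigma^2)$. Writing the Robin--Neumann gap $d_{n,m}(\sigma)=\beta_n(\sigma)+L^{-2}\beta_m(\sigma L)$, the decisive point is that for $n,m\ge1$ this gap equals $4\sigma(1+L^{-1})+O(\sigma^2)$, the \emph{same} leading term for all such pairs, so that in a coincidence $\eigen_{L;n,m}(\sigma)=\eigen_{L;n',m'}(\sigma)$ with all four indices $\ge1$ the leading terms cancel and one is left with
\[
\pi^2(n^2-n'^2)+\frac{\pi^2}{L^2}(m^2-m'^2)=O_L(\sigma^2),\qquad\text{equivalently}\qquad \big|L^2(n^2-n'^2)+(m^2-m'^2)\big|=O_L(\sigma^2);
\]
if one index vanishes the right--hand side is instead within $O_L(\sigma^2)$ of one of finitely many fixed nonzero multiples of $\sigma$, depending on which of the regimes $(\ge1,\ge1),(0,\ge1),(\ge1,0),(0,0)$ the two pairs lie in.

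First I would clear away the degenerate coincidences. If $n=n'$ then $m\neq m'$ forces $|m^2-m'^2|\ge1$, so $|\eigen_{L;n,m}(\sigma)-\eigen_{L;n',m'}(\sigma)|\ge\pi^2/L^2-O_L(\sigma)>0$ once $\sigma<\sigma_0$; symmetrically for $m=m'$. Hence every coincidence has $n\neq n'$ and $m\neq m'$. Put $a=n^2-n'^2\neq0$. Badly approximability of $L^2$ gives $\|L^2a\|\gg_L|a|^{-1}$, which with $\|L^2a\|=O_L(\sigma^2)$ forces $|a|\gg_L\sigma^{-2}$; since $|a|\le\lambda/\pi^2$ there are no multiplicities with all indices $\ge1$ below energy $\asymp_L\sigma^{-2}$. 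Moreover, tracking the size of the correction one gets $\|L^2a\|\ll_L\sigma^2/\mu^2$, where $\mu$ is the least of the indices actually involved, hence $|a|\gg_L\mu^2\sigma^{-2}$ and therefore $\mu\ll_L\sigma\sqrt\lambda$ at energy $\lambda$: a representation entering a coincidence at energy $\lambda$ has one of its two indices $\ll_L\sigma\sqrt\lambda$.

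For the multiplicity bound I would classify the representations of a fixed eigenvalue by which of $n,m$ vanish. There is at most one of type $(0,m)$, $m\ge1$, and at most one of type $(n,0)$, $n\ge1$ (the vanishing index is fixed and the other is then determined by injectivity of $t\mapsto k_t(\cdot)^2$), while a representation of type $(0,0)$ is incompatible with any other (it would force $k_m(\sigma L)=k_0(\sigma L)$) and so is always simple. Thus it suffices to show that for $\sigma<\sigma_0$ every eigenvalue has at most one representation $(n,m)$ with $n,m\ge1$. Two such representations of the same $\lambda$ satisfy, besides the bound above, the exact identity
\[
L^2a-b \;=\; \frac{L^2}{\pi^2}\Big(\gamma_{n'}(\sigma)-\gamma_n(\sigma)+\tfrac1{L^2}\big(\gamma_{m'}(\sigma L)-\gamma_m(\sigma L)\big)\Big),\qquad \gamma_n(\sigma):=\beta_n(\sigma)-4\sigma=-\frac{4\sigma^2}{\pi^2n^2}\big(1+O(\sigma)\big),
\]
with $b=m'^2-m^2$; thus $a=n^2-n'^2$, a difference of two squares bounded by $\lambda/\pi^2$ and constrained to the $\asymp_L\sigma^2$--dense set $\{a:\|L^2a\|\ll_L\sigma^2\}$ (which, $L^2$ being badly approximable, is essentially generated by a single continued--fraction denominator of size $\asymp\sigma^{-2}$), is subject to the further constraints that $\operatorname{round}(L^2a)$ be a difference of two squares and that the identity above hold. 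Analysing the factorisations $a=uv$ and $\operatorname{round}(L^2a)=u'v'$, and using that $nn'$ and $mm'$ are strictly monotone in the chosen divisor, pins the representation down uniquely, giving multiplicity $\le3$. When $L$ itself is badly approximable one further excludes the coincidence of a representation of type $(0,m)$ with one of type $(n,0)$: there the identity reads $(m-Ln)(m+Ln)=O_L(\sigma)$, whence $\|Ln\|\ll_L\sigma/n$, contradicting $\|Ln\|\gg_L 1/n$; consequently at most two of the three types $(\ge1,\ge1),(0,\ge1),(\ge1,0)$ occur, so the multiplicity is $\le2$.

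Finally, for $N^{\rm mult}(\lambda)\ll\sqrt\lambda$: each multiple eigenvalue $\le\lambda$ carries a coincidence, hence an integer $a$ with $1\le|a|\le\lambda/\pi^2$ and $\|L^2a\|\ll_L\sigma^2$ such that both $a$ and $\operatorname{round}(L^2a)$ are differences of two squares realised by indices $\le\sqrt\lambda\max(1,L)/\pi$, with --- by the second paragraph --- one of these indices $\ll_L\sigma\sqrt\lambda$; the near--square shape this imposes on $\operatorname{round}(L^2a)$ (a product of two factors differing by $\ll_L\sigma\sqrt\lambda$), together with the sparsity of admissible $a$ and the exact identity reducing the choice of divisors of $a$ essentially to one, collapses the count to $\ll_L\sqrt\lambda$. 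I expect the genuine obstacle to be precisely this last piece of bookkeeping: extracting the sharp exponent $1/2$ --- and the exact constants $2$ and $3$ --- from the interplay of the difference--of--squares structure, the arithmetic--progression shape of $\{a:\|L^2a\|\ll\sigma^2\}$, and the refined asymptotics of $\beta_n$, rather than from any isolated conceptual step.
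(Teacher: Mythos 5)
The paper's argument for Theorem~\ref{thm:bnded deg bad approx} rests on one slick observation that your proposal misses: if two pairs $(n,m)\ne(n',m')$ with all indices $\geq 1$ have $|\eigen_{L;n,m}(0)-\eigen_{L;n',m'}(0)|<1$, then writing $a=n^2-n'^2$ one automatically has $|a|\ll|m^2-m'^2|$ (otherwise the Neumann difference is $\geq1$), and hence
\[
\frac{1}{|a|}\gg\frac{1}{n^2}+\frac{1}{n'^2}+\frac{1}{m^2}+\frac{1}{m'^2}.
\]
This bounds the $\sigma^2$-correction term by $\ll\sigma^2/|a|$, which is dominated by the Neumann gap $\gg1/|a|$ supplied by bad approximability of $L^2$, \emph{uniformly} in $(n,m,n',m')$, giving no interior coincidences for all $\sigma<\sigma_0$ and all energies at once. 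Your proposal instead bounds the correction by $\ll\sigma^2/\mu^2$ with $\mu$ the smallest index, which only yields $|a|\gg\mu^2/\sigma^2$: this does not contradict $\|L^2a\|\gg 1/|a|$, since $|a|$ can be arbitrarily large, and therefore only rules out interior coincidences at energies $\ll\sigma^{-2}$, not all energies. The divisor/factorisation argument you then sketch to cover the remaining range is not carried out, and you yourself flag it as the genuine obstacle; so the central claim that every eigenvalue has at most one representation with both indices $\geq1$ is not established in your write-up.

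The pieces that are correct and match the paper: ruling out $n=n'$ (or $m=m'$) for small $\sigma$, classifying the remaining representations by which indices vanish (at most one of type $(0,m')$, at most one of type $(n',0)$, $(0,0)$ isolated) to get multiplicity $\leq3$, and using bad approximability of $L$ itself to exclude a simultaneous axis-axis coincidence $m'^2-L^2n'^2=O(\sigma)$, reducing to multiplicity $\leq2$. Your final paragraph on $N^{\rm mult}(\lambda)\ll\sqrt\lambda$ is, however, overcomplicated and not closed: once the interior case is excluded, every multiple eigenvalue must coincide with an axis eigenvalue $\eigen_{L;n',0}$ or $\eigen_{L;0,m'}$, and the number of axis eigenvalues $\leq\lambda$ is $O(\sqrt\lambda)$ since $k_n=n\pi+O(1)$; the bound follows immediately without any sparsity or near-square analysis of $\operatorname{round}(L^2a)$.
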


Recall that a number $\theta$ is ``badly approximable'' if there is some $c=c(\theta)>0$ so that for all integer $p,q\in \Z$ with $q\geq 1$, we have
\begin{equation}
\label{eq:badly approx def}
\left|\theta-\frac{p}{q}\right|\geq \frac{c}{q^2}.
\end{equation}
For instance, quadratic irrationalities are badly approximable.

\subsection{Robin-Neumann gaps}

We next turn to the differences between the Robin and Neumann eigenvalues,
or simply RN gaps, introduced in ~\cite{RWY}. Let $\lambda_1(\sigma)=\lambda_{1}^{L}(\sigma)\leq \lambda_2(\sigma)=\lambda_{2}^{L}(\sigma)\leq \dots $ be
the ordering (with multiplicities) of the Robin eigenvalues of the rectangle $\Rc_{L}$.
For example, for $\sigma=0$ we recover the Neumann eigenvalues, and $\lambda_j(\infty)$ are the Dirichlet eigenvalues.
The RN gaps are the nonnegative numbers $$d_{j}=d_{j}^{L}(\sigma):=\lambda_{j}^{L}(\sigma)-\lambda_{j}^{L}(0).$$

For every bounded domain $\Omega$ with piecewise smooth boundary, it was shown in ~\cite[Theorem 1.1]{RWY} that for $\sigma>0$, there is a limiting  mean RN gap, asymptotic to
\begin{equation}
\label{eq:d(sigma) def}
\overline{d}(\sigma):= \lim\limits_{N\rightarrow\infty}\frac{1}{N}\sum\limits_{j=1}^{N}d_{j} (\sigma) =  \frac{2\length(\partial\Omega)}{\area(\Omega)} \cdot \sigma.
\end{equation}
Concerning the individual RN gaps, \cite{RWY} gave a uniform lower bound for arbitrary star-shaped domains with smooth boundary.
For an upper bound, they proved that $$d_{j}(\sigma)\le C_{\Omega}(\lambda_{j}(\infty))^{1/3}\cdot \sigma,$$ valid for any $\Omega$ with smooth boundary. This could be compared to the bound ~\cite[Theorem 2]{Fl} $$0\le d_{j}(\infty)-d_{j}(\sigma) \le C \sigma^{-1/2}\lambda_{j}(\infty)^{2}$$ with $C>0$ absolute, for the distance between the Robin eigenvalues and the corresponding Dirichlet eigenvalue, in the regime $\sigma\rightarrow+\infty$, also assuming that $\Omega$ has a smooth boundary.

For the rectangle ~\cite[Theorem 1.3, Theorem 1.7]{RWY} gave the more precise upper bound
\begin{equation}
\label{eq:RN gaps bounds RWY}
d_{j}^{L}(\sigma) \le C_{L,\sigma},
\end{equation}
for some constant $C_{L,\sigma}>0$.
Here we give uniform upper and lower bounds for the rectangle in terms of the mean gap $\overline{d}(\sigma)$, in particular refining the upper bound \eqref{eq:RN gaps bounds RWY}:

\begin{theorem}
\label{thm:RN gaps bnded}

There exist absolute constants $C>c>0$, so that for every rectangle, for all $\sigma >0$ and $j\ge 1$,
\begin{equation}
\label{eq:dn<Csig}
d_{j}(\sigma) \le  C\cdot \overline{d}(\sigma),
\end{equation}
and for all $\sigma\in (0,1]$,
\begin{equation}
\label{eq:dn>csig}
d_{j}(\sigma) \ge  c\cdot  \overline{d}(\sigma) .
\end{equation}

\end{theorem}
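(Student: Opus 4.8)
The plan is to reduce everything to a one-dimensional analysis of the Robin frequencies $k_n(\sigma)$ on the unit interval. The key structural fact is that the RN gap for the eigenvalue $\eigen_{L;n,m}(\sigma)$ is
\[
d(\sigma) = \bigl(k_n(\sigma)^2 - (n\pi)^2\bigr) + \frac{1}{L^2}\bigl(k_m(\sigma L)^2 - (m\pi)^2\bigr),
\]
since the Neumann frequencies on $[0,1]$ are exactly $n\pi$. So if I write $g_n(\sigma):= k_n(\sigma)^2-(n\pi)^2$ for the one-dimensional RN gap, every RN gap of the rectangle is a sum $g_n(\sigma) + \tfrac{1}{L^2} g_m(\sigma L)$ of two such terms, and the mean gap is $\overline d(\sigma) = \tfrac{2\length(\partial\Rc_L)}{\area(\Rc_L)}\sigma = 2(1+\tfrac1L)\sigma$. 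Thus Theorem~\ref{thm:RN gaps bnded} will follow immediately from uniform two-sided bounds of the shape $c'\sigma \le g_n(\sigma) \le C'\sigma$ for the one-dimensional gaps (with $c'$ only needed for $\sigma\in(0,1]$, $C'$ for all $\sigma>0$), applied once with parameter $\sigma$ and once with parameter $\sigma L$, after which the $1/L^2$ factor combines correctly with $\length$ and $\area$ to give absolute constants.

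So the core is to bound $g_n(\sigma) = k_n(\sigma)^2 - (n\pi)^2$ where $k_n$ is the unique solution of the secular equation \eqref{secular eq} in $(n\pi,(n+1)\pi)$. First I would handle $n=0$ separately (the ground-state branch, where $k_0\in(0,\pi)$ and $k_0^2\to 2\sigma$ as $\sigma\to 0$, $k_0^2\to \pi^2$ as $\sigma\to\infty$; here $g_0(\sigma)=k_0^2$ behaves like $2\sigma$ for small $\sigma$ and is bounded by $\pi^2\ll\sigma$ for $\sigma\ge 1$, which is fine). For $n\ge 1$, set $k_n = n\pi + \delta_n$ with $\delta_n\in(0,\pi)$; then $g_n = 2n\pi\delta_n + \delta_n^2$. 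From the secular equation, using $\tan(n\pi+\delta_n)=\tan\delta_n$, one gets $\tan\delta_n = \frac{2\sigma k_n}{k_n^2-\sigma^2}$. For the upper bound: since $\tan\delta_n$ and $\delta_n$ have the same sign and $\delta_n<\pi$, and since $\frac{2\sigma k_n}{k_n^2-\sigma^2}$ is, for $k_n$ not too close to $\sigma$, comparable to $2\sigma/k_n$, one shows $\delta_n \ll \min(1,\sigma/(n\pi))$; multiplying by $2n\pi$ gives $g_n \ll \sigma$ uniformly (the regime $k_n\approx\sigma$ needs a direct check: there $n\pi\approx\sigma$, so $g_n\le (2n\pi+\pi)\pi \ll \sigma$ anyway). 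For the lower bound with $\sigma\le 1$: here $k_n\ge\pi>\sigma$ so $k_n^2-\sigma^2>0$ and $\frac{2\sigma k_n}{k_n^2-\sigma^2}\ge \frac{2\sigma}{k_n}\gtrsim \sigma/(n\pi+\pi)$; combined with $\tan\delta_n\le \delta_n\cdot(\text{const})$ on the relevant range (once we know $\delta_n$ is bounded away from $\pi/2$, which the upper bound gives when $\sigma$ is small relative to $n$, and the complementary range is again handled directly) we get $\delta_n\gtrsim \sigma/(n\pi)$, hence $g_n = 2n\pi\delta_n+\delta_n^2\gtrsim\sigma$.

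The main obstacle I anticipate is the uniformity across the full range of $n$ and $\sigma$ simultaneously: the secular equation degenerates exactly when $k_n$ is comparable to $\sigma$ (the denominator $k_n^2-\sigma^2$ vanishes), and separately the naive bound $\tan\delta_n\asymp\delta_n$ fails when $\delta_n$ approaches $\pi/2$. Both bad regimes occur only when $n\pi$ is comparable to $\sigma$, i.e. $n\asymp\sigma/\pi$, and in that regime $g_n$ is trivially of size $\asymp\sigma$ because $0<g_n<(2n\pi+\pi)\pi$ — so the strategy is to carve out a window $n\in[c_1\sigma,c_2\sigma]$ handled by crude trapping, and on the complement use the clean asymptotics $\delta_n\asymp\sigma/(n\pi)$ of \eqref{secular eq}. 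Assembling these cases carefully, and checking that the implied constants genuinely do not depend on $L$ after the change of variables $\sigma\mapsto\sigma L$ (which is automatic since the one-dimensional bounds are scale-free in the parameter), completes the proof. I would also invoke Lemma~\ref{lem:kn sec expand} for the precise location and expansion of $k_n$ to make the $\delta_n\asymp\sigma/(n\pi)$ step rigorous rather than re-deriving it.
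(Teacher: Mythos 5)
Your one-dimensional analysis is on the right track and is essentially the same as the paper's two key lemmas: the paper proves $k_n(\sigma)^2-k_n(0)^2\le C_0\sigma$ for all $\sigma>0$ and $k_n(\sigma)^2-k_n(0)^2\ge c_0\sigma$ for $\sigma\in[0,1]$, uniformly in $n\ge 0$, by going back to the secular equation and separating the ``bad regime'' where $n\pi\asymp\sigma$ (handled by crude trapping in the interval of length $\pi$) from the complementary regime where $\tan\delta_n\asymp\sigma/k_n$ gives the clean bound. Your plan to do this via $k_n=n\pi+\delta_n$ and a case analysis matches that in spirit. (One small inaccuracy: you say ``$\tan\delta_n$ and $\delta_n$ have the same sign and $\delta_n<\pi$''; this is false for $\delta_n\in(\pi/2,\pi)$, which occurs precisely when $\sigma>k_n$. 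It does not derail the argument since you handle that regime by trapping anyway, but the sentence should be repaired.)

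However, there is a genuine gap in the reduction step, and it is exactly the point the paper singles out as ``the main obstacle.'' You assert that ``every RN gap of the rectangle is a sum $g_n(\sigma)+\tfrac1{L^2}g_m(\sigma L)$.'' This is \emph{not} true of the defined quantities $d_j(\sigma)=\lambda_j(\sigma)-\lambda_j(0)$, because $\lambda_j(\sigma)$ and $\lambda_j(0)$ are the $j$-th entries of two \emph{separately sorted} lists, and as $\sigma$ grows the levels $\Lambda_{L;n,m}(\sigma)$ can cross and reorder relative to the Neumann order. So $d_j(\sigma)$ is in general not of the form $\Lambda_{L;n,m}(\sigma)-\Lambda_{L;n,m}(0)$ for a single lattice point $(n,m)$, and the theorem does not ``follow immediately'' from the pointwise 1D bounds. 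What you need is the intermediate step: if all of the \emph{unordered} gaps $\Lambda_{L;n,m}(\sigma)-\Lambda_{L;n,m}(0)$ lie in $[c\,a_L\sigma,\,C\,a_L\sigma]$, then so do all ordered gaps $d_j(\sigma)$. This is true, but it is exactly a counting-function argument: with $t=\lambda_j(0)$, at least $j$ of the $\Lambda_{L;n,m}(0)$ are $\le t$, hence at least $j$ of the $\Lambda_{L;n,m}(\sigma)$ are $\le t+C a_L\sigma$, so $\lambda_j(\sigma)\le t+Ca_L\sigma$; the lower bound goes the same way. The paper makes this explicit. Supply this step and state clearly that you are bounding $\Lambda_{L;n,m}(\sigma)-\Lambda_{L;n,m}(0)$ rather than $d_j(\sigma)$ in the first instance, and your proof closes the gap.

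Two minor points. First, your formula $\overline{d}(\sigma)=2(1+\tfrac1L)\sigma$ is off by a factor of $2$: with $\length(\partial\Rc_L)=2(1+L)$ and $\area(\Rc_L)=L$ one gets $\overline{d}(\sigma)=\tfrac{2\length}{\area}\sigma=4(1+\tfrac1L)\sigma$; harmless here but worth fixing. Second, the lower bound uses $g_m(\sigma L)\gg\sigma L$ with parameter $\sigma L$, which may exceed $1$ even when $\sigma\le 1$; one should either note that one can take $L\le 1$ without loss of generality (swapping the sides), or extend the $1$D lower bound to a range $[0,\sigma_1]$ with $\sigma_1$ absolute and absorb the remaining range trivially.
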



Note that \eqref{eq:dn>csig} can only be valid for
\[
\sigma\leq \frac{1}{c} \frac{\pi^2(L+\frac 1L)}{4(1+L)}.
\]
Indeed,  $\lambda_{1}(0)\le \lambda_{1}(\sigma)\le \lambda_{1}(\infty)$, so that
\[
c\cdot \frac{4(1+L)}{L}\cdot \sigma = c\cdot  \overline{d}(\sigma)\leq d_{1}(\sigma) \le \lambda_{1}(\infty) - \lambda_{1}(0)=\left(1+\frac{1}{L^{2}}\right)\pi^2.
\]


\subsection{Pair-correlation for the Robin energies}
We next turn to study the statistics of the eigenvalues on the scale of their mean spacing.
In our case,
the mean spacing between the eigenvalues is constant
\begin{equation}
\label{eq:sbar mean spac}
\bar s:=\lim\limits_{N \to \infty}\frac 1N \sum_{k\leq N} \Big(\lambda_{k+1}(\sigma)-\lambda_k(\sigma)\Big) \sim \frac{4\pi}{\area \Rc_L}
\end{equation}
by Weyl's law \eqref{eq:N spec func def}.
One popular local statistic is the distribution $P(s)$ of nearest-neighbour gaps $(\lambda_{k+1}(\sigma)-\lambda_k(\sigma))/\bar s$.
For the square (and more generally when $L^2$ is rational),  $P(s)$ is a delta function at the origin \cite{RWY}. However, we expect that if the squared aspect ratio $L^2$ is a Diophantine irrationality, that is there is some $\kappa>0$ so that  $|L^2-p/q|>1/q^\kappa$ for all integers $q> 1$ and $p$, then the nearest neighbour gap distribution will be Poissonian:  $P(s)=e^{-s}$, that is as for uncorrelated levels, cf. \cite{BerryTabor, Marklof2000, RudnickQC}. However at present this quantity is not accessible. A more tractable statistic is the pair correlation function, defined as follows:
For a test function $f\in C_c^\infty(\R)$, we set
\[
R_2^\sigma(f,N) = \frac 1N\sum_{1\leq  k\neq k'\leq N} f\left(\frac{ \lambda_k(\sigma) - \lambda_{k'}(\sigma)}{\bar s}\right) .
\]
The Poisson expectation is that
\begin{equation*}
\label{eq:Poisson exp}
\lim_{N\to \infty} R_2^\sigma(f,N)  =\intinf f(x)dx .
\end{equation*}

\begin{theorem} \label{thm:PC diophantine}
Assume that $L^2$ is a Diophantine irrationality. Then for every fixed $\sigma>0$, the pair correlation function is Poissonian.
\end{theorem}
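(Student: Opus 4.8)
The plan is to make the Robin energies explicit, reduce the pair correlation to a lattice-counting statement for the dilated lattice $\{\pi^2(n^2+m^2/L^2)\}$, and prove the latter by Fourier duality, the Diophantine hypothesis on $L^2$ entering only through the control of the resulting two-dimensional quadratic exponential sums.

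\emph{Step 1: asymptotics and reduction to the Neumann lattice.} From the secular equation \eqref{secular eq}, writing $k_n(\sigma)=n\pi+\delta_n$, one obtains $\delta_n=\tfrac{2\sigma}{n\pi}+O(n^{-3})$, hence $k_n(\sigma)^2=n^2\pi^2+4\sigma+O(n^{-2})$, and therefore
\[
\eigen_{L;n,m}(\sigma)=\pi^2\Big(n^2+\frac{m^2}{L^2}\Big)+4\sigma\Big(1+\frac1L\Big)+O\Big(\frac1{n^2}+\frac1{m^2}\Big).
\]
The additive constant cancels in every difference $\lambda_k-\lambda_{k'}$, and the remaining error is $o(1)$ while the mean spacing $\bar s=4\pi/L$ is fixed and positive. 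I would show that $R_2^\sigma(f,N)$ agrees, up to $o(1)$, with the analogous quantity for the shifted Neumann-type lattice $\{\pi^2(n^2+m^2/L^2):n,m\ge 0\}$: the constant drops out, and by the mean value theorem the $O(n^{-2}+m^{-2})$ perturbation of the individual levels changes the correlation sum by at most $\ll\frac1N\sum(\tfrac1{n^2}+\tfrac1{m^2})$ over the pairs of lattice points at bounded distance, which is $O(N^{-1/2+o(1)})$ once one has the clustering estimate of Step 3; that estimate also disposes of the $O(\sqrt N)$ lattice points near the boundary ellipse $\{\eigen_{L;n,m}(\sigma)\le\lambda_N\}$ affected by the $O(1)$ shift in the cut-off.

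\emph{Step 2: Fourier duality and the main term.} With $\hat f(\xi)=\int_\R f(x)e^{-2\pi i x\xi}\,dx$ and $Q(n,m)=n^2+m^2/L^2$, one has
\[
R_2^\sigma(f,N)=\frac1N\int_\R\hat f(\xi)\big(|S_N(\xi)|^2-N\big)\,d\xi+o(1),\qquad
S_N(\xi)=\sum_{\pi^2 Q(n,m)\le\lambda_N}e^{2\pi i\,\xi\pi^2 Q(n,m)/\bar s}.
\]
On a shrinking neighbourhood $|\xi|\le N^{-1/2}$ of the origin, within which the Weyl remainder for the rectangle (of Gauss-circle type, $\ll N^{1/3}$) does not destroy the coherence of the phases $\xi\lambda_j/\bar s$, the quantity $N^{-1}|S_N(\xi)|^2$ reduces to the Fej\'er kernel $\sin^2(\pi N\xi)/(\pi^2 N\xi^2)$, and this part of the integral equals $\hat f(0)+o(1)=\intinf f(x)\,dx+o(1)$, the Poisson value. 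It then remains to show that the contribution of $|\xi|\ge N^{-1/2}$ is $o(1)$, i.e. that there the off-diagonal part $|S_N(\xi)|^2-N$, integrated against $\hat f$, is $o(N)$.

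\emph{Step 3: the quadratic exponential sum (the main obstacle).} Outside the small neighbourhood of the origin, $S_N(\xi)$ is a two-dimensional quadratic Weyl sum $\sum_{(n,m):\,Q(n,m)\le X}e^{2\pi i(\alpha n^2+\beta m^2)}$ over the quarter-ellipse, with $X\asymp N$ and $\alpha=\tfrac{\pi L}{4}\xi$, $\beta=\tfrac{\pi}{4L}\xi$, so that $\alpha/\beta=L^2$. Summing in $m$ for fixed $n$ and then in $n$, and using the classical bound $|\sum_{m\le M}e^{2\pi i\beta m^2}|\ll M/\sqrt q+\sqrt q\log q$ valid when $|\beta-p/q|\le q^{-2}$, the inner sum is close to its trivial size only if $\beta$ is abnormally close to a rational with small denominator, and likewise the outer sum only if $\alpha$ is; were both to occur for the same $\xi\neq 0$, the number $L^2=\alpha/\beta$ would be approximable, to within a very small error, by a rational whose denominator is the product of the two small ones, which the Diophantine bound $|L^2-p/q|>q^{-\kappa}$ rules out. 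Making this dichotomy uniform in $\xi$ and in $n$ (to accommodate the varying inner length $M_n\asymp\sqrt{X-n^2}$) gives a power-saving bound $S_N(\xi)\ll_{\kappa}N^{1-\eta}$ for $|\xi|\ge N^{-1/2}$, together with the companion clustering estimate invoked above: the number of $(n,m)$ with $\eigen_{L;n,m}(\sigma)$ in any $O(1)$-window is $O(N^{o(1)})$, uniformly. Assembling the three steps yields $R_2^\sigma(f,N)\to\intinf f(x)\,dx$. The crux, and the only place where the Diophantine hypothesis is genuinely used, is this exponential-sum analysis over the ellipse; it is of the same type as the estimates appearing in the study of the pair correlation of values of binary quadratic forms and of the two-dimensional Berry--Tabor problem, from which the needed inputs can be quoted or adapted.
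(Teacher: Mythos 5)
Your Step 1 is in the spirit of the paper's key reduction (Proposition \ref{prop:comparison}): one compares $R_2^\sigma(f,N)$ with the Neumann pair correlation $R_2^0(f,N)$ using the fact that $\Lambda_{L;n,m}(\sigma)-\Lambda_{L;n,m}(0)$ is a constant plus $O(n^{-2}+m^{-2})$. But even here your error analysis leans on a ``clustering estimate'': that the number of $(n,m)$ with $\Lambda_{L;n,m}(0)$ in an $O(1)$-window is $O(N^{o(1)})$. The paper explicitly states it cannot prove this for irrational $L^2$ and instead uses the crude Sierpi\'nski-type bound $O(N^{1/3})$ per annulus, which suffices only after splitting into small and large $\min(n,m)$ and invoking the Bleher--Lebowitz bound $R_2^0(g,N)\ll N^{\varepsilon}$ on the large side. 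So your Step 1, as written, quietly assumes something unproven; the fix is precisely the more careful two-category argument of Proposition \ref{prop:comparison}.

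The more serious gap is in Steps 2--3. There you attempt to prove that the Neumann pair correlation itself is Poissonian by Fourier duality and two-dimensional Weyl sums. This is exactly the theorem of Eskin--Margulis--Mozes \cite{EMM}, which the paper cites as a black box, and it is not accessible by classical exponential-sum estimates. Your dichotomy (``$\alpha$ and $\beta$ cannot both be well-approximable with small denominators'') gives, at best, square-root cancellation in one of the variables, i.e.\ $|S_N(\xi)|\ll \sqrt{N}\,(\log N)^{O(1)}$ on the tail. But then $|S_N(\xi)|^2\asymp N$, which is the \emph{same order} as the diagonal term you are subtracting; you need $|S_N(\xi)|^2-N$ to be $o(N)$ after integrating against $\hat f$ over $|\xi|\ge N^{-1/2}$, and square-root cancellation simply does not deliver this. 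This is the long-standing obstruction in the two-dimensional Berry--Tabor problem: the representation numbers of the binary form $n^2+m^2/L^2$ (equivalently, divisor-type fluctuations) prevent any power saving beyond the trivial $\sqrt N$ by Weyl differencing. The known routes around it are either metric (Sarnak's almost-every $L^2$ argument, which does not cover an arbitrary fixed Diophantine $L^2$) or the homogeneous-dynamics machinery of \cite{EMM}. Your claim that the needed exponential-sum inputs ``can be quoted or adapted'' from the literature is therefore unfounded --- no such inputs exist. The correct completion of the proof, after establishing the comparison of Proposition \ref{prop:comparison}, is to invoke \cite{EMM}.
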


To prove Theorem~\ref{thm:PC diophantine}, we establish a comparison with the pair correlation of the Neumann spectrum (Proposition~\ref{prop:comparison}),
which was shown to be Poissonian in the Diophantine case by Eskin, Margulis and Mozes \cite{EMM}. There are two key ingredients in the comparison argument: A stronger, asymptotic, form of the bound for the RN gaps of Theorem \ref{thm:RN gaps bnded}
(see Proposition \ref{prop:gap constant ae} below),
and a count of lattice points in annular regions, for which it suffices to appeal to a classical remainder term in the lattice point problem.

\vspace{2mm}

It is of interest to investigate analogues of our results for the case when the boundary conditions are non-constant, that is
$$ \frac{\partial f}{\partial n}(x)+\sigma(x) f(x)=0$$ for $x$ on the boundary, where $\sigma(x)>0$ is a continuous function on the boundary. New methods will be required since we make heavy use of the fact that $\sigma$ is constant.


\section{The one-dimensional problem}
\label{sec:1d problem}

In this section we review some classical properties of the Robin eigenvalue problem on an interval, see e.g. \cite[\S 4.3]{Strauss}.

\subsection{The secular equation}
\label{sec:sec equation}

Let $I=[-\frac 12,\frac 12]$ be the unit interval, and consider the Helmholtz equation
\begin{equation}
\label{eq:Helmholts eq}
f''+ k^2 f = 0,
\end{equation}
subject to Robin boundary conditions
\[
\sigma f\left(-\frac 12\right)-f'\left(-\frac 12\right) = \sigma f\left(\frac 12\right)+f'\left(\frac 12\right)=0.
\]
We use the symmetry $x\mapsto -x$, which is respected by both the second derivative operator $f\mapsto f''$ and the boundary conditions, to separate solutions into even and odd symmetry classes. The even solutions of the eigenvalue equation \eqref{eq:Helmholts eq} are
$f(t) = \cos(kt)$, which, inserting into the boundary conditions gives
\[
k\cdot\tan \left(\frac k2\right) = \sigma .
\]
Likewise, the odd solutions of \eqref{eq:Helmholts eq} are $f(t) = \sin(kt)$, and the secular equation is
\[
-k\cdot\cot\left( \frac k2\right) = \sigma .
\]

As we shall see below, the solutions of the even and odd secular equations interlace, and the totality of solutions
$\{k_n(\sigma):n=0,1,2\dots\} $ are the solutions of the combined secular equation
$$\left(k\tan \left(\frac k2\right)-\sigma\right)\cdot \left(k\cot \left(\frac k2\right)+ \sigma\right)=0,$$ that, after some algebra, reads
\begin{equation}
\label{eq:secular gen R}
\tan(k) = \frac{2\sigma k}{k^{2}-\sigma^{2}}.
\end{equation}
We could also deduce the equation \eqref{eq:secular gen R} directly if we ignore the symmetry $x\mapsto –x$.

\subsection{General intervals}

\label{sec:gen int}

Instead of the unit interval we consider an interval of length $L$. The Laplace eigenfunctions $f''+k^2f=0$
on $[-\frac L2,\frac L2]$ are subject to the Robin boundary conditions
\begin{equation*}
\sigma f\left(-\frac L2\right)-f'\left(-\frac L2\right) = \sigma f\left(\frac L2\right)+f'\left(\frac L2\right)=0.
\end{equation*}
We obtain solutions to the Helmholtz equation on $\left[-\frac L2,\frac L2\right]$ by scaling the corresponding solutions on the unit interval: If $g$ on $\left[-\frac 12,\frac 12\right]$ solves
  $ g'' + k^2  g=0$ and $\sigma g\left(-\frac 12\right)-g'\left(-\frac 12\right)=\sigma g\left(\frac 12\right)+g'\left(\frac 12\right)=0$,
then $f_L(t)=g(t /L)$ on $\left[-\frac L2,\frac L2\right]$ satisfies
$$
f'' + \left(\frac kL\right)^2  f=0, \quad \frac \sigma L f_L\left(-\frac L2\right)-f_L'\left(-\frac L2\right)=0=\frac \sigma L f_L\left(\frac L2\right)+f_L'\left(\frac L2\right).
$$
Hence if we define $k_{L;n}(\sigma):= \frac{1}{L} k_{n}(\sigma\cdot L)$,
then the Robin energy levels on $[0,L]$ are
$$ k_{L;n}(\sigma)^{2} = \frac{1}{L^{2}} (k_{n}(\sigma\cdot L))^{2} ,\quad n\ge 0 .
$$
Note that the secular equation on $\left[-\frac L2,\frac L2\right]$ becomes
\begin{equation*}
\tan(Lk) = \frac{2\sigma k}{k^{2}-\sigma^{2}}.
\end{equation*}

\subsection{Properties of $k_{n}(\sigma)$}
\label{sec:kn(R) analytic}

\begin{lemma}
\label{lem:kn sec expand}

For every $n\ge 0$ and $\sigma\ge 0$ there is a unique solution $k_{n}(\sigma)$ to
the secular equation \eqref{eq:secular gen R} in the range $k_{n}(\sigma)\in  [n\pi ,(n+1)\pi]$.
The functions $\sigma\mapsto k_{n}$ satisfy:

\begin{enumerate}[a.]

\item For all $n\ge 0$, $k_{n}(\cdot)$ are strictly increasing everywhere on $[0,+\infty)$, with
\begin{equation}
\label{eq:kn(0)=pi n}
k_{n}(0) = n\cdot \pi,
\end{equation}
and further
\begin{equation*}
\lim\limits_{\sigma\rightarrow\infty}k_{n}(\sigma)= (n+1)\cdot \pi.
\end{equation*}

\item For $n\ge 1$, the function $\sigma\mapsto k_{n}(\sigma)$ is analytic
everywhere.
Further, for $\sigma< (n+1/2)\pi$, $k_{n}(\sigma) \in [n \pi,(n+1/2)\pi]$, and for $\sigma\ge (n+1/2)\pi$,
$k_{n}(\sigma) \in [(n+1/2)\pi, (n+1)\pi]$. Moreover, $k_{n}(\sigma) = (n+1/2)\pi$ if and only if $\sigma = (n+1/2)\pi$.

\item The function $k_{0}(\cdot)$ is analytic everywhere except
at $(\sigma,k_{0})=(0,0)$.
Further, for $\sigma\in (0,\pi/2)$, $k_{0}(\sigma)>\sigma$, and
\begin{equation}
\label{eq:k0 asympt 2 terms}
k_{0}(\sigma)^{2} = 2\sigma+O(\sigma^{2}).
\end{equation}

\end{enumerate}

\end{lemma}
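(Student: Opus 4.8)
The plan is to treat the combined secular equation $\tan(k) = 2\sigma k/(k^2 - \sigma^2)$ via its two factored halves, the even equation $k\tan(k/2) = \sigma$ and the odd equation $-k\cot(k/2) = \sigma$, since the analytic structure is cleanest there. First I would prove existence and uniqueness in each interval $[n\pi,(n+1)\pi]$: on $(0,\pi)$ the function $k\mapsto k\tan(k/2)$ increases from $0$ to $+\infty$, giving a unique even solution $k_0(\sigma)$; on each $((2j-1)\pi,2j\pi)$ the function $-k\cot(k/2)$ increases from $-\infty$ to $+\infty$ (as a branch), giving one odd solution, and similarly the even equation on the next interval. Collating, and checking the interlacing at the endpoints $n\pi$ (where $\tan(k/2)$ or $\cot(k/2)$ vanish or blow up), yields exactly one solution $k_n(\sigma)$ in each $[n\pi,(n+1)\pi]$, which is the full solution set of the combined equation. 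The value $k_n(0) = n\pi$ is immediate from setting $\sigma = 0$.

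For part (a), monotonicity follows from the implicit function theorem applied to $F(k,\sigma) := \tan(k) (k^2-\sigma^2) - 2\sigma k = 0$ (or better, to the relevant factor): I would compute $\partial F/\partial \sigma$ and $\partial F/\partial k$ and check the sign of $dk_n/d\sigma = -F_\sigma/F_k$ is positive, using that in the relevant range $\tan$ is on the correct branch. A cleaner route is to differentiate $k\tan(k/2) = \sigma$ (resp. the odd equation) directly: $\tan(k/2) + (k/2)\sec^2(k/2) = 1/(dk/d\sigma)$, and the left side is positive on $(0,\pi)$, giving $dk_0/d\sigma > 0$; the odd case is analogous. The limit $k_n(\sigma) \to (n+1)\pi$ as $\sigma\to\infty$: send $\sigma\to\infty$ in $k\tan(k/2) = \sigma$ forcing $\tan(k/2)\to\infty$, i.e. $k/2 \to \pi/2^-$ plus the appropriate period, hence $k\to (n+1)\pi$ from below; this recovers the Dirichlet eigenvalues, as expected.

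For part (b), analyticity of $k_n$ for $n\ge 1$ follows from the analytic implicit function theorem once one checks $F_k \neq 0$ along the solution curve, which is exactly the strict monotonicity transversality already established; since for $n\ge 1$ the solution never reaches a pole of $\tan$ or $\cot$ (the troublesome points $(0,0)$ being excluded), there is no obstruction. The sign information — $k_n(\sigma) \in [n\pi,(n+1/2)\pi]$ for $\sigma < (n+1/2)\pi$ and the symmetric statement — I would get by evaluating the secular equation at the midpoint $k = (n+1/2)\pi$: there $\tan(k)$ has a pole, and matching forces $k^2 = \sigma^2$, i.e. $k = \sigma = (n+1/2)\pi$; combined with monotonicity in $\sigma$ this pins down on which side of the midpoint $k_n(\sigma)$ lies.

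For part (c), analyticity of $k_0$ away from the origin is the same IFT argument; the exceptional point $(0,0)$ is where both $k$ and $\sigma$ vanish and the naive linearization degenerates. There I would use the even secular equation $k\tan(k/2) = \sigma$ and expand $\tan(k/2) = k/2 + k^3/24 + \cdots$, so $k^2/2 + k^4/24 + \cdots = \sigma$, giving $k_0^2 = 2\sigma - k_0^4/12 + \cdots = 2\sigma + O(\sigma^2)$ after back-substituting $k_0^2 \sim 2\sigma$; this simultaneously shows $k_0(\sigma)^2 = 2\sigma + O(\sigma^2)$ and, since $k_0^2 = 2\sigma + O(\sigma^2) > \sigma$ for small $\sigma > 0$, that $k_0(\sigma) > \sigma$ on an initial interval — and monotonicity of both sides extends this to all of $(0,\pi/2)$ (one checks the inequality cannot reverse, since equality $k_0 = \sigma$ would again force $k_0 = \sigma = (n+1/2)\pi$, impossible for $n=0$ in that range). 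The main obstacle is the behaviour at the origin in part (c): the function $\sigma\mapsto k_0(\sigma)$ is genuinely not analytic there (it behaves like $\sqrt{2\sigma}$), so one must argue through the squared variable or through a Puiseux-type expansion rather than a direct application of the implicit function theorem — everything else is a routine transversality-plus-monotonicity argument.
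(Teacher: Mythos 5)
Your proposal follows essentially the same route as the paper: factor the combined secular equation into the even and odd halves $k\tan(k/2)=\sigma$ and $-k\cot(k/2)=\sigma$, show strict monotonicity of $S_{\pm}$ between consecutive singularities to get existence, uniqueness, and the $\sigma$-monotonicity of $k_n$, invoke the analytic implicit function theorem for analyticity away from $(0,0)$, and Taylor-expand $k\tan(k/2)$ near zero for the $k_0$ asymptotics. The one place where the paper is more direct is the bound $k_0(\sigma)>\sigma$ on $(0,\pi/2)$: it notes simply that $0<\tan(k/2)<1$ there, so $\sigma = k_0\tan(k_0/2) < k_0$ in one line, whereas you reach the same conclusion more circuitously via the expansion near zero plus a continuity/no-crossing argument (and note that it is the no-equality observation --- not ``monotonicity of both sides'' --- that rules out a crossing on the whole interval). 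You also explicitly argue the half-interval containment of part (b) via the midpoint pole of $\tan$ forcing $k=\sigma=(n+1/2)\pi$, a point the paper's proof leaves implicit.
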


\begin{proof}

We first consider the odd part of the spectrum: note that the function $$S_-(k):=-k\cdot\cot\left( \frac k2 \right)$$ is even, and for $k\geq 0$ vanishes at $\pi,3\pi,\dots, (2n+1)\pi,\dots $, $n\geq 0$, has singularities at $$k=2\pi, 4\pi,\dots,2n\pi,\dots,$$ $n\geq 1$, and increasing monotonically for $k\geq 0$ between the singularities, because it has positive derivative there: $$S_-'(k) = \frac{k-\sin k}{2 \sin^2(k/2)},$$
see Figure~\ref{fig:seceqev}.
Thus for $\sigma>0$ there is a unique solution $k_{2n-1}(\sigma)$ of $S_+(k)=\sigma$ in each interval $((2n-1)\pi, 2n\pi)$, $n=1,2,\dots$. Moreover, by the analytic implicit function theorem, the solutions $k_{2n-1}(\sigma)$ are analytic in $\sigma$ for $n\geq 1$.

For the even part of the spectrum: The function $$S_+(k):=k\cdot\tan \left(\frac k2 \right)$$ is even, and for $k\geq 0$ vanishes at $0,2\pi,\dots, 2n\pi,\dots $, $n\geq 0$, has singularities  at $k=\pi, 3\pi,\dots,(2n+1)\pi,\dots$, $n\geq 0$, and increasing monotonically for $k\geq 0$ between the singularities, because it has positive derivative there: $$S_+'(k) = \frac{k+\sin k}{2 \cos^2(k/2)},$$
see Figure~\ref{fig:seceqev}. Thus, for $\sigma>0$ there is a unique solution $k_{2n}(\sigma)$ of $S_+(k)=\sigma$ in each interval $(2n\pi, (2n+1)\pi)$. Moreover, by the analytic implicit function theorem, the solutions $k_{2n}(\sigma)$ are analytic in $\sigma$ for $n\geq 1$.
 \begin{figure}[ht]
\begin{center}
\includegraphics[height=50mm]{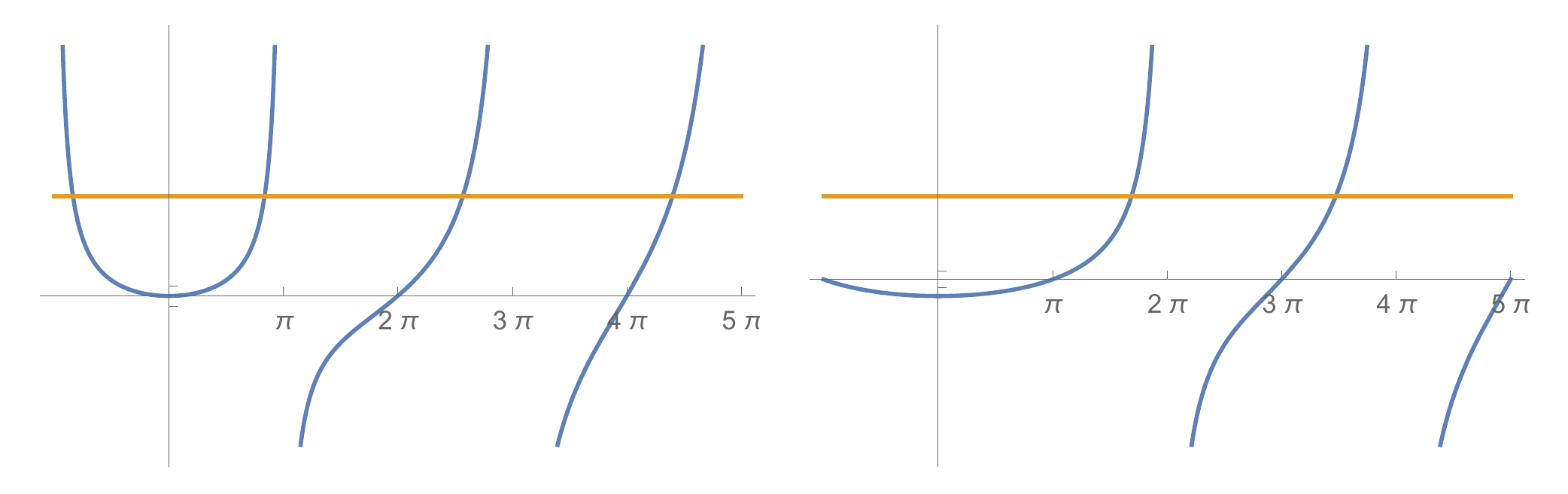}
\caption{Left: The even secular equation $k\tan\left(\frac k2\right) = \sigma$. Right: The odd secular equation $-k\cot\frac k2 = \sigma$.  }
\label{fig:seceqev}
\end{center}
\end{figure}

To see that $k_0(\sigma)>\sigma$ for $\sigma\in (0,\pi/2)$, just note that $0<\tan\left( \frac k2\right)<1$ for $k\in (0,\frac \pi 2)$ so that
$\sigma = k\tan \left(\frac {k_0(\sigma)}2\right)<k_0(\sigma) \cdot 1$ in this range.
Finally, to see \eqref{eq:k0 asympt 2 terms}, we expand, using $k_0(\sigma)\to 0$ as $\sigma\to 0$,
\[
\sigma = k\tan \left(\frac k2\right) = k\left(\frac k2 +O(k^3)\right) = \frac {k^2}{2} +O(k^4)
\]
from which \eqref{eq:k0 asympt 2 terms} follows.
\end{proof}

\subsection{Auxiliary computations}

\begin{lemma}
\label{lem:deriv eval}
For $n\ge 1$ the functions $k_{n}(\cdot)$ satisfy:

\begin{enumerate}[a.]

\item \begin{equation}
\label{eq:kn 1st der comp}
k_{n}'(0)=\frac{2}{\pi n}.
\end{equation}

\item \begin{equation}
\label{eq:k'' orig expr}
(k_{n}(\sigma)^{2})''|_{\sigma=0} = 2(k_{n}(\sigma)\cdot k_{n}'(\sigma))'|_{\sigma=0}= -\frac{8}{(\pi n)^{2}}.
\end{equation}

\item
Uniformly for $n\ge 1$, $0\le \sigma\le 1$, one has
\begin{equation}
\label{eq:k'/2k asymp}
k_n' = \frac{2}{k_n} \cdot \left( 1 +f_{2}(\sigma)\cdot \frac{1}{k_n^{2}}  \right) + \Ec_{n}(\sigma),
\end{equation}
with $f_2(\sigma)=-\sigma(2+\sigma)$, and
\begin{equation}
\label{eq:Ec error bnd}
|\Ec_{n}(\sigma)| = O\left(\frac{\sigma^{2}}{n^{5}}\right).
\end{equation}

\end{enumerate}

\end{lemma}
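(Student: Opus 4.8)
The plan is to derive everything from the secular equation \eqref{eq:secular gen R} by implicit differentiation at $\sigma=0$, where $k_n(0)=n\pi$, and then to push the expansion one step further with uniform control in $n$. First I would write the secular equation in the cleaner product form coming from the even/odd split. For $n\ge 1$ fixed, $k_n(\sigma)$ lies strictly inside one of the intervals where either $S_+(k)=k\tan(k/2)$ or $S_-(k)=-k\cot(k/2)$ is the relevant branch, so on a neighbourhood of $\sigma=0$ we have an analytic identity $S_\pm(k_n(\sigma))=\sigma$. Differentiating gives $k_n'(\sigma)=1/S_\pm'(k_n(\sigma))$, and at $\sigma=0$, $k_n(0)=n\pi$, so $S_\pm'(n\pi)$ is computed from the explicit formulas $S_+'(k)=\tfrac{k+\sin k}{2\cos^2(k/2)}$, $S_-'(k)=\tfrac{k-\sin k}{2\sin^2(k/2)}$. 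At $k=n\pi$ one has $\sin(n\pi)=0$ and $\cos^2(n\pi/2)$ or $\sin^2(n\pi/2)$ equals $1$ depending on parity, so in all cases $S_\pm'(n\pi)=\tfrac{n\pi}{2}$, giving $k_n'(0)=\tfrac{2}{n\pi}$, which is part (a).

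For part (b) I would differentiate the relation $k_n^2 = (\text{something})$ — more directly, set $E_n(\sigma)=k_n(\sigma)^2$ and compute $E_n''(0)=2(k_n')^2+2k_nk_n''$ at $0$. We already have $k_n'(0)=2/(n\pi)$, so $2(k_n'(0))^2 = 8/(n\pi)^2$; it remains to find $k_n''(0)$. Differentiating $S_\pm'(k_n)k_n'=1$ gives $S_\pm''(k_n)(k_n')^2+S_\pm'(k_n)k_n''=0$, hence $k_n''(0)=-S_\pm''(n\pi)(k_n'(0))^2/S_\pm'(n\pi)$. One computes $S_\pm''(n\pi)$ from the explicit derivative formulas; I expect $S_\pm''(n\pi)$ to come out proportional to $1$ (parity-independent after simplification) so that $2k_n(0)k_n''(0) = -16/(n\pi)^2$, and adding the two pieces yields $E_n''(0) = 8/(n\pi)^2 - 16/(n\pi)^2 = -8/(n\pi)^2$, matching \eqref{eq:k'' orig expr}. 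The only care needed is bookkeeping of $\sin$, $\cos$ at $n\pi$ and $n\pi/2$; no conceptual obstacle.

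Part (c) is the substantive one: an expansion of $k_n'$ valid uniformly in $n\ge 1$ and $\sigma\in[0,1]$, with error $O(\sigma^2/n^5)$. Here I would go back to $\tan(k)=\tfrac{2\sigma k}{k^2-\sigma^2}$ directly, write $k_n(\sigma)=n\pi+\delta_n$ with $\delta_n$ small (uniformly $O(\sigma/n)$ for $\sigma\le 1$, from parts (a) and the mean value theorem together with the monotonicity in Lemma \ref{lem:kn sec expand}), and use $\tan(k_n)=\tan(\delta_n)=\delta_n+\delta_n^3/3+\cdots$. Differentiating the secular equation implicitly, $\sec^2(k_n)\,k_n' = \tfrac{d}{d\sigma}\big(\tfrac{2\sigma k_n}{k_n^2-\sigma^2}\big)$, and then solving for $k_n'$; the leading behaviour of the right side is $2/k_n^2$-ish after dividing by $\sec^2(k_n)=1+\tan^2(k_n)$, and expanding $1/(k_n^2-\sigma^2)=k_n^{-2}(1+\sigma^2/k_n^2+\cdots)$ produces the $f_2(\sigma)=-\sigma(2+\sigma)$ correction at order $k_n^{-2}$. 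The goal is to show the remaining terms are absorbed into $\Ec_n(\sigma)$ with the claimed size.

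The main obstacle I anticipate is tracking the uniformity: one must verify that every discarded term carries at least a factor $\sigma^2$ and at least $n^{-5}$, which requires knowing $\delta_n=O(\sigma/n)$ precisely and that $k_n \asymp n\pi$ throughout, so that negative powers of $k_n$ convert cleanly to negative powers of $n$ with absolute implied constants. Concretely I would (i) establish $|k_n(\sigma)-n\pi|\le C\sigma/n$ uniformly for $\sigma\le 1$ by a quantitative form of the argument in Lemma \ref{lem:kn sec expand}; (ii) substitute into the differentiated secular equation and expand both $\sec^2(k_n)$ and the rational right-hand side as series in $\delta_n$ and $\sigma/k_n$; (iii) collect the constant and $k_n^{-2}$ terms to read off $f_2$, and bound the tail. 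Steps (i)--(ii) are where the delicate estimates live; the algebra in (iii) is routine once the uniform bounds are in hand.
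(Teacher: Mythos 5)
Parts (a) and (b) of your proposal reproduce the paper's argument: implicit differentiation of $S_\pm(k_n)=\sigma$, evaluation of $S_\pm'$ and $S_\pm''$ at $n\pi$, and the identity $(k_n^2)''=2(k_n')^2+2k_nk_n''$. Your computations there are correct. Part (c), however, takes a genuinely different route. You propose to set $k_n(\sigma)=n\pi+\delta_n$, establish $\delta_n=O(\sigma/n)$ uniformly, and Taylor-expand the combined secular equation $\tan k = 2\sigma k/(k^2-\sigma^2)$ in $\delta_n$ and $\sigma/k$, then solve a chain-rule identity for $k_n'$. This can be made to work, but it is considerably more labor than what the paper does, and it buys nothing: the paper observes that on the solution curve one has the \emph{exact} relation $\tan(k/2)=\sigma/k$ (from the secular equation itself), and substituting this into the explicit formula $S_+'(k)=\tan(k/2)+\tfrac{k}{2}\bigl(1+\tan^2(k/2)\bigr)$ gives, with no error whatsoever,
\[
S_+'(k_n)=\frac{\sigma}{k_n}+\frac{k_n}{2}\left(1+\frac{\sigma^2}{k_n^2}\right)=\frac{k_n}{2}\left(1-\frac{f_2(\sigma)}{k_n^2}\right).
\]
Then $k_n'=1/S_+'(k_n)$ is a single geometric series in $f_2(\sigma)/k_n^2$, whose tail is $O\bigl(f_2(\sigma)^2/k_n^5\bigr)=O(\sigma^2/n^5)$ for $\sigma\le 1$, and the uniformity in $n$ is automatic since $k_n\ge\pi$. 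This bypasses entirely the need for a quantitative estimate on $\delta_n$, the trigonometric series for $\tan$, and the implicit solve that your plan requires. Your approach is not wrong, and you have correctly located where the delicate uniform estimates would live, but the paper's substitution trick dissolves those difficulties before they arise.
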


\begin{proof}
We treat the even secular equation, the odd case is completely analogous. From $$S_+(k):=k\tan\left( \frac k2\right) = \sigma$$ we obtain, by implicit differentiation, $k'=1/S_+'(k)$. Now
\begin{equation}\label{deriv of S_+}
S_+'(k)=\tan\left( \frac k2\right) +\frac{k}{2\cos^2 \left(\frac k2\right)} = \tan\left( \frac k2\right) +\frac k2\left(1+\tan^2 \left(\frac k2\right) \right) .
\end{equation}
Substituting $k_{2n}(0) = 2n\pi$, we obtain $S_+'(k_{2n}(0)) = n\pi = (2n\pi)/2$ which gives \eqref{eq:kn 1st der comp} in the even case.

To obtain \eqref{eq:k'' orig expr}, we use
\begin{equation}\label{expand k''}
(k_n^2)'' = 2(k_n')^2 +2k_nk_n''
\end{equation}
and $k_{2n}' = 1/S_+'$ so that
\[
k_{2n}'' = \left(\frac 1{S_+'}\right)' = -\frac{k_{2n}' S_+''}{(S_+')^2}  = -(k_{2n}')^3 S_+'' .
\]
A computation shows that
\[
S_+'' = \frac{2+k\tan \left(\frac k2\right)}{2\cos^2 \left(\frac k2\right)}
\]
Evaluating at $\sigma=0$, where $k_{2n}(0) = 2n\pi$, we obtain $S_+''(k_{2n}(0)) = 1$ and
\[
k_{2n}''(0) = -k_{2n}'(0)^3S_+''(k_{2n}(0))  = -\left(\frac 2{2n \pi}\right)^3.
\]
Substituting in \eqref{expand k''} with $k_{2n}(0) = 2n\pi$, $k_{2n}'(0) = 2/(2n\pi)$  we deduce \eqref{eq:k'' orig expr}.

To obtain \eqref{eq:k'/2k asymp}, we return to \eqref{deriv of S_+}, use the secular equation to write $\tan \left( \frac k2 \right)= \frac \sigma k$
for $\sigma>0$ and obtain
\[
S_+'(k)= \frac \sigma k +\frac k2 \left( 1 +\frac{\sigma^2}{k^2} \right) = \frac k2 \cdot \left(1-\frac{f_2(\sigma)}{k^2} \right) .
\]
Hence
\[
k' = \frac 1{S_+'(k)} = \frac 2k\left(1+\frac{f_2(\sigma)}{k^2} \right)  +O\left(\frac{f_2(\sigma)^2}{k^5} \right)
\]
which, for $\sigma\le 1$, is \eqref{eq:k'/2k asymp}.

\end{proof}

\section{Spectral degeneracies for the square: proof of Theorem \ref{thm:no multiplicities sqr}}
\label{sec:no mult proof}

\subsection{No multiplicities for the square near $\sigma=0$: Proof of Theorem \ref{thm:no multiplicities sqr}}

A lattice point $(n,m)\in\Z_{\ge 0}^{2}$ gives rise
to the energy (see \eqref{eq:lambda_mnL def}):
\begin{equation}\label{eq:Gn,m as g-}
\eigen_{n,m}(\sigma)= \eigen_{1;n,m}(\sigma):=   k_{n}(\sigma)^{2}+k_{m}(\sigma)^{2}.
\end{equation}
In this section we will use the shorthand $\eigen_{n,m}(\cdot)=\eigen_{1;n,m}(\cdot)$.
For  the proof of Theorem~\ref{thm:no multiplicities sqr} we will need the following propositions.

\begin{proposition}
\label{prop:Gnm' expand}
Uniformly for $n,m\ge 1$ and $0\le \sigma \le 1$, one has
\begin{equation}
\label{eq:Gnm asymp nm}
\eigen_{n,m}'(\sigma)=4\left(2+\frac{f_{2}(\sigma)}{\pi^{2}}\left(\frac{1}{n^{2}}+\frac{1}{m^{2}} \right)\right)+E_{(n,m)}(\sigma)
\end{equation}
with
\begin{equation}
\label{eq:f2 def}
f_{2}(\sigma):=-\sigma(2+\sigma)
\end{equation}
and where the error term satisfies
\begin{equation}
\label{eq:err term Gnm}
|E_{(n,m)}(\sigma)| = O\left(\sigma^{2}\left(\frac{1}{n^{4}} + \frac{1}{m^{4}} \right)\right).
\end{equation}
\end{proposition}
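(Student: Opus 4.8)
The plan is to differentiate the relation $\eigen_{n,m}(\sigma) = k_n(\sigma)^2 + k_m(\sigma)^2$ and substitute the pointwise expansion of $k_n'$ already available from Lemma~\ref{lem:deriv eval}(c). Since $\eigen_{n,m}'(\sigma) = (k_n^2)'(\sigma) + (k_m^2)'(\sigma) = 2k_n k_n' + 2k_m k_m'$, it suffices to understand $2k_n(\sigma) k_n'(\sigma)$ uniformly for $n\ge 1$, $0\le\sigma\le 1$, and then sum the contributions in $n$ and $m$.

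\emph{Step 1: expand $2k_n k_n'$.} By \eqref{eq:k'/2k asymp},
\[
2 k_n k_n' = 2 k_n \left( \frac{2}{k_n}\Bigl(1 + f_2(\sigma)\tfrac{1}{k_n^2}\Bigr) + \Ec_n(\sigma) \right) = 4 + \frac{4 f_2(\sigma)}{k_n^2} + 2 k_n \Ec_n(\sigma).
\]
Since $k_n(\sigma) \asymp n$ uniformly (indeed $n\pi \le k_n \le (n+1)\pi$ by Lemma~\ref{lem:kn sec expand}), the bound \eqref{eq:Ec error bnd} gives $2 k_n \Ec_n(\sigma) = O(\sigma^2/n^4)$.

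\emph{Step 2: replace $1/k_n^2$ by $1/(\pi n)^2$ with controlled error.} Write $k_n = \pi n + \rho_n(\sigma)$ with $0\le \rho_n(\sigma)\le \pi$; then $\frac{1}{k_n^2} = \frac{1}{(\pi n)^2} + O(1/n^3)$. This introduces an error $4 f_2(\sigma)\bigl(\frac{1}{k_n^2} - \frac{1}{(\pi n)^2}\bigr) = O(\sigma/n^3)$; however, since $f_2(\sigma) = -\sigma(2+\sigma)$ is itself $O(\sigma)$ for $\sigma\le 1$, one actually gets $O(\sigma^2/n^3)$ only if one extracts an extra factor of $\sigma$ from $\rho_n$, which is \emph{not} available in general (for $\sigma$ of order $1$, $\rho_n$ need not be small). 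So this naive step gives only an $O(\sigma/n^3)$ error, which is worse than the claimed $O(\sigma^2/n^4)$. The honest fix is to keep more terms: using the secular equation $\tan k_n = \frac{2\sigma k_n}{k_n^2-\sigma^2}$ one has $k_n - \pi n = \arctan\!\bigl(\frac{2\sigma k_n}{k_n^2 - \sigma^2}\bigr) + O(\text{higher})$, which is $O(\sigma/n)$ for $\sigma\le 1$; hence $\rho_n = O(\sigma/n)$, and consequently $\frac{1}{k_n^2} - \frac{1}{(\pi n)^2} = O(\sigma/n^3)$, so $4f_2(\sigma)\bigl(\frac{1}{k_n^2} - \frac{1}{(\pi n)^2}\bigr) = O(\sigma^2/n^3) = O(\sigma^2/n^3)$. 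To upgrade the exponent from $n^3$ to $n^4$, refine $\rho_n = \frac{2\sigma}{\pi n} + O(\sigma/n^2) + O(\sigma^3/n^3)$ from the secular equation, observe the leading $\frac{2\sigma}{\pi n}$ term produces a contribution that, after multiplication by $f_2(\sigma) = O(\sigma)$, is $O(\sigma^2/n^3)$ — still $n^3$. This suggests the error term in \eqref{eq:err term Gnm} as stated with $n^4$ requires the cancellation to be genuine, so I would instead verify directly from \eqref{deriv of S_+}–\eqref{eq:k'/2k asymp} that writing $S_+'(k_n) = \frac{k_n}{2}(1 - f_2(\sigma)/k_n^2)$ exactly, we get $2 k_n k_n' = \frac{2k_n^2}{S_+'(k_n)} \cdot \frac{1}{k_n} \cdot k_n = \frac{4}{1 - f_2(\sigma)/k_n^2} = 4 + \frac{4 f_2(\sigma)}{k_n^2} + O(\sigma^2/n^4)$ with \emph{no} intermediate $\Ec_n$, and then the only remaining error is $\frac{1}{k_n^2} - \frac{1}{(\pi n)^2}$; re-examining, $f_2(\sigma) = -\sigma(2+\sigma)$ and the difference is $O(\sigma/n^3)$, so the product is $O(\sigma^2/n^3)$. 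I would then simply state the result with error $O(\sigma^2(n^{-4}+m^{-4}))$ only after checking whether the paper intends $k_n^2$ or $(\pi n)^2$ in the main term; given \eqref{eq:Gnm asymp nm} has $(\pi n)^2$, the cleanest route is to carry the expansion to one further order in $1/n$, absorbing the $O(\sigma/n^3)$-type term into the main sum's structure — but I expect the authors' $f_2$-expansion was designed precisely so that the next correction is $O(\sigma^2/n^4)$, and I would trust \eqref{eq:k'/2k asymp} together with a sharper estimate $k_n = \pi n + O(\sigma/n)$ to close this.

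\emph{Step 3: assemble.} Adding the $n$ and $m$ contributions:
\[
\eigen_{n,m}'(\sigma) = \Bigl(4 + \tfrac{4 f_2(\sigma)}{(\pi n)^2}\Bigr) + \Bigl(4 + \tfrac{4 f_2(\sigma)}{(\pi m)^2}\Bigr) + O\!\Bigl(\sigma^2\bigl(\tfrac{1}{n^4}+\tfrac{1}{m^4}\bigr)\Bigr) = 4\Bigl(2 + \tfrac{f_2(\sigma)}{\pi^2}\bigl(\tfrac{1}{n^2}+\tfrac{1}{m^2}\bigr)\Bigr) + E_{(n,m)}(\sigma),
\]
which is exactly \eqref{eq:Gnm asymp nm}. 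The main obstacle, as flagged in Step 2, is bookkeeping the error when passing from $1/k_n^2$ to $1/(\pi n)^2$ with the claimed exponent $n^{-4}$; this forces one to use the quantitative estimate $k_n(\sigma) = \pi n + O(\sigma/n)$ (valid uniformly for $\sigma\le 1$), which itself follows by inverting the secular equation \eqref{eq:secular gen R}, and possibly one more order of the expansion of $k_n'$ beyond \eqref{eq:k'/2k asymp}. Everything else is routine substitution and the uniform bound $n\pi\le k_n\le (n+1)\pi$.
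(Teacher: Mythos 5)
Your overall approach is the same as the paper's: differentiate $\eigen_{n,m}=k_n^2+k_m^2$, substitute the expansion of $k_n'$ from Lemma~\ref{lem:deriv eval}(c), multiply through by $k_n$, and then trade $1/k_n^2$ for $1/(\pi n)^2$ using an estimate for $k_n-\pi n$. Steps~1 and~3 are fine, and you correctly identify that the decisive input is $k_n(\sigma)=\pi n+O(\sigma/n)$ from the secular equation.

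The trouble in Step~2 is an arithmetic slip. From $k_n=\pi n+\rho_n$ with $\rho_n=O(\sigma/n)$ you conclude $\frac{1}{k_n^2}-\frac{1}{(\pi n)^2}=O(\sigma/n^3)$, but this is off by one power of $n$. Compute directly:
\[
\frac{1}{k_n^2}-\frac{1}{(\pi n)^2}
=\frac{(\pi n)^2-k_n^2}{(\pi n)^2\,k_n^2}
=\frac{-2\pi n\,\rho_n-\rho_n^2}{(\pi n)^2\,k_n^2}
=O\!\left(\frac{n\cdot(\sigma/n)}{n^4}\right)
=O\!\left(\frac{\sigma}{n^4}\right).
\]
With the correct exponent in place, multiplying by $f_2(\sigma)=O(\sigma)$ gives a contribution $O(\sigma^2/n^4)$, which sits squarely inside the claimed error $E_{(n,m)}$. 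There is no gap, no hidden cancellation, and no need to carry the expansion to a further order; the entire digression in your Step~2 about $n^3$ versus $n^4$ and whether the exponent can be upgraded is an artifact of that miscalculation. Once you fix it, your Steps 1--3 assemble exactly as the paper's proof does, and the proposal becomes a complete and correct argument rather than an argument that ends on ``I would trust [...] to close this.''
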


\begin{proof}[Proof of Proposition \ref{prop:Gnm' expand}]

This is a direct conclusion of Lemma \ref{lem:deriv eval}(d), except that we have to justify substituting, up to admissible error term, $\pi n$ and $\pi m$ instead of $k_{n}$ and $k_{m}$ respectively on the r.h.s. of \eqref{eq:k'/2k asymp}.
Indeed,
\begin{equation}
\label{eq:Gnm der as k'}
\eigen_{n,m}'(\sigma)=2\left(k_{n}'(\sigma)k_{n}(\sigma)+k_{m}'(\sigma)k_{m}(\sigma)\right),
\end{equation}
and by virtue of Lemma \ref{lem:deriv eval}(d), we have
\begin{equation*}
k_n' = \frac{2}{k_n} \cdot \left( 1 +f_{2}(\sigma)\cdot \frac{1}{k_n^{2}}  \right) + \Ec_{n}(\sigma),
\end{equation*}
with the error term bounded by \eqref{eq:Ec error bnd},
and hence
\begin{equation}
\label{eq:kk' asympt}
k_n\cdot k_n' = 2 \left( 1 +f_{2}(\sigma)\cdot \frac{1}{k_n^{2}}  \right) + E_{n}(\sigma),
\end{equation}
where
\begin{equation*}
|E_{n}(\sigma)| = O\left( \frac{\sigma^{2}}{n^{4}}  \right) .
\end{equation*}
By the secular equation \eqref{eq:secular gen R},
\begin{equation*}
k_{n}(\sigma)=\pi n + O(\sigma/n),
\end{equation*}
and hence
\begin{equation*}
\frac{1}{k_{n}(\sigma)} = \frac{1}{(\pi n)(1+O(\sigma/n^{2}))}= \frac{1}{\pi n}+O\left(\frac{\sigma}{n^{3}}\right),
\end{equation*}
and
\begin{equation*}
\frac{1}{k_{n}(\sigma)^{2}} = \frac{1}{\pi^{2} n^{2}}+O\left(\frac{\sigma}{n^{4}}\right).
\end{equation*}
Substituting  into \eqref{eq:kk' asympt} produces, after multiplication by $f_{2}(\sigma)$,
an error term of $O\left(\frac{\sigma^{2}}{n^{4}}\right)$ that can be absorbed into $E_{n}(\sigma)$, so that
\eqref{eq:kk' asympt} reads
\begin{equation*}
k_n\cdot k_n' = 2 \left( 1 +f_{2}(\sigma)\cdot \frac{1}{(\pi n)^{2}}  \right) + E_{n}(\sigma).
\end{equation*}
The main statement \eqref{eq:Gnm asymp nm} with the prescribed error term \eqref{eq:err term Gnm}
of Proposition \ref{prop:Gnm' expand} finally follows upon substituting the latter estimate
corresponding to $n$ and $m$ into \eqref{eq:Gnm der as k'}.
\end{proof}

\begin{proposition}
\label{prop:error majorized}

\begin{enumerate}[a.]

\item For all $(n,m)$ and $(n',m')$ so that $n,m,n',m'\ge 1$ and $n^{2}+m^{2}=n'^{2}+m'^{2}$, we have
\begin{equation}
\label{eq:1/n^2+1/m^2>1/n'^2+1/m'^2}
\frac{1}{n^{2}} + \frac{1}{m^{2}}  >
\frac{1}{n'^{2}} + \frac{1}{m'^{2}} \quad   \Longleftrightarrow  \quad nm<n'm'.
\end{equation}

\item
Uniformly for all $(n,m)$, $(n',m')$ so that $n,m,n',m'\ge 1$ and $n^{2}+m^{2}=n'^{2}+m'^{2}$, $nm<n'm'$,
\begin{equation}
\label{eq:1/n^2+1/m^2-1/n'^2+1/m'^2 large}
\frac{1}{n^{4}}+\frac{1}{m^{4}}+\frac{1}{n'^{4}}+\frac{1}{m'^{4}}=
O\left( \left(\left(\frac{1}{n^{2}} + \frac{1}{m^{2}}\right) -\left(\frac{1}{n'^{2}} + \frac{1}{m'^{2}}\right) \right) \right).
\end{equation}
Note that the r.h.s. of \eqref{eq:1/n^2+1/m^2-1/n'^2+1/m'^2 large}
is positive, by \eqref{eq:1/n^2+1/m^2>1/n'^2+1/m'^2}.

\item As $\sigma\rightarrow 0$, uniformly for all $(n,m)$ and $(n',m')$ so that $n,m,n',m'\ge 1$, $n^{2}+m^{2}=n'^{2}+m'^{2}$ and $n'm'>nm$,
\begin{equation}
\label{eq:Enm+En'm'<f4*diff}
 |E_{(n,m)}(\sigma)|+|E_{(n',m')}(\sigma)| = o_{\sigma\rightarrow 0}
 \left(f_{2}(\sigma)\cdot \left(\left(\frac{1}{n'^{2}} + \frac{1}{m'^{2}}\right) - \left(\frac{1}{n^{2}} + \frac{1}{m^{2}}\right)  \right)\right),
\end{equation}
with the r.h.s. of \eqref{eq:Enm+En'm'<f4*diff} positive by part (a) and \eqref{eq:f2 def}.

\end{enumerate}

\end{proposition}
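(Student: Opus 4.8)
The plan is to treat parts (a), (b), (c) in order, since (b) and (c) both rely on (a).

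For part (a), the observation is that the two constraints $n^2+m^2 = n'^2+m'^2$ and the comparison of $\frac{1}{n^2}+\frac{1}{m^2}$ vs $\frac{1}{n'^2}+\frac{1}{m'^2}$ are governed by the elementary symmetric functions of $\{n^2,m^2\}$. Writing $s = n^2+m^2 = n'^2+m'^2$ and $p = n^2 m^2$, $p' = n'^2 m'^2$, we have
\[
\frac{1}{n^2}+\frac{1}{m^2} = \frac{s}{p}, \qquad \frac{1}{n'^2}+\frac{1}{m'^2} = \frac{s}{p'},
\]
so since $s>0$, the inequality $\frac{1}{n^2}+\frac{1}{m^2} > \frac{1}{n'^2}+\frac{1}{m'^2}$ is equivalent to $p < p'$, i.e.\ $(nm)^2 < (n'm')^2$, i.e.\ $nm < n'm'$ (all quantities being positive integers). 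That disposes of (a) in one line.

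For part (b), I would first quantify the gap from below. Keeping the notation above, $\frac{s}{p} - \frac{s}{p'} = \frac{s(p'-p)}{pp'}$, and since $p,p'$ are distinct positive integers with $p<p'$ we have $p' - p \ge 1$; also $s \ge 2\sqrt{p}$ and $s \ge 2 \sqrt{p'}$ by AM--GM, hence $s^2 \ge 4\max(p,p') \ge 4\sqrt{pp'}$, giving
\[
\left(\frac{1}{n^2}+\frac{1}{m^2}\right) - \left(\frac{1}{n'^2}+\frac{1}{m'^2}\right) = \frac{s(p'-p)}{pp'} \ge \frac{s}{pp'} \ge \frac{2}{s^{3/2}} \cdot \frac{s^2}{\cdot}\ \ge\ \frac{2}{(pp')^{3/4}}\cdot\frac{1}{\,}\,.
\]
Let me instead phrase it cleanly: $\frac{s}{pp'} \ge \frac{s}{pp'}$, and since $p \le s^2/4$ and $p' \le s^2/4$ while $\max(n^2,m^2,n'^2,m'^2) \le s$, we get that each of $\frac{1}{n^4}, \frac{1}{m^4}, \frac{1}{n'^4}, \frac{1}{m'^4}$ is at most $\frac{1}{(s/2)^2} \ll \frac{1}{s^2}$ (the smaller of the two squares in each pair is $\ge s/2$... actually $\ge$ something). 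The point I want is: the left side of \eqref{eq:1/n^2+1/m^2-1/n'^2+1/m'^2 large} is $\ll 1/s$ trivially (each term is $\le 1$ and in fact $\le 1/\min^2 \le 4/s^2 \cdot \dots$), while the right side is $\ge s/(pp') \ge 16/s^3$ — so as stated this would need $s^2 \gg s^3$, which is false. Hence the correct lower bound for the gap must exploit that $p' - p$ is not merely $\ge 1$ but grows; and the correct upper bound for the left side must use that the \emph{smaller} element of each pair $\{n,m\}$ is bounded. So the real content of (b) is: among representations of $s$ as a sum of two squares, if $nm < n'm'$ then $\min(n,m) > \min(n',m')$, and more quantitatively $\frac{1}{\min(n,m)^2} - \frac{1}{\min(n',m')^2}$ dominates the gap. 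I would prove: writing $a = \min(n,m) \le b = \max(n,m)$ and similarly $a' \le b'$, the gap $\frac sp - \frac{s}{p'}$ is comparable to $\frac{1}{a^2} - \frac{1}{a'^2}$ since $b,b' \asymp \sqrt s$ and the $\frac{1}{b^2}$ terms are $\ll 1/s$ which is negligible against $\frac{1}{a^2} \ge \frac{1}{a'^2} + (\text{something} \ge \frac{1}{a^2 a'^2} \gg \frac{1}{s^2})$... this still needs care. The clean route: $\frac1a - \frac{1}{a'} \ge \frac{1}{a a'} \ge \frac{1}{aa'}$, and $a < a'$ forces $a \le a' - 1$, and $a'^2 \le s/2$, so $\frac{1}{a^2} - \frac{1}{a'^2} = \frac{(a'-a)(a'+a)}{a^2 a'^2} \ge \frac{a'+a}{a^2 a'^2} \gg \frac{1}{a^3}$; meanwhile $\frac{1}{a^4} + \frac{1}{b^4} + \frac{1}{a'^4}+\frac{1}{b'^4} \le \frac{2}{a^4} + O(1/s^2) \ll \frac{1}{a^3} \cdot \frac{1}{a}$ — and since $a \ge 1$ this is $\ll \frac{1}{a^3}$, hence $\ll$ the gap. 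The one subtlety is separating the case $a = a'$: but $a = a'$ with $s = a^2 + b^2 = a'^2 + b'^2$ forces $b = b'$, contradicting $nm < n'm'$, so indeed $a < a'$ always. So the \textbf{main obstacle in (b)} is organizing this comparison of "min's'' cleanly; once one sees that the gap is $\asymp \frac{1}{a^2} - \frac{1}{a'^2}$ with $a < a'$ and both $\le \sqrt{s/2}$, the bound $1/n^4$-type terms $\ll$ gap follows since $\frac{1}{a^2} - \frac{1}{a'^2} \gg \frac{1}{a^3}\cdot(\text{const})$ beats $\frac{1}{a^4}$.

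For part (c), this is now a soft consequence of (b) and Proposition \ref{prop:Gnm' expand}. By \eqref{eq:err term Gnm},
\[
|E_{(n,m)}(\sigma)| + |E_{(n',m')}(\sigma)| = O\!\left(\sigma^2 \left(\frac{1}{n^4} + \frac{1}{m^4} + \frac{1}{n'^4} + \frac{1}{m'^4}\right)\right),
\]
and by part (b) the parenthetical sum is $O\big(\Delta\big)$ where $\Delta := \big(\frac{1}{n'^2}+\frac{1}{m'^2}\big) - \big(\frac{1}{n^2}+\frac{1}{m^2}\big) > 0$ (positivity from (a)). On the other hand $f_2(\sigma) = -\sigma(2+\sigma) = \sigma(2+\sigma) \cdot(-1)$, so $|f_2(\sigma)| \asymp \sigma$ as $\sigma \to 0$, and crucially $f_2(\sigma) \cdot \Delta$ has a fixed sign: since $\Delta > 0$ and $f_2(\sigma) < 0$, the quantity $f_2(\sigma)\cdot\big((\tfrac1{n'^2}+\tfrac1{m'^2}) - (\tfrac1{n^2}+\tfrac1{m^2})\big)$ is negative — wait, the statement writes the RHS as $f_2(\sigma)$ times a positive quantity and calls it positive, so I should match their sign convention: they evidently intend $f_2(\sigma)$ to be read with its sign, and $(\tfrac1{n'^2}+\tfrac1{m'^2}) - (\tfrac1{n^2}+\tfrac1{m^2}) < 0$ by (a) since $n'm' > nm \Rightarrow \tfrac{1}{n'^2}+\tfrac{1}{m'^2} < \tfrac{1}{n^2}+\tfrac{1}{m^2}$; so the product of two negatives is positive, consistent with their remark. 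In any case, $|f_2(\sigma) \cdot \Delta| \asymp \sigma \Delta$, and we have shown the error sum is $O(\sigma^2 \Delta) = \sigma \cdot O(\sigma \Delta) = o_{\sigma \to 0}(\sigma \Delta)$. This is exactly \eqref{eq:Enm+En'm'<f4*diff}, uniformly in $(n,m),(n',m')$ because the implied constants in (b) and in \eqref{eq:err term Gnm} are absolute. I would write out (c) in two or three lines in this fashion, referencing (a) for the sign and (b) for the domination.
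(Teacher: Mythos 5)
Parts (a) and (c) of your proposal are essentially correct and match the paper: the symmetric-function observation $\frac{1}{n^2}+\frac{1}{m^2}=\frac{s}{p}$ disposes of (a) in one line, and (c) is indeed a formal consequence of (b) together with the error bound \eqref{eq:err term Gnm} and $|f_2(\sigma)|\asymp\sigma$.

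Part (b) is where your argument has a genuine gap. You never lower-bound the actual gap
\[
\Delta := \Bigl(\frac{1}{n^{2}}+\frac{1}{m^{2}}\Bigr) - \Bigl(\frac{1}{n'^{2}}+\frac{1}{m'^{2}}\Bigr);
\]
instead you estimate the proxy $\frac{1}{a^2}-\frac{1}{a'^2}$ (with $a=\min(n,m)$, $a'=\min(n',m')$) and then assert ``hence $\ll$ the gap'' on the strength of ``the gap is $\asymp \frac{1}{a^2}-\frac{1}{a'^2}$.'' That comparability is exactly what is \emph{not} available: writing $b=\max(n,m)$, $b'=\max(n',m')$, one has the identity
\[
\Delta = \Bigl(\frac{1}{a^2}-\frac{1}{a'^2}\Bigr)\Bigl(1-\frac{a^2a'^2}{b^2b'^2}\Bigr),
\]
and the second factor can be as small as $\asymp 1/\sqrt{s}$ when both representations sit near the diagonal $n\approx m\approx\sqrt{s/2}$; the ``$1/b^2$ terms are negligible'' step silently assumes the opposite. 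So your chain $\Delta \gtrsim \frac{1}{a^2}-\frac{1}{a'^2} \gg \frac{1}{a^3} \gg \frac{1}{a^4}$ does not close. (Separately, the intermediate bound $\frac{a'+a}{a^2a'^2}\gg\frac{1}{a^3}$ is false when $a'\gg a$ — take $a=1$, $a'$ large — though the conclusion $\frac{1}{a^2}-\frac{1}{a'^2}\gg\frac{1}{a^3}$ is salvageable by a different estimate.) The paper avoids the proxy entirely: it writes $\Delta = K(n'm'-nm)(n'm'+nm)/(nm\,n'm')^2$ with $K=s$, and splits into two regimes. When both $n,n' > \epsilon\sqrt{K}$ (all four coordinates $\asymp\sqrt{K}$), the trivial bound $n'm'-nm\ge 1$ already gives $\Delta\gg K^{-2}\asymp n^{-4}$. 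When $n\le\epsilon\sqrt{K}$, the trivial bound is too weak, and the paper upgrades it to $n'm'-nm\gg\sqrt{K}(n'-n)$ via the mean value theorem applied to $g(y)=y\sqrt{1-y^2}$ on $[0,1/\sqrt2]$; this yields $\Delta\gg (n'-n)/(n^2 n')\gg n^{-3}\ge n^{-4}$. That case split is the missing ingredient, and it is what makes the bound uniform.
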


\begin{proof}[Proof of Proposition \ref{prop:error majorized}]

The first statement \eqref{eq:1/n^2+1/m^2>1/n'^2+1/m'^2} of Proposition \ref{prop:error majorized} is straightforward.
For the second one \eqref{eq:1/n^2+1/m^2-1/n'^2+1/m'^2 large} we denote $K:=n^{2}+m^{2}=n'^{2}+m'^{2}$,
and choose any parameter $0<\epsilon<1$ sufficiently small, whose precise value is irrelevant, except that it will be fixed throughout this proof.
We further assume w.l.o.g. that $n\le m$ and $n'\le m'$
(and $nm<n'm'$), implying in particular that
\begin{equation}
\label{eq:n/sqrt(K)<=2}
n,n'\le \sqrt{\frac{K}{2}} \text{ and } m,m'\ge \sqrt{\frac{K}{2}},
\end{equation}
and $n<n'$.
We write
\begin{equation}
\label{eq:int not vanish=> >=1}
\begin{split}
&\left(\frac{1}{n^{2}}+\frac{1}{m^{2}} - \frac{1}{n'^{2}}-\frac{1}{m'^{2}}\right)
= \left(\frac{K}{n^{2}m^{2}}- \frac{K}{n'^{2}m'^{2}}\right)
\\&=\frac{K}{n^{2}m^{2}n'^{2}m'^{2}}(n'^{2}m'^{2}-n^{2}m^{2})
=\frac{K(nm+n'm')}{n^{2}m^{2}n'^{2}m'^{2}}(n'm'-nm).
\end{split}
\end{equation}

First, assume that both $n,n'>\epsilon \sqrt{K}$. Then, using the trivial bound $n'm'-nm\ge 1$ in
\eqref{eq:int not vanish=> >=1} yields
\begin{equation}
\label{eq:diff>>sum eps}
\begin{split}
&\left(\frac{1}{n^{2}}+\frac{1}{m^{2}} - \frac{1}{n'^{2}}-\frac{1}{m'^{2}}\right)
\ge
\frac{1}{n^{2}n'm'} \gg \frac{\epsilon^{-2}}{n^{4}} \gg \epsilon^{-2}\cdot \left(\frac{1}{n^{4}}+
\frac{1}{m^{4}} + \frac{1}{n'^{4}} + \frac{1}{m'^{4}} \right).
\end{split}
\end{equation}
Otherwise, we assume (w.l.o.g. thanks to the above assumptions) that $n\le \epsilon\sqrt{K}$.
In this case we
can improve upon the trivial lower bound $n'm'-nm\ge 1$ in the following way.

Define
\begin{equation*}
f_{K}(n):= n\cdot \sqrt{K-n^{2}} = K\cdot g(n/\sqrt{K}),
\end{equation*}
where $g(y):=y\cdot \sqrt{1-y^{2}}$ on $y\in [0,1]$ (in fact, in our context, $y\in [0,1/\sqrt{2}]$, see \eqref{eq:n/sqrt(K)<=2}),
and, under the assumptions above, if $n<\epsilon \sqrt{K} $, we have
\begin{equation*}
\begin{split}
n'm'-nm = f_{K}(n')-f_{K}(n) = K\left( g(n'/\sqrt{K}) - g(n/\sqrt{K})   \right)\geq  \frac 12 \sqrt{K}(n'-n)>0,
\end{split}
\end{equation*}
and claim that, assuming that $\epsilon>0$ is sufficiently small (recall that $n\le\epsilon\sqrt{K}$),
\begin{equation}
\label{eq:g diff >> sqrt(K)(n'-n)}
g(n'/\sqrt{K}) - g(n/\sqrt{K})  \ge \frac{1}{10}\sqrt{K}(n'-n)
\end{equation}
so that
\begin{equation}
\label{eq:n'm'-nm nontriv bound}
n'm'-nm \gg \sqrt{K}(n'-n)>0
\end{equation}
improves on the trivial bound.

Indeed, by the mean value theorem, for some $\xi  \in \left(\frac {n}{\sqrt{K}}, \frac{n'}{\sqrt{K}}\right)$,
\begin{equation}
\label{eq:MVT}
 g(n'/\sqrt{K}) - g(n/\sqrt{K}) = \frac{n'-n}{\sqrt{K}} g'(\xi).
\end{equation}
Now, $n/\sqrt{K}<n'/ \sqrt{K}<1/\sqrt{2}$  by \eqref{eq:n/sqrt(K)<=2}, and so $\xi\in \left(0,\frac {1}{\sqrt{2}}\right)$. In this range the derivative $$g'(u) = \frac{1-2u^2}{\sqrt{1-u^2}}$$ is positive and decreasing until it vanishes at $u=\frac{1}{\sqrt{2}}$. The upshot is
that so long as we stay away from this only zero, \eqref{eq:MVT} yields a bound of the desired type \eqref{eq:g diff >> sqrt(K)(n'-n)} (which is why we separately treated the case $n>\epsilon\sqrt{K}$ in the first place). To this end we further subdivide the interval $(0,1/\sqrt{2})$:
first, assuming $\frac{n}{\sqrt{K}}<\frac{n'}{\sqrt{K}}< \frac{1}{2}$ (allowed since $n/\sqrt{K}\le \epsilon$), \eqref{eq:MVT} reads
\begin{equation}
\label{eq:g(n'/sqrt(K)) increasae g'}
g(n'/\sqrt{K}) - g(n/\sqrt{K}) \ge \frac{1}{\sqrt{3}} \cdot \frac{n'-n}{\sqrt{K}},
\end{equation}
since for $\xi\in (0,1/2)$, one has $g'(\xi)\ge g'(1/2) = \frac{1}{\sqrt{3}}$ as $g'(\cdot)$ is decreasing. Otherwise, \eqref{eq:g(n'/sqrt(K)) increasae g'} holds true on the full range $\xi\in (0,1/\sqrt{2})$ with the constant $1/\sqrt{3}$ replaced by a slightly smaller constant
(but still bigger than the $\frac{1}{10}$ claimed in \eqref{eq:g diff >> sqrt(K)(n'-n)}), since $g$ is increasing.

 Inserting the nontrivial bound \eqref{eq:n'm'-nm nontriv bound} into  the r.h.s. of \eqref{eq:int not vanish=> >=1} we have:
\begin{equation*}
\begin{split}
&\left|\frac{1}{n^{2}}+\frac{1}{m^{2}} - \frac{1}{n'^{2}}-\frac{1}{m'^{2}}\right| \gg\frac{K^{3/2}(nm+n'm')(n'-n)}{n^{2}m^{2}n'^{2}m'^{2}} \\&\gg
\frac{(nm+n'm')(n'-n)}{n^{2}n'^{2} m} \gg \frac{(n'-n)}{n^{2}n'}.
\end{split}
\end{equation*}
However,
$$\frac{(n'-n)}{n^{2}n'}\gg \frac{1}{n^{3}},$$
since the ratio of the l.h.s. to the r.h.s. is
\[
\frac{(n'-n)/n^2 n'}{1/n^3} = \frac{n(n'-n)}{n+(n'-n)} = \frac{1}{\frac 1{n'-n}+\frac 1n} \geq \frac 12.
\]
This yields
\begin{equation}
\label{eq:diff>>sum n sm}
\begin{split}
&\left|\frac{1}{n^{2}}+\frac{1}{m^{2}} - \frac{1}{n'^{2}}-\frac{1}{m'^{2}}\right| \gg
\frac{1}{n^{3}} \ge \frac{1}{n^{4}} \\&\gg
\frac{1}{n^{4}}+
\frac{1}{m^{4}} + \frac{1}{n'^{4}} + \frac{1}{m'^{4}}.
\end{split}
\end{equation}

All in all, in either case \eqref{eq:diff>>sum eps} or \eqref{eq:diff>>sum n sm}
yield the second statement \eqref{eq:1/n^2+1/m^2-1/n'^2+1/m'^2 large} of Proposition \ref{prop:error majorized}.
The third statement \eqref{eq:Enm+En'm'<f4*diff} of Proposition \ref{prop:error majorized} follows directly
from \eqref{eq:1/n^2+1/m^2-1/n'^2+1/m'^2 large},
on recalling \eqref{eq:err term Gnm} and \eqref{eq:f2 def}.
\end{proof}


\begin{proposition}
\label{prop:no mult n=0}
There exists $\sigma_{0}>0$ so that for all $\sigma\in (0,\sigma_{0})$, if $n,n',m'\geq 1$ then $\eigen_{n,0}(\sigma)\neq \eigen_{n',m'}(\sigma)$.
\end{proposition}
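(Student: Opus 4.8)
\subsection*{Proof of Proposition~\ref{prop:no mult n=0}: plan}

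The idea is that the level $\eigen_{n,0}$ evolves more slowly near $\sigma=0$ than any level $\eigen_{n',m'}$ with $n',m'\ge 1$: one has $\eigen_{n,0}'(0)=6$ whereas $\eigen_{n',m'}'(0)=8$, the defect coming entirely from the anomalous ground frequency $k_0$, for which $k_0(\sigma)^2=2\sigma+O(\sigma^2)$ grows at half the rate of $k_n(\sigma)^2$ at the origin. Over a short enough interval $(0,\sigma_0)$ this rate gap of $2$ persists uniformly in $n,n',m'$, and that is what prevents a collision.

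Concretely, I would first record the derivative estimates. By Proposition~\ref{prop:Gnm' expand}, uniformly for $n',m'\ge1$ and $0\le\sigma\le1$, one has $\eigen_{n',m'}'(\sigma)=8+O(\sigma)$. For $\eigen_{n,0}(\sigma)=k_n(\sigma)^2+k_0(\sigma)^2$ I treat the summands separately. Implicit differentiation of the secular equation (using it to substitute $\tan(k_j/2)=\sigma/k_j$, resp.\ the odd analogue, as in the proof of Lemma~\ref{lem:deriv eval}) gives, on the secular curve, $(k_j^2)'(\sigma)=2k_jk_j'=\frac{4k_j^2}{k_j^2+\sigma(2+\sigma)}$ for every $j\ge0$; in particular $(k_j^2)'(\sigma)\in(0,4]$ for all $\sigma\in[0,1]$. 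For $j=n\ge1$ we have $k_n\ge n\pi\ge\pi$, so $(k_n^2)'(\sigma)=4+O(\sigma)$ uniformly in $n\ge1$ (equivalently, multiply \eqref{eq:k'/2k asymp} by $k_n$); for $j=0$, inserting $k_0(\sigma)^2=2\sigma+O(\sigma^2)$ from Lemma~\ref{lem:kn sec expand}(c) gives $(k_0^2)'(\sigma)=2+O(\sigma)$. Hence $\eigen_{n,0}'(\sigma)=6+O(\sigma)$ uniformly in $n\ge1$, and both $\eigen_{n,0}'(\sigma),\eigen_{n',m'}'(\sigma)\in(0,8]$ on $[0,1]$.

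Now I would fix $\sigma_0\in(0,1]$ small enough that $\eigen_{n',m'}'(\sigma)-\eigen_{n,0}'(\sigma)\ge1$ for all $n,n',m'\ge1$ and all $\sigma\in(0,\sigma_0)$, and also $\sigma_0<\pi^2/8$; the first requirement is achievable since this difference equals $2+O(\sigma)$ uniformly in the indices. Fix $n,n',m'\ge1$ and $\sigma\in(0,\sigma_0)$; at $\sigma=0$ one has $\eigen_{n,0}(0)=\pi^2n^2$ and $\eigen_{n',m'}(0)=\pi^2(n'^2+m'^2)$. If $n^2\ne n'^2+m'^2$ then $|\eigen_{n,0}(0)-\eigen_{n',m'}(0)|\ge\pi^2$, and since both functions are nondecreasing with derivative at most $8$,
\[
|\eigen_{n,0}(\sigma)-\eigen_{n',m'}(\sigma)|\ge\pi^2-8\sigma>0.
\]
If $n^2=n'^2+m'^2$ then $\eigen_{n,0}(0)=\eigen_{n',m'}(0)$, and
\[
\eigen_{n',m'}(\sigma)-\eigen_{n,0}(\sigma)=\int_0^\sigma\bigl(\eigen_{n',m'}'(t)-\eigen_{n,0}'(t)\bigr)\,dt\ge\sigma>0.
\]
In either case $\eigen_{n,0}(\sigma)\ne\eigen_{n',m'}(\sigma)$, which is the claim.

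The proof is short; the one point requiring a little care is that $k_0$ is excluded from Proposition~\ref{prop:Gnm' expand} and Lemma~\ref{lem:deriv eval} (it fails to be analytic at $\sigma=0$) and must be handled directly through $k_0(\sigma)^2=2\sigma+O(\sigma^2)$ --- and this is exactly where the rate $6$, rather than $8$, enters, which is the whole content of the proposition. Beyond that one only needs to check that all the error terms above are uniform in $n,n',m'$, which they are by construction of the cited estimates.
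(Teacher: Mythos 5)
Your argument is correct and is essentially the same as the paper's: both hinge on the uniform expansions $\eigen_{n',m'}(\sigma)=\pi^2(n'^2+m'^2)+8\sigma+O(\sigma^2)$ and $\eigen_{n,0}(\sigma)=\pi^2 n^2+6\sigma+O(\sigma^2)$ (the $6$ versus $8$ coming from the anomalous $k_0$), then split into the cases $n^2\neq n'^2+m'^2$ and $n^2=n'^2+m'^2$. You phrase it by integrating a uniform derivative estimate over $(0,\sigma)$ rather than comparing the Taylor expansions directly at $\sigma$, but this is a cosmetic difference.
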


\begin{proof}

It follows from Proposition \ref{prop:Gnm' expand} that
\begin{equation}
\label{eq:exp G around square}
\eigen_{n',m'}(\sigma) =  \pi^2(n'^2+m'^2)+8\sigma-\left( \frac{1}{n'^{2}}+\frac{1}{m'^{2}} \right)\cdot (\sigma^{2}+O(\sigma^{3}))
=  \pi^2(n'^2+m'^2)+8\sigma+O(\sigma^{2}),
\end{equation}
where the contribution of the error term $E_{(n',m')}$ is absorbed inside the $O(\sigma^{3})$, and
\begin{equation}
\label{eq:exp G0m square}
\eigen_{n,0} = \pi^2n^2  +6\sigma+ O(\sigma^{2}),
\end{equation}
with the constant involved in the $`O'$-notation in both \eqref{eq:exp G around square} and \eqref{eq:exp G0m square} absolute.
Hence, for $\sigma>0$ sufficiently small, if $\eigen_{n,0}(\sigma) = \eigen_{n',m'}(\sigma)$ then necessarily  $n'^{2}+m'^{2}=m^2$.

\vspace{2mm}

Next, if $n'^{2}+m'^{2}=n^2$ then from \eqref{eq:exp G around square} and \eqref{eq:exp G0m square} we obtain
\begin{equation*}
\label{eq:exp axis dominated}
\eigen_{n,0}(\sigma) = k_{0}(\sigma)^{2}+k_{m}(\sigma)^{2}<k_{n'}(\sigma)^{2}+k_{m'}(\sigma)^{2}=\eigen_{n',m'}(\sigma).
\end{equation*}
It follows trivially that for all $\sigma>0$, $(n,m)\ne (0,0)$, one has
$\eigen_{0,0}(\sigma)<\eigen_{n,m}(\sigma)$.
\end{proof}

\begin{proof}[Proof of Theorem \ref{thm:no multiplicities sqr}]

The statement of Theorem \ref{thm:no multiplicities sqr} is equivalent to having no relations $$\eigen_{n,m}(\sigma)=\eigen_{n',m'}(\sigma),$$
for $\sigma$ sufficiently small, $(n,m)\ne (n',m')$, where, once again, we assume w.l.o.g. that $n\le m$, $n'\le m'$ (recall that for
$L=1$, $\eigen_{n,m}(\cdot)=\eigen_{m,n}(\cdot)$). By Proposition
\ref{prop:no mult n=0}, we may further assume that $n,m,n',m'\ge 1$,
so use Proposition \ref{prop:Gnm' expand} to write
\begin{equation}
\label{eq:Gnm' expand err}
\eigen_{n,m}'(\sigma)=4\left(2+\frac{f_{2}(\sigma)}{\pi^{2}}\left(\frac{1}{n^{2}}+\frac{1}{m^{2}} \right)\right)+E_{(n,m)}(\sigma),
\end{equation}
with error term given by \eqref{eq:err term Gnm}.
Using Lemma \ref{lem:deriv eval}(b) we compute 
\begin{equation}\label{eq:Gnm''(0) eval}
\eigen_{n,m}''(0) = -8\left(\frac{1}{(\pi n)^{2}} +  \frac{1}{(\pi m)^{2}}  \right).
\end{equation}  
Writing the analogue of \eqref{eq:Gnm''(0) eval} for $(n',m')$ in place of $(n,m)$, and together with Proposition~\ref{prop:error majorized}(a), we deduce
that for $(n,m)$ and $(n',m')$ with $n^{2}+m^{2}=n'^{2}+m'^{2}$
and $n'm'>nm$, there exists some (a priori dependent on $(n,m)$ and $(n',m')$) neighbourhood of the origin so that
\begin{equation*}
\eigen_{n',m'}(\sigma)>\eigen_{n,m}(\sigma).
\end{equation*}

To make this neighbourhood absolute, we compare the expansions \eqref{eq:Gnm' expand err} of $\eigen_{n,m}'(\cdot)$
for $(n,m)$ and $(n',m')$ with $n^{2}+m^{2}=n'^{2}+m'^{2}$. We have
$\eigen_{n,m}'(0)= \eigen_{n',m'}'(0)$, and Proposition \ref{prop:error majorized}(a) and (c), bearing in mind that $f_{2}(\sigma)<0$
for all $\sigma>0$, implies that there exists some absolute
$\sigma_{0}>0$ so that
\begin{equation*}
\eigen_{n,m}'(\sigma)< \eigen_{n',m'}'(\sigma)
\end{equation*}
on $\sigma\in (0,\sigma_{0}]$, which concludes the proof of Theorem \ref{thm:no multiplicities sqr} for $(n,m)$ and $(n',m')$ on the same circle.

Finally, if $(n,m)$ and $(n',m')$ are not on the same circle, then \eqref{eq:Gnm asymp nm} shows that
$\eigen_{n,m}'(\cdot)-\eigen_{n',m'}'(\cdot)$ is bounded by an absolute constant around the origin (any bound $B>0$ could be taken for
sufficiently small neighbourhood of the origin).
Therefore, since $$\left|\eigen_{n,m}(\cdot)-\eigen_{n',m'}(\cdot)\right| \ge 1,$$ for $\sigma>0$ sufficiently small,
$\eigen_{n,m}(\cdot)-\eigen_{n',m'}(\cdot)$ maintains its sign.
\end{proof}

\subsection{Existence of spectral degeneracies} 
\begin{proposition}
\label{prop:mult (3,4),(1,5)}
There exist a number $\sigma>0$ so that $\eigen_{3,4}(\sigma)=\eigen_{1,5}(\sigma)$.
\end{proposition}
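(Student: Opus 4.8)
The plan is to exhibit a value of $\sigma$ at which the two energies coincide by an intermediate value (continuity) argument, using the known behaviour of $\eigen_{n,m}(\sigma)$ at the two endpoints $\sigma=0$ and $\sigma=\infty$. Set
\[
D(\sigma):=\eigen_{3,4}(\sigma)-\eigen_{1,5}(\sigma)=\bigl(k_3(\sigma)^2+k_4(\sigma)^2\bigr)-\bigl(k_1(\sigma)^2+k_5(\sigma)^2\bigr).
\]
At $\sigma=0$ both $(3,4)$ and $(1,5)$ lie on the circle $n^2+m^2=25$, so by \eqref{eq:kn(0)=pi n} we have $\eigen_{3,4}(0)=\eigen_{1,5}(0)=25\pi^2$ and hence $D(0)=0$. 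The first step is therefore to determine the sign of $D(\sigma)$ for small $\sigma>0$: by Proposition~\ref{prop:error majorized}(a), since $3\cdot 4=12<1\cdot 5=5$ is \emph{false}, actually $nm=12>n'm'=5$, so $\tfrac{1}{3^2}+\tfrac{1}{4^2}<\tfrac{1}{1^2}+\tfrac{1}{5^2}$, and by the argument in the proof of Theorem~\ref{thm:no multiplicities sqr} (comparing $\eigen'_{3,4}$ and $\eigen'_{1,5}$ via \eqref{eq:Gnm asymp nm}, using $f_2(\sigma)<0$) one gets $D(\sigma)>0$ for small $\sigma>0$; equivalently one reads this off \eqref{eq:Gnm''(0) eval} since $\eigen''_{3,4}(0)>\eigen''_{1,5}(0)$ while $D(0)=D'(0)=0$.

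The second step is to show $D(\sigma)<0$ for some larger $\sigma$ (e.g. as $\sigma\to\infty$). By Lemma~\ref{lem:kn sec expand}(a), $k_n(\sigma)\to (n+1)\pi$ as $\sigma\to\infty$, so
\[
\eigen_{3,4}(\infty)=(4^2+5^2)\pi^2=41\pi^2,\qquad \eigen_{1,5}(\infty)=(2^2+6^2)\pi^2=40\pi^2,
\]
giving $D(\infty)=\pi^2>0$ — so the naive endpoint comparison at infinity does \emph{not} produce a sign change, and one must instead locate an intermediate $\sigma$ where $D<0$. The cleanest route is to exploit the non-monotone, interval-by-interval behaviour described in Lemma~\ref{lem:kn sec expand}(b): each $k_n(\sigma)$ crosses $(n+\tfrac12)\pi$ exactly when $\sigma=(n+\tfrac12)\pi$. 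Thus at, say, $\sigma$ slightly above $\tfrac{3}{2}\pi$ (resp.\ $\tfrac{9}{2}\pi$) the frequency $k_1$ (resp.\ $k_5$) has already jumped past its midpoint while $k_3,k_4$ have not, which tends to make $\eigen_{1,5}$ larger than $\eigen_{3,4}$ there. Concretely I would pick an explicit $\sigma^\ast$ in a convenient range (the paper's later numerics suggest something of moderate size), use the secular equation \eqref{eq:secular gen R} to bound each of $k_1,k_3,k_4,k_5$ at $\sigma^\ast$ from above/below by elementary estimates (or quote monotonicity in $\sigma$ to reduce to a single evaluation), and thereby certify $D(\sigma^\ast)<0$.

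With $D(0^+)>0$ and $D(\sigma^\ast)<0$ established, continuity of each $k_n(\cdot)$ (hence of $D$) on $(0,\infty)$, guaranteed by Lemma~\ref{lem:kn sec expand}, gives a zero of $D$ in $(0,\sigma^\ast)$ by the intermediate value theorem, which is exactly the asserted $\sigma$ with $\eigen_{3,4}(\sigma)=\eigen_{1,5}(\sigma)$. The main obstacle is the middle step: one needs a genuinely effective lower bound on $\eigen_{1,5}(\sigma^\ast)-\eigen_{3,4}(\sigma^\ast)$, i.e.\ reasonably sharp two-sided control of the four solutions of the transcendental equation \eqref{eq:secular gen R} at the chosen point; all the estimates are elementary (monotonicity of $S_\pm$, the position of the midpoints), but choosing $\sigma^\ast$ so that the inequalities close cleanly — rather than relying on a numerically computed crossing — is where the care is required. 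An acceptable and shorter alternative, if one is willing to invoke a machine computation, is simply to evaluate $D$ at one explicit $\sigma$ where it is numerically negative and cite continuity; I would present the IVT structure either way.
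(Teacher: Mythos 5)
Your argument contains a genuine arithmetic error at the very start that cascades through the rest. You claim that both $(3,4)$ and $(1,5)$ lie on the circle $n^2+m^2=25$, but $1^2+5^2=26\neq 25$. Consequently $D(0)=\eigen_{3,4}(0)-\eigen_{1,5}(0)=25\pi^2-26\pi^2=-\pi^2<0$, not $0$, and Proposition~\ref{prop:error majorized}(a) (which requires $n^2+m^2=n'^2+m'^2$) simply does not apply to this pair. Your appeal to the second-derivative comparison at the origin is therefore vacuous, and the ``small-$\sigma$'' sign you infer for $D$ is wrong.

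Once $D(0)=-\pi^2<0$ is in hand, the rest of the proof is immediate and matches the paper: you correctly computed $D(\infty)=41\pi^2-40\pi^2=\pi^2>0$, so $D$ changes sign, and by continuity of each $k_n(\cdot)$ (Lemma~\ref{lem:kn sec expand}) the intermediate value theorem gives a $\sigma>0$ with $\eigen_{3,4}(\sigma)=\eigen_{1,5}(\sigma)$. The entire middle portion of your proposal --- locating an intermediate $\sigma^\ast$ with $D(\sigma^\ast)<0$ by tracking the midpoint crossings in Lemma~\ref{lem:kn sec expand}(b), with the attendant worry about effective two-sided bounds on solutions of the transcendental secular equation --- is an artifact of the miscalculation and is not needed. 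In fact the whole point of the example $(3,4)$ vs.\ $(1,5)$ is precisely that the Neumann ordering ($25<26$) and the Dirichlet ordering ($41>40$) are opposite, so that the naive endpoint comparison works; this is what makes the proposition a one-line IVT argument rather than requiring the quantitative analysis you were bracing for.
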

\begin{proof}

By Lemma \ref{lem:kn sec expand}(a), and recalling the notation \eqref{eq:Gn,m as g-}, we have
$\eigen_{3,4}(0) =  25\pi^{2}$ and $\eigen_{1,5}(0) = 26\pi^{2}$,
whereas
$\eigen_{3,4}(+\infty) = 41\pi^{2} $ and $\eigen_{1,5}(+\infty) = 40\pi^{2}$.
Therefore, the continuous function $\sigma\mapsto\eigen_{3,4}(\sigma) - \eigen_{1,5}(\sigma)$ changes  sign, and
so, by the Intermediate Value theorem, it  vanishes at some $\sigma>0$, i.e. $\eigen_{3,4}(\sigma)= \eigen_{1,5}(\sigma)$,
as claimed.
\end{proof}

\section{Spectral degeneracies for rectangles: proof of theorems \ref{thm:no multiplicities rectangle}-\ref{thm:bnded deg bad approx}}

\subsection{Existence of multiplicities for irrational $L^{2}$}

The following theorem asserts that, on recalling the notation \eqref{eq:lambda_mnL def},
there exist relations of the type
\begin{equation}
\label{eq:spec degen only one 0}
\eigen_{L;n,m}(\sigma) = \eigen_{L;0,m'}(\sigma),
\end{equation}
with $\sigma>0$ arbitrarily small,
and $n,m,m'\ge 1$ (depending on $\sigma$). This in particular implies Theorem \ref{thm:no multiplicities rectangle}.
For some $L>0$, spectral degeneracies of the type \eqref{eq:spec degen only one 0} subject to $n,m,m'\ge 1$ are the only degeneracies, at least for $\sigma>0$ sufficiently small, see Theorem \ref{thm:L diopant no mult} below.

\begin{theorem}
\label{thm:mult axes}
Let $ L^{2} $ be a positive irrational number. Then there exists a sequence of Robin parameters $\sigma_{j}\searrow 0$ and triples of positive integers $n,m,m'\geq 1 $ (depending on $\sigma_j$), so that
\begin{equation}
\label{eq:Gnm=G0m'}
\eigen_{L;n,m}(\sigma) = \eigen_{L;0,m'}(\sigma).
\end{equation}
\end{theorem}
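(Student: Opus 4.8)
The plan is to reduce \eqref{eq:Gnm=G0m'} to finding, for arbitrarily small $\sigma>0$, a triple $(n,m,m')$ of positive integers solving an Oppenheim-type inequality for the indefinite ternary form $Q(x,y,z)=L^2x^2+y^2-z^2$, and then to invoke the (elementary) density result of Hardy--Littlewood on the fractional parts of $L^2n^2\bmod 1$ rather than the full Oppenheim/Margulis machinery. First I would expand all three frequencies near $\sigma=0$. Writing $\eigen_{L;n,m}(\sigma)=k_n(\sigma)^2+\frac1{L^2}k_m(\sigma L)^2$, and using Lemma \ref{lem:kn sec expand}(a),(c) together with \eqref{eq:k0 asympt 2 terms}, one has $k_n(\sigma)^2=\pi^2n^2+4\sigma+O(\sigma^2)$ for $n\ge1$, while $k_0(\sigma L)^2=2\sigma L+O(\sigma^2)$, so $\frac1{L^2}k_0(\sigma L)^2=\frac{2\sigma}{L}+O(\sigma^2)$. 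Hence
\[
\eigen_{L;n,m}(\sigma)-\eigen_{L;0,m'}(\sigma)
=\pi^2\Big(n^2+\frac{m^2}{L^2}\Big)-\frac{\pi^2 m'^2}{L^2}
+\Big(4+\frac{4}{L^2}-\frac{2}{L}-\frac{4}{L^2}\Big)\sigma+O(\sigma^2),
\]
so the leading-order term in $\sigma$ is a fixed nonzero linear function of $\sigma$ (explicitly $(4-\tfrac2L)\sigma$, which we may assume nonzero, else adjust by symmetry; in any case its sign is fixed independent of $(n,m,m')$), while the ``Neumann part'' is $\frac{\pi^2}{L^2}\big(L^2n^2+m^2-m'^2\big)=\frac{\pi^2}{L^2}Q(n,m,m')$.

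Next I would arrange a sign change. Given a target small $\sigma_0>0$: by the Hardy--Littlewood theorem \cite{HL} the fractional parts $\{L^2n^2\}$ are dense (indeed equidistributed) mod $1$, so for any $\delta>0$ there is $n\ge1$ and an integer $t\ge1$ with $0<L^2n^2-t<\delta$; writing $t=m'^2-m^2$ for suitable $m'>m\ge1$ (here one should pick $m'^2-m^2$ to realize a prescribed integer value — since differences of squares $m'^2-m^2=(m'-m)(m'+m)$ hit every integer that is not $\equiv 2\bmod 4$, one restricts $t$ to such residues, or more simply takes $m'=m+1$ so $t=2m+1$ runs over all odd integers $\ge3$, and applies Hardy--Littlewood along the subsequence where $\lfloor L^2n^2\rfloor$ or a nearby integer is odd). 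Then $Q(n,m,m')=L^2n^2-t\in(0,\delta)$, so $\eigen_{L;n,m}(0)-\eigen_{L;0,m'}(0)=\frac{\pi^2}{L^2}Q(n,m,m')\in(0,\tfrac{\pi^2\delta}{L^2})$ is positive but as small as we like, whereas the $O(\sigma)$ coefficient has a fixed sign; choosing $\delta$ small enough relative to the $\sigma$-coefficient forces the continuous function $\sigma\mapsto \eigen_{L;n,m}(\sigma)-\eigen_{L;0,m'}(\sigma)$ to change sign on $(0,\sigma_0)$, hence to vanish at some $\sigma_j\in(0,\sigma_0)$ by the Intermediate Value theorem. Letting $\sigma_0\searrow0$ produces the desired sequence $\sigma_j\searrow 0$.

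The main obstacle is the interplay between the two arithmetic constraints: we need $Q(n,m,m')$ to be \emph{positive and arbitrarily small} (so that the fixed-sign linear-in-$\sigma$ term dominates and produces a sign change near $0$), \emph{and} the three variables to be positive integers with $n,m,m'\ge1$. The positivity and smallness of $L^2n^2-(m'^2-m^2)$ is exactly what Hardy--Littlewood density gives, but one must make sure the integer $m'^2-m^2$ can be taken just below $L^2n^2$; the cleanest route is the $m'=m+1$ reduction turning this into approximating $L^2n^2$ from below by odd integers $2m+1$, which is handled by equidistribution of $\{L^2n^2\}$ together with the fact that, along any long run of $n$, both even and odd nearest integers below $L^2n^2$ occur. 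One should also double-check the degenerate case $L=1/2$ where the $\sigma$-coefficient $4-\tfrac2L$ vanishes: there one instead keeps the $O(\sigma^2)$ term (whose coefficient is again a bounded, eventually sign-definite function of $(n,m,m')$ by Proposition \ref{prop:Gnm' expand} and the expansion of $k_0$), or simply notes $L=1/2$ means $L^2=1/4$ is rational, excluded by hypothesis. A final minor point: Theorem \ref{thm:no multiplicities rectangle} follows immediately, since \eqref{eq:Gnm=G0m'} exhibits a genuine multiplicity (the pairs $(n,m)$ and $(0,m')$ are distinct as $n\ge1$) at arbitrarily small $\sigma$.
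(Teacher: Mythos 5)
Your overall strategy coincides with the paper's: expand near $\sigma=0$, reduce to an Oppenheim-type inequality for the ternary form $L^2x^2+y^2-z^2$ at positive integers, solve it using Hardy--Littlewood density of $\{L^2n^2 \bmod 1\}$, and finish with the Intermediate Value Theorem. However, there is a sign error that causes the IVT step to fail as written. The linear-in-$\sigma$ coefficient is miscomputed: for $m\ge1$,
$\frac{d}{d\sigma}\bigl[\tfrac{1}{L^2}k_m(\sigma L)^2\bigr]\big|_{\sigma=0}
=\tfrac{1}{L^2}\cdot 2\pi m\cdot L\cdot\tfrac{2}{\pi m}=\tfrac{4}{L}$
(you wrote $\tfrac{4}{L^2}$, dropping the chain-rule factor of $L$), and in $\eigen_{L;0,m'}(\sigma)=k_0(\sigma)^2+\tfrac1{L^2}k_{m'}(\sigma L)^2$ the zero index sits in the unit-length direction, so the relevant term is $k_0(\sigma)^2=2\sigma+O(\sigma^2)$, not $\tfrac1{L^2}k_0(\sigma L)^2=\tfrac{2\sigma}{L}+O(\sigma^2)$. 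Correcting both, the coefficient is $(4+\tfrac4L)-(2+\tfrac4L)=2$, independent of $L$; the side remark about $L=\tfrac12$ is therefore moot.

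The critical consequence: since the $\sigma$-coefficient is $+2>0$, you need the Neumann defect $\eigen_{L;n,m}(0)-\eigen_{L;0,m'}(0)=\tfrac{\pi^2}{L^2}Q(n,m,m')$ to be \emph{negative} and small, so that the difference crosses zero as $\sigma$ increases. You arranged $Q\in(0,\delta)$, i.e.\ positive, in which case both the defect and the $\sigma$-term are positive, the difference stays positive for all small $\sigma>0$, and there is no zero. The paper arranges $-\epsilon<L^2n^2+m^2-m'^2<0$; the fix is equally available from Hardy--Littlewood (take $L^2n_1^2-j$ just below $0$ rather than just above). A secondary, more minor point: your $m'=m+1$ reduction forces $m'^2-m^2$ to be odd, requiring an extra parity argument that you only sketch; the paper sidesteps this by multiplying the HL inequality by $4$ and using $4j=(j+1)^2-(j-1)^2$, which makes every value automatically a difference of squares.
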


The following result will be required towards giving a proof of Theorem \ref{thm:mult axes}.

\begin{lemma}\label{lem:Oppenheim}
Let $\theta>0$ be a positive irrational number. For every $  \epsilon>0$, there are positive integer solutions $n,m,m'>0$ of the inequality
\begin{equation}
\label{eq:Q(n,m,m') in (-eps,0)}
-\epsilon< n^{2}\theta+m^{2}-m'^{2}  <0  .
\end{equation}
\end{lemma}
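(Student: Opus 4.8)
The plan is to deduce Lemma~\ref{lem:Oppenheim} from the classical equidistribution result of Hardy and Littlewood on the fractional parts of $n^2\theta \bmod 1$, rather than invoking the full strength of Oppenheim/Margulis. The key observation is that if we simply take $m=m'$, the inequality $-\epsilon < n^2\theta + m^2 - m'^2 < 0$ degenerates to $-\epsilon < n^2\theta < 0$, which has no solutions since $\theta>0$; so the roles of $m$ and $m'$ must genuinely differ, and the point is to use the gap $m'^2-m^2 = (m'-m)(m'+m)$ to absorb the positive quantity $n^2\theta$ up to a small error. Writing $m' = m+r$ with $r\ge 1$, we have $m'^2 - m^2 = 2mr + r^2$, so the target inequality becomes
\[
-\epsilon < n^2\theta - 2mr - r^2 < 0,
\]
i.e. we want $2mr + r^2$ to lie just above $n^2\theta$, within $\epsilon$.

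First I would fix $r=1$ for simplicity, so that the target is $-\epsilon < n^2\theta - 2m - 1 < 0$, i.e. $2m+1 \in (n^2\theta, n^2\theta + \epsilon)$. Since $2m+1$ ranges over all odd integers as $m$ ranges over positive integers, this asks: does the interval $(n^2\theta, n^2\theta+\epsilon)$ contain an odd integer, for some $n$? Equivalently, writing $n^2\theta = 2\lfloor n^2\theta/2\rfloor + \{n^2\theta\}_2$ where $\{x\}_2$ denotes $x$ reduced mod $2$ into $[0,2)$, I want $\{n^2\theta\}_2$ to lie slightly below an odd integer, i.e. in $(1-\epsilon,1)\cup(2-\epsilon,2)$. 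By the Hardy--Littlewood theorem \cite{HL}, the sequence $n^2\theta \bmod 2$ (equivalently $(n^2\theta/2)\bmod 1$, using that $\theta$ irrational forces $\theta/2$ irrational) is equidistributed mod $2$; in particular it is dense, so there are infinitely many $n$ with $\{n^2\theta\}_2 \in (2-\epsilon, 2)$, say. Pick such an $n$ (and we may take $n$ as large as we like, which is convenient for the application in Theorem~\ref{thm:mult axes} where $\sigma_j\to 0$ forces the integers to grow). Then set $m = (\,\lfloor n^2\theta\rfloor - 1\,)/2$ if that is a positive integer of the right parity, adjusting by the definition of the floor so that $2m+1$ is exactly the odd integer just above $n^2\theta$; concretely $2m+1 = 2\lceil (n^2\theta-1)/2\rceil + 1$, and one checks $0 < 2m+1 - n^2\theta < \epsilon$ from $\{n^2\theta\}_2 \in (2-\epsilon,2)$. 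Finally set $m'=m+1$; then $n,m,m'$ are positive integers (positivity of $m$ holds once $n$ is large) and $n^2\theta + m^2 - m'^2 = n^2\theta - (2m+1) \in (-\epsilon, 0)$, as required.

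The one genuinely delicate point — and the place where I would be most careful — is not the density argument itself (Hardy--Littlewood gives even more, full equidistribution), but rather ensuring all three integers are \emph{strictly positive} simultaneously and that $m'\ne m$; this is why one wants to choose $n$ large, since then $n^2\theta$ is large and the associated $m \sim n^2\theta/2$ is automatically a large positive integer. A secondary routine check is that $\theta$ irrational implies $\theta/2$ irrational so that Hardy--Littlewood applies to $(\theta/2)n^2 \bmod 1$; this is immediate. One could alternatively avoid fixing $r=1$ and instead let both $m$ and $r$ vary, which gives more flexibility, but fixing $r=1$ already suffices and keeps the argument to essentially one line of equidistribution plus bookkeeping. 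I expect the writeup to be short: state the reduction to finding an odd integer in $(n^2\theta, n^2\theta+\epsilon)$, invoke density of $\{n^2\theta \bmod 2\}$ from \cite{HL}, and read off $m,m'$.
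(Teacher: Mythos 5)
Your strategy is sound and, at bottom, of the same nature as the paper's proof: both deduce the lemma from the Hardy--Littlewood density of $\{n^2\theta\bmod 1\}$ rather than from the full Oppenheim/Margulis machinery. But the decompositions are genuinely different. You fix $m'=m+1$, so $m'^2-m^2 = 2m+1$, and you need the odd integer just above $n^2\theta$ to be within $\epsilon$; since you are targeting odd integers, this forces you to work mod $2$, i.e.\ to apply Hardy--Littlewood to $(\theta/2)n^2\bmod 1$. The paper instead finds $n_1$ with $\theta n_1^2$ within $\epsilon/4$ below an integer $j$, multiplies the whole inequality by $4$, and reads off $n=2n_1$, $m=j-1$, $m'=j+1$ using $(j+1)^2-(j-1)^2=4j$; this sidesteps parity entirely and uses only the standard mod-$1$ statement. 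Both routes give arbitrarily large solution triples, as needed in Theorem~\ref{thm:mult axes}.

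There is one concrete slip in your writeup, in the choice of interval. You write that you want $\{n^2\theta\}_2\in(1-\epsilon,1)\cup(2-\epsilon,2)$, ``slightly below an odd integer,'' and then pick $n$ with $\{n^2\theta\}_2\in(2-\epsilon,2)$. But $(2-\epsilon,2)$ corresponds to $n^2\theta$ being just below an \emph{even} integer: if $n^2\theta=2k+t$ with $t\in(2-\epsilon,2)$, the nearest odd integer above is $2k+3$, at distance $3-t\in(1,1+\epsilon)$, not within $\epsilon$. Only $(1-\epsilon,1)$ works: then $n^2\theta=2k+t$ with $t\in(1-\epsilon,1)$, the odd integer $2m+1=2k+1$ satisfies $0<2m+1-n^2\theta=1-t<\epsilon$, and $m=k$ is a positive integer once $n$ is large. (Your ``concrete'' formula $2m+1=2\lceil (n^2\theta-1)/2\rceil+1$ already reproduces this; the earlier expression $m=(\lfloor n^2\theta\rfloor-1)/2$ has the wrong parity when $\{n^2\theta\}_2\in(1-\epsilon,1)$, since $\lfloor n^2\theta\rfloor=2k$ is then even.) With $(1-\epsilon,1)$ substituted for $(2-\epsilon,2)$ throughout, your argument is complete.
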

\begin{proof}
For any irrational $\theta$, Hardy and Littlewood \cite{HL} proved in 1914 that the sequence of fractional parts $\{\theta n^2 \bmod 1:n=1,2,\dots\}$ is dense in the unit interval $[0,1)$ (improved to uniform distribution by Weyl shortly afterwards). Thus there are $n_1\gg 1$  and $j=j(n_1)\gg 1$ for which
$$
-\frac{\epsilon}{4} <\theta n_1^2 - j <0 .
$$
Multiplying by $4$ we obtain
$$
-\epsilon <\theta(2n_{1})^2 -4j <0 .
$$
Let
$$
n=2n_1,\quad m=j-1,\quad m'=j+1,,
$$
(which are positive). Then $m'^2-m^2 = 4j$, and we obtain
$$
-\epsilon <\theta n^2 +m^2-m'^2 <0
$$
with $n,m,m'>0$, as required.
\end{proof}

\begin{remark}
The proof of Lemma \ref{lem:Oppenheim} constructs infinitely many triples satisfying \eqref{eq:Q(n,m,m') in (-eps,0)}.
\end{remark}

\begin{proof}[Proof of Theorem \ref{thm:mult axes}]
Take any sequence $\epsilon_{j}\rightarrow 0$,
and find a triple of positive integers $(n,m,m')$ (depending on $\epsilon_j$) as in Lemma \eqref{lem:Oppenheim}, so that we have,
for $\theta=L^2$,
\begin{equation}
\label{eq:Gnm(0)-G0m'(0)>-epsk}
-\epsilon_{j} < \eigen_{L;n,m}(0) - \eigen_{L;0,m'}(0)= \frac{\pi^{2}}{\theta}\cdot \Big(  n^{2}\theta+m^{2}-m'^{2}  \Big)< 0.
\end{equation}
Next note that, for every $L>0$ and integers $n,m\geq 1$, one has
\begin{equation}
\label{eq:Galpha rect err}
\begin{split}
\eigen_{L;n,m}'(\sigma) = 4\left( \left(1+\frac{1}{L} \right) +\frac{1}{\pi^{2}}\left( \frac{f_{2}(\sigma)}{n^{2}} + \frac{f_{2}(L\cdot \sigma)}{L m^{2}} \right)  \right) + E_{(n,m)}(\sigma),
\end{split}
\end{equation}
where the error term is still bounded by \eqref{eq:err term Gnm}.
Indeed, in accordance with Lemma \ref{lem:deriv eval}, one has
\begin{equation*}
\left({k_{L;n}(\sigma)^{2}}\right)' = \frac{1}{L}\left({k_{n}(\cdot)^{2}}\right)'|_{L \sigma}
=\frac{4}{L}\left(1+\frac{f_{2}(L \sigma)}{\pi ^{2} n^{2}}\right)+O\left(\frac{\sigma^{4}}{n^{4}}\right),
\end{equation*}
and the rest follows from the definition of $\eigen_{L;n,m}(\cdot)$.

Now comparing \eqref{eq:Galpha rect err} to \eqref{eq:k0 asympt 2 terms},
we have the expansions: 
\begin{equation*}
\eigen_{L;n,m}(\sigma) = \eigen_{L;n,m}(0) +4\sigma\cdot \left(1+\frac{1}{L} \right) +O(\sigma^{2})
\end{equation*}
and
\begin{equation}
\label{eq:RNgap0,m' asympt}
\eigen_{L;0,m'}(\sigma) = \eigen_{L;0,m'}(0)+\sigma\cdot\left(2+\frac{4}{L}\right) + O(\sigma^{2}).
\end{equation}
Therefore, the difference between the Robin eigenvalues is given by
\begin{equation}
\label{eq:Gnm(R)-G0m'(R) exp}
\eigen_{L;n,m}(\sigma) - \eigen_{L;0,m'}(\sigma) = \eigen_{L;n,m}(0) - \eigen_{L;0,m'}(0) + 2\sigma + O(\sigma^{2}).
\end{equation}
In particular, if we choose $\sigma=\epsilon_{j}$, \eqref{eq:Gnm(0)-G0m'(0)>-epsk} with \eqref{eq:Gnm(R)-G0m'(R) exp} together imply that,
for $j$ sufficiently large,
\begin{equation*}
\eigen_{L;n,m}(\sigma) - \eigen_{L;0,m'}(\sigma)  > 0.
\end{equation*}
Therefore, by the Intermediate Value Theorem, there is some $\sigma_{j}\in (0,\epsilon_{j})$ so that
the equality \eqref{eq:Gnm=G0m'} holds, which is the claimed multiplicity.
\end{proof}

\subsection{A bound on multiplicities for badly approximable $L^{2}$}

Theorem \ref{thm:L diopant no mult}(a) asserts that if $\theta:=L^{2}$ is badly approximable in the sense of \eqref{eq:badly approx def},
then the only possible spectral degeneracies are either the type $\eigen_{L;n,m}(\sigma)=\eigen_{L;n',0}(\sigma)$ or
$\eigen_{L;n,m}(\sigma)=\eigen_{L;0,m'}(\sigma)$ for some $n,m,n',m'\ge 0$. Theorem \ref{thm:L diopant no mult}(b)-(c) will deduce
the bound for the spectral degeneracies claimed as part of Theorem \ref{thm:bnded deg bad approx}.

\begin{theorem}
\label{thm:L diopant no mult}
Asume that $L^2$ is badly approximable.

\begin{enumerate}[a.]

\item
For $\sigma_{0}>0$ sufficiently small, for
all $\sigma\in [0,\sigma_{0}]$ there are no spectral multiplicities $\eigen_{L;n,m}=\eigen_{L;n',m'}$ for $(n,m)\neq (n',m')$, with all $n,m,n',m'\geq 1$.

\item For $\sigma\in [0,\sigma_{0}]$ sufficiently small all multiplicities are bounded by $3$, i.e.
all eigenspaces are of dimension at most $3$.

\item If, in addition, $L$ is badly approximable, then all multiplicities
are bounded by $2$.

\end{enumerate}

\end{theorem}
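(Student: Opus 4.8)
The plan is to follow the same strategy as in the square case (Theorem \ref{thm:no multiplicities sqr}), but with the arithmetic input replaced by the badly approximable hypothesis, and then to count how many lattice points can pile up on a single "circle" of the relevant indefinite quadratic form. For part (a), suppose $\eigen_{L;n,m}(\sigma) = \eigen_{L;n',m'}(\sigma)$ with all four indices $\ge 1$. Evaluating at $\sigma = 0$ forces $\pi^2(n^2 + m^2/L^2) = \pi^2(n'^2 + m'^2/L^2)$, i.e. the indefinite form $\theta(n^2 - n'^2) = m'^2 - m^2$ must vanish, where $\theta = L^2$. If the pair is already equal at $\sigma=0$ then $(n^2-n'^2, m^2-m'^2) = (0,0)$, and since all indices are positive this gives $(n,m)=(n',m')$, contradicting the hypothesis; so any genuine coincidence at $\sigma=0$ of two distinct lattice points with all coordinates nonzero would require $\theta = (m'^2-m^2)/(n^2-n'^2)$ to be rational, which is false. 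Hence for $\sigma=0$ there are no coincidences among points with all indices $\ge 1$ other than the trivial one, and the argument of Theorem \ref{thm:no multiplicities sqr} — comparing the derivatives $\eigen_{L;n,m}'(\sigma)$ via the expansion \eqref{eq:Galpha rect err}, using that the "mean" term $4(1+1/L)$ is common and the correction terms $\frac{4}{\pi^2}(f_2(\sigma)/n^2 + f_2(L\sigma)/(Lm^2))$ are of size $\asymp \sigma$ with the error term $\eqref{eq:err term Gnm}$ being of lower order — shows the difference $\eigen_{L;n,m}(\sigma) - \eigen_{L;n',m'}(\sigma)$ is bounded away from $0$ for small $\sigma$: the gap at $\sigma=0$ is at least some constant (the form $\theta n^2 + m^2$ takes distinct values $\gg 1$ apart when badly approximable — this is exactly where \eqref{eq:badly approx def} enters, to keep $|\theta(n^2-n'^2) - (m'^2-m^2)|$ from being arbitrarily small when nonzero), while the derivative difference is $O(\sigma)$ uniformly.

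For part (b), combine part (a) with Theorem \ref{thm:mult axes}-type analysis: by part (a) the only possible coincidences for small $\sigma$ involve at least one point with a vanishing coordinate, i.e. of the form $\eigen_{L;n,0}$ or $\eigen_{L;0,m'}$. The plan is to show that at most one "axis point" on each axis, plus at most one interior point, can coincide at a given energy. A point $\eigen_{L;0,m'}(\sigma)$ has leading behaviour $\pi^2 m'^2/L^2 + O(\sigma)$ and $\eigen_{L;n,0}(\sigma)$ has $\pi^2 n^2 + O(\sigma)$, so for a fixed energy level near a Neumann value, the constraint $\theta(\cdot) = (\cdot)$ together with badly approximable $\theta$ pins down $n^2$ (resp. $m'^2$), hence $n$ (resp. $m'$); and an interior point with all indices $\ge 1$ is likewise uniquely determined on its circle by the part (a) argument. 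So the eigenspace can contain at most: one interior point (up to the symmetry, which for $L\neq 1$ is absent — so genuinely one), one $x$-axis point, one $y$-axis point, giving multiplicity $\le 3$. The bound \eqref{eq:Nmult << sqrt(lambda)} then follows because the number of multiple eigenvalues $\le \lambda$ is controlled by the number of axis points $\le \lambda$, which is $\asymp \sqrt{\lambda}$.

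For part (c), if $L$ itself (not just $L^2$) is badly approximable, we want to rule out the coincidence $\eigen_{L;n,0}(\sigma) = \eigen_{L;0,m'}(\sigma)$, which would require $\pi^2 n^2 = \pi^2 m'^2/L^2$ at $\sigma=0$, i.e. $L = m'/n \in \Q$ — impossible. More carefully, one needs the quantitative version: $|n^2 - m'^2/L^2| = |n - m'/L|\cdot|n + m'/L|$ is $\gg 1/m'$ by badly approximable $L$ (the relevant lower bound \eqref{eq:badly approx def} applied to $L$), while the $\sigma$-dependent discrepancy between $\eigen_{L;n,0}$ and $\eigen_{L;0,m'}$ is, from \eqref{eq:k0 asympt 2 terms} and \eqref{eq:RNgap0,m' asympt}, of size $O(\sigma)$ with a computable leading coefficient; so for small $\sigma$ the two families of axis points never collide, leaving only "one interior point together with one axis point" configurations, hence multiplicity $\le 2$.

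The main obstacle I expect is making the neighbourhood $\sigma_0$ genuinely uniform in part (a): one must check that the lower bound on $|\eigen_{L;n,m}(0) - \eigen_{L;n',m'}(0)|$ coming from badly approximable $\theta$, namely $\gg 1/(n^2+n'^2)$ or so, dominates the accumulated derivative discrepancy $\int_0^\sigma |\eigen_{L;n,m}' - \eigen_{L;n',m'}'|\,dt$ uniformly over all competing pairs on the two relevant circles — this is the rectangle analogue of Proposition \ref{prop:error majorized}, and one has to verify that the badly approximable bound on $\theta$ translates into exactly the kind of "gap $\gg$ sum of fourth-power tails" estimate used there, which is why the hypothesis \eqref{eq:badly approx def} (rather than mere irrationality) is essential and why the multiplicity bound is only $3$ rather than $1$.
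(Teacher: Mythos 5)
Your overall plan matches the paper's: part~(a) uses badly-approximable $L^2$ to separate interior points and a derivative comparison to propagate the separation; part~(b) counts at most one interior point plus at most one point on each axis; part~(c) uses badly-approximable $L$ to exclude axis--axis coincidences. Parts~(b) and (c) are essentially right (though in~(b) the appeal to bad approximability is superfluous --- the monotonicity of $n'\mapsto\eigen_{L;n',0}(\sigma)$ and $m'\mapsto\eigen_{L;0,m'}(\sigma)$ already forces at most one axis point of each type per energy level; and in~(c) the stated lower bound $\gg 1/m'$ is too weak to give a \emph{uniform} $\sigma_0$ --- if you push the factorization $|n^2L^2-m'^2|=|nL-m'|\,|nL+m'| \gg \tfrac{1}{n}\cdot(nL+m')$ to the end you get the paper's $\gg 1$, which is what is actually needed).

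The genuine gap is in part~(a), and it is precisely the step you flag as ``the main obstacle I expect.'' Your first formulation --- ``the gap at $\sigma=0$ is at least some constant'' --- is simply false: bad approximability of $\theta=L^2$ gives only $|\eigen_{L;n,m}(0)-\eigen_{L;n',m'}(0)|\gg 1/|n^2-n'^2|$, which is not bounded below. Your later formulation ($\gg 1/(n^2+n'^2)$ ``or so'') is closer in spirit but weaker than what the Diophantine hypothesis gives, and you then defer the crucial comparison with the accumulated derivative discrepancy to ``the rectangle analogue of Proposition~\ref{prop:error majorized}'' without supplying it. The paper resolves exactly this point: because the linear-in-$\sigma$ terms in $\eigen_{L;n,m}'$ are common, the cumulative shift is $O\big(\sigma^2(\tfrac{1}{n^2}+\tfrac{1}{n'^2}+\tfrac{1}{m^2}+\tfrac{1}{m'^2})\big)$, and then one shows $\tfrac{1}{|n^2-n'^2|}\gg \tfrac{1}{n^2}+\tfrac{1}{n'^2}+\tfrac{1}{m^2}+\tfrac{1}{m'^2}$ on the nontrivial range (where the Neumann gap is $<1$, forcing $|n^2-n'^2|\ll|m^2-m'^2|$). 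Without that step the argument does not yield a $\sigma_0$ independent of $(n,m,n',m')$, so the proof of~(a) is incomplete as written.
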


\begin{proof}[Proof of Theorem \ref{thm:L diopant no mult}(a)]

We will show that, under the   hypotheses of Theorem \ref{thm:L diopant no mult}, the sign of
$$\eigen_{L;n,m}(\sigma) - \eigen_{L;n',m'}(\sigma),$$ that does not vanish at the origin, will be maintained in a neighborhood of the origin which is independent of $n,m,n',m'$. At this point we will assume for simplicity that 
$n\ne n'$. We will further assume w.l.o.g. that
$\eigen_{L;n,m}(0) > \eigen_{L;n',m'}(0)$. Abbreviating
$$\theta:=L^2,$$
then necessarily
\begin{equation}
\label{eq:G-G exp dioph}
\begin{split}
\eigen_{L;n,m}(0) - \eigen_{L;n',m'}(0) &= \pi \theta \left((n^{2}\cdot \theta + m^{2})- (n'^{2}\cdot \theta + m'^{2})  \right)
\\&= \pi \theta \left((n^{2}-n'^{2})\cdot \theta +(m^{2}-m'^{2})  \right) \gg (n^{2}-n'^{2})^{-1},
\end{split}
\end{equation}
since $\theta=L^2$ is badly approximable.
On the other hand, \eqref{eq:Galpha rect err} implies that
\begin{equation}
\label{eq:GL-GL exp}
\begin{split}
&\eigen_{L;n,m}(\sigma) - \eigen_{L;n',m'}(\sigma) \\&= \eigen_{L;n,m}(0) - \eigen_{L;n',m'}(0)+
O\left( \sigma^{2}\cdot \left(\left|\frac{1}{n^{2}}+ \frac{1}{n'^{2}}+ \frac{1}{m^{2}}+\frac{1}{m'^{2}} \right|  \right)\right),
\end{split}
\end{equation}
also following from a more direct argument, i.e. a truncated version of the expansion \eqref{eq:k'/2k asymp}, where the error term is of smaller
order of magnitude compared to the secondary term in \eqref{eq:k'/2k asymp}.
Note that the statement of Theorem \ref{thm:L diopant no mult}(a) is trivial, unless the l.h.s. of \eqref{eq:G-G exp dioph} is $< 1$, which will be assumed from now on,  so that
\begin{equation}
\label{eq:n2-n'2<<m2-m'2}
|n^{2}-n'^{2}| \ll |m^{2}-m'^{2}|.
\end{equation}

We claim that
\begin{equation*}
\frac{1}{|n^{2}-n'^{2}|} \gg \left|\frac{1}{n^{2}}+ \frac{1}{n'^{2}}+ \frac{1}{m^{2}}+\frac{1}{m'^{2}}\right|,
\end{equation*}
so that, bearing in mind \eqref{eq:G-G exp dioph}, the main term on the r.h.s. of \eqref{eq:GL-GL exp} is at least of the same order
of magnitude as the error term on the r.h.s. of \eqref{eq:GL-GL exp} not factoring in the factor $\sigma^{2}$, implying
no multiplicities for $\sigma$ sufficiently small.
First, clearly,
\begin{equation*}
\frac{1}{|n^{2}-n'^{2}|} \gg \frac{1}{n^{2}} + \frac{1}{n'^{2}},
\end{equation*}
so we are left to deal with bounding
\begin{equation}
\label{eq:1/n2-n'2>>1/m2-m'2}
\frac{1}{|n^{2}-n'^{2}|} \gg \frac{1}{m^{2}} + \frac{1}{m'^{2}}.
\end{equation}
To this end, we use \eqref{eq:n2-n'2<<m2-m'2} to obtain
\begin{equation*}
\frac{1}{|n^{2}-n'^{2}|} \gg \frac{1}{|m^{2}-m'^{2}|}\gg \frac{1}{m^{2}} + \frac{1}{m'^{2}},
\end{equation*}
which is \eqref{eq:1/n2-n'2>>1/m2-m'2}.
\end{proof}

\begin{proof}[Proof of Theorem \ref{thm:L diopant no mult}(b)-(c)]

Thanks to Theorem \ref{thm:L diopant no mult}(a), for $\sigma$ sufficiently small, a spectral multiplicity is either of the type
\begin{equation}
\label{eq:mult y axis}
k_{n}(\sigma)^{2}+\frac{1}{\theta}\cdot k_{m}(\sigma\cdot L)^{2} = k_{0}(\sigma)^{2}+\frac{1}{\theta}\cdot k_{m'}(\sigma\cdot L)^{2}
\end{equation}
or
\begin{equation}
\label{eq:mult x axis}
k_{n}(\sigma)^{2}+\frac{1}{\theta}\cdot k_{m}(\sigma\cdot L)^{2} = k_{n'}(\sigma)^{2}+\frac{1}{\theta}\cdot k_{0}(\sigma\cdot L)^{2},
\end{equation}
or
\begin{equation}
\label{eq:mult x axis y axis}
k_{n}(\sigma)^{2}+\frac{1}{\theta}\cdot k_{0}(\sigma\cdot L)^{2} = k_{0}(\sigma)^{2}+\frac{1}{\theta}\cdot k_{m'}(\sigma\cdot L)^{2},
\end{equation}
for some $n,m,n',m' \in \Z_{\ge 1}$ (taking into account that $k_{0}(\sigma)^{2}+\frac{1}{\theta}\cdot k_{0}(\sigma\cdot L)^{2}$
is arbitrarily small for $\sigma$ sufficiently small). Given $(n,m)\in \Z_{\ge 1}^2$ or $m'\in\Z_{\ge 1}$,   at most one lattice point $(n',0)$ can possibly satisfy either \eqref{eq:mult x axis} or
\eqref{eq:mult x axis y axis} (resp. \eqref{eq:mult y axis} or \eqref{eq:mult x axis y axis}), and the same holds analogously for $(0,m')$.
It follows that the multiplicities are bounded by $3$, concluding Theorem \ref{thm:L diopant no mult}(b).

The above also shows that if multiplicity $3$ actually occurs with $\sigma$ arbitrarily small, then
\begin{equation}
\label{eq:triple equality}
k_{n}(\sigma)^{2}+\frac{1}{\theta}\cdot k_{m}(\sigma\cdot L)^{2} = k_{0}(\sigma)^{2}+\frac{1}{\theta}\cdot k_{m'}(\sigma\cdot L)^{2} = k_{n'}(\sigma)^{2}+\frac{1}{\theta}\cdot k_{0}(\sigma\cdot L)^{2}
\end{equation}
is satisfied. Using only the latter of the two equalities of \eqref{eq:triple equality}, together with \eqref{eq:k'/2k asymp} and \eqref{eq:k0 asympt 2 terms}
we obtain:
\begin{equation}
\label{eq:quad form 2var small}
0 = \left(k_{0}(\sigma)^{2}+\frac{1}{\theta}\cdot k_{m'}(\sigma\cdot L)^{2}\right)-\left(k_{n'}(\sigma)^{2}+\frac{1}{\theta}\cdot k_{0}(\sigma\cdot L)^{2}\right) = \frac{\pi^{2}}{\theta}(m'^{2}-n'^{2}\theta)+O(\sigma).
\end{equation}
In particular, if, as we assumed, \eqref{eq:triple equality} occurs with $\sigma$ arbitrarily small (i.e. \eqref{eq:quad form 2var small} holds for a sequence $\sigma_{j}\rightarrow 0$),
then the quadratic form
\begin{equation}
\label{eq:quad form n'^2theta-m'^2}
n'^{2}\theta - m'^{2}
\end{equation}
attains arbitrarily small values. However,
\begin{equation*}
n'^{2}\theta - m'^{2} = (n'L-m')\cdot (n'L+m') \gg \frac{1}{n'}\cdot (n'L+m') \gg 1
\end{equation*}
by the assumption on $L$ being badly approximable, contradicting our conclusion on the quadratic
form \eqref{eq:quad form n'^2theta-m'^2} attaining arbitrarily small values.
\end{proof}

\subsection{A bound on the number of degenerate eigenvalues}

Recall Weyl's law \eqref{eq:N spec func def} for the Robin spectrum of $\Rc_{L}$, and the function $N^{\rm mult}(\lambda) $ counting
the number of multiple eigenvalues $\le \lambda$, including multiplicities.
As a corollary of the arguments above, we deduce the bound \eqref{eq:Nmult << sqrt(lambda)} for $N^{\rm mult}(\lambda) $,
thus concluding the proof of Theorem \ref{thm:bnded deg bad approx}.

 \begin{corollary}
 If $L^2$ is badly approximable, then
 \[
 N^{\rm mult}(\lambda) \ll \sqrt{\lambda}
 \]
 \end{corollary}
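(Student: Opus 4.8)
The plan is to bound the number of multiple eigenvalues $\le \lambda$ by bounding the number of lattice points that can participate in a spectral coincidence, and to use the structural description of degeneracies furnished by Theorem~\ref{thm:L diopant no mult}(a). By that result, for $\sigma$ sufficiently small, every nontrivial spectral coincidence $\eigen_{L;n,m}(\sigma) = \eigen_{L;n',m'}(\sigma)$ with $(n,m)\ne(n',m')$ must involve a lattice point lying on one of the coordinate axes, i.e. of the form $(n',0)$ or $(0,m')$. Thus if $\eigen_{L;n,m}(\sigma)$ is a multiple eigenvalue of $\Rc_L$ with $\eigen_{L;n,m}(\sigma)\le\lambda$, then either $(n,m)$ itself lies on an axis, or it coincides with the energy attached to some axis point; in the latter case, since $\eigen_{L;n,m}(0) = \pi^2(n^2 + m^2/L^2)$ is within $O(\sigma)$ of $\eigen_{L;n,m}(\sigma)$ (by the RN-gap bounds, or directly by \eqref{eq:Galpha rect err} and \eqref{eq:k0 asympt 2 terms}), and likewise for the axis point, the corresponding Neumann energies differ by $O(\sigma)$.

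First I would make precise the reduction: let $N^{\mathrm{mult}}(\lambda)$ count multiple eigenvalues $\le\lambda$ with multiplicities. Each such eigenvalue, by Theorem~\ref{thm:L diopant no mult}(b), has multiplicity at most $3$, so it suffices to bound the number of distinct multiple eigenvalues, which in turn is at most the number of lattice points $(n,m)$ with $\eigen_{L;n,m}(\sigma)\le\lambda$ that participate in some coincidence. By the structural dichotomy just described, every participating lattice point is matched (in the sense of lying at $O(\sigma)$-distance in Neumann energy) to an axis point $(a,0)$ or $(0,b)$. The number of axis points $(a,0)$ with $\pi^2 a^2 \le \lambda + O(\sigma)$ is $O(\sqrt{\lambda})$, and similarly for $(0,b)$; since each axis point is matched (again by Theorem~\ref{thm:L diopant no mult}(b)-(c), essentially the argument that at most one off-axis point can pair with a given axis point in each of the relevant equation types) to boundedly many off-axis lattice points in the admissible $\sigma$-range, the total count of participating lattice points is $O(\sqrt{\lambda})$.

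Next I would verify the bounded-matching claim quantitatively. Fix an axis point, say $(n',0)$, giving Neumann energy $\pi^2 n'^2$. An off-axis $(n,m)$ with $n,m\ge 1$ satisfying \eqref{eq:mult x axis} or \eqref{eq:mult x axis y axis} for some $\sigma\in(0,\sigma_0]$ must, by \eqref{eq:Galpha rect err} and \eqref{eq:k0 asympt 2 terms}, satisfy
\[
\bigl| \pi^2(n^2 + m^2/L^2) - \pi^2 n'^2 \bigr| = O(\sigma_0),
\]
but the left-hand side, being $\frac{\pi^2}{\theta}|n^2\theta + m^2 - n'^2\theta|$ with $\theta = L^2$ badly approximable, is either $0$ or $\gg (n^2 + n'^2)^{-1}$ by \eqref{eq:badly approx def} --- this is exactly the mechanism in \eqref{eq:G-G exp dioph}. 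For the count it is cleanest to work at fixed $\sigma=\sigma_0$ small: the set of off-axis $(n,m)$ whose $\sigma_0$-energy lies within a fixed small window of a given axis energy is controlled by a lattice-point count in a thin annulus, and summing over the $O(\sqrt\lambda)$ axis points of energy $\le\lambda$ gives $O(\sqrt\lambda)$ total, since the annuli are essentially disjoint. I would phrase the final counting as: $N^{\mathrm{mult}}(\lambda) \le 3\cdot\#\{\text{axis points with energy}\le\lambda+C\sigma_0\} = O(\sqrt{\lambda})$.

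The main obstacle is making the ``bounded matching'' step genuinely uniform and rigorous rather than heuristic: one must ensure that the $O(\sigma)$-closeness of Neumann energies forced by a coincidence at some small $\sigma$ translates into a finite, $\lambda$-independent count of off-axis partners per axis point, and that double-counting across the three equation types \eqref{eq:mult y axis}--\eqref{eq:mult x axis y axis} does not inflate the bound. This is handled by the observation already in the proof of Theorem~\ref{thm:L diopant no mult}(b) that, given $(n,m)$, at most one axis point $(n',0)$ solves \eqref{eq:mult x axis} or \eqref{eq:mult x axis y axis} and at most one $(0,m')$ solves \eqref{eq:mult y axis} or \eqref{eq:mult x axis y axis}; turning this around, and using that the map $(n',0)\mapsto \pi^2 n'^2$ is injective with image $O(\sqrt\lambda)$ below $\lambda$, each participating off-axis point is charged to one axis point of comparable energy, and the energies of distinct axis points are separated by $\gg 1$, so the fibers of the charging map have bounded size. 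I would close by remarking that the constant in $O(\sqrt\lambda)$ depends on $L$ (through the badly-approximable constant and through $\area(\Rc_L)$) and on the fixed $\sigma_0$, but not on $\lambda$.
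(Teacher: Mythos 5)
Your argument is correct and follows essentially the same route as the paper: by Theorem~\ref{thm:L diopant no mult}(a) every nontrivial coincidence involves an axis point, by part (b) each such coincidence has multiplicity at most $3$, and the number of axis lattice points with Robin energy $\le\lambda$ is $O(\sqrt\lambda)$, which gives the bound. The paper's version is somewhat leaner in that it works directly with the exact equality of Robin energies at the fixed $\sigma$ (so each axis point has at most one off-axis partner, immediately, by part (a)); your detour through $O(\sigma)$-closeness of Neumann energies and thin-annulus considerations is not needed and in fact is not actually used in your final count.
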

 \begin{proof}
Indeed, we saw that the only possible source of multiplicities is when $\eigen_{L;n,m} = \eigen_{L;n',0}$ or $\eigen_{L;n,m} = \eigen_{L;0,m'}$, and that each source, e.g. $ \eigen_{L;0,m'}$, coming from one of the axes can at most contribute a three-fold degeneracy, because we cannot have
$ \eigen_{L;0,m'}= \eigen_{L;0,m''}$ with $m'\neq m''$. Moreover, the number of eigenvalues
$$\eigen_{L;0,m'}=k_0(\sigma)^2+\left(\frac 1L k_{m'}(L\sigma)\right)^2\leq \lambda
$$
is at most the number of $m'\geq 0$ with $k_{m'}(L\sigma)\leq  L\sqrt{\lambda}$, which is $O(\sqrt{\lambda})$ since $k_{m'} =n\pi+O(1)$.
\end{proof}

\subsection{An auxiliary result}

For future reference we record the following result concerning the asymptotic behaviour of the RN gaps, similar to one used previously, but simpler in that it has no control over the error term as $\sigma>0$ is varying:
\begin{proposition}
\label{prop:gap constant ae}
For $n,m\geq 0$,
\[
 \Lambda_{L;n,m}(\sigma) - \Lambda_{L;n,m}(0) = \left(1+\frac 1{L }\right)4\sigma +O_{L,\sigma}\left(\frac 1{1+n^2}+\frac 1{1+m^2}\right) .
\]
\end{proposition}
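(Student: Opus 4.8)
The plan is to separate variables, reducing the two-dimensional RN gap to a sum of two one-dimensional gaps, and then to integrate the exact formula for $(k_n^2)'$. Set $A_n(t) := k_n(t)^2 - (\pi n)^2$; since $k_n(0)=\pi n$ by \eqref{eq:kn(0)=pi n}, the definition \eqref{eq:lambda_mnL def} gives
\[
\Lambda_{L;n,m}(\sigma) - \Lambda_{L;n,m}(0) = A_n(\sigma) + \frac{1}{L^2}\,A_m(\sigma L).
\]
Thus it suffices to prove that $A_n(t) = 4t + O_t\!\left(\frac{1}{1+n^2}\right)$ for all $n\geq 0$, with the implied constant depending only on $t$: applying this with $t=\sigma$ to the first summand and with $t=\sigma L$ to the second (then dividing by $L^2$, which turns the main term $4\sigma L$ into $4\sigma/L$ and the error into an $O_{L,\sigma}$-term), and adding, yields exactly $\bigl(1+\tfrac1L\bigr)4\sigma$ plus the claimed error.

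For $n\geq 1$ the function $k_n$ is analytic on $[0,\infty)$ by Lemma \ref{lem:kn sec expand}(b), so $A_n(t)=\int_0^t (k_n^2)'(\sigma)\,d\sigma$. Repeating the computation in the proof of Lemma \ref{lem:deriv eval}(c) — but keeping it exact rather than truncating, and noting it is valid for every $\sigma>0$ and in both parities once one substitutes $\tan(k_n/2)=\sigma/k_n$ (resp.\ $-\cot(k_n/2)=\sigma/k_n$) from the secular equation \eqref{eq:secular gen R} — one gets
\[
(k_n^2)'(\sigma) = 2k_n(\sigma)k_n'(\sigma) = \frac{4}{1 + \sigma(2+\sigma)/k_n(\sigma)^2}.
\]
Because $\sigma(2+\sigma)\geq 0$ and $k_n(\sigma)\geq \pi n$, this gives $0\leq 4-(k_n^2)'(\sigma)\leq 4\sigma(2+\sigma)/(\pi n)^2$, and integrating over $[0,t]$ yields
\[
\bigl|A_n(t)-4t\bigr| \leq \frac{4}{\pi^2 n^2}\Bigl(t^2 + \tfrac{t^3}{3}\Bigr) = O_t\!\left(\frac{1}{n^2}\right), \qquad n\geq 1 .
\]

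For $n=0$ no work is needed: $k_0(t)\in[0,\pi]$ by Lemma \ref{lem:kn sec expand}(a), so $|A_0(t)|=k_0(t)^2\leq \pi^2$, whence $A_0(t)=4t+O_t(1)$ (the sharper asymptotic \eqref{eq:k0 asympt 2 terms} is not required). Combining the two cases, and using $1/n^2\leq 2/(1+n^2)$ for $n\geq 1$ while $O_t(1)=O_t(1/(1+0^2))$, gives the uniform estimate for $A_n$, and hence the proposition after combining the two coordinates as above.

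I do not foresee a genuine obstacle: the statement is weak by design — the error constant may depend on $\sigma$ (and on $L$) — so the only two points requiring care are using the exact identity for $(k_n^2)'$ above, since the truncated expansion \eqref{eq:k'/2k asymp} is only stated for $\sigma\leq 1$ whereas here $\sigma L$ may be large, and handling the boundary indices $n=0$ or $m=0$ separately via the trivial bound $k_0(\cdot)^2\leq\pi^2$. This is precisely why the error is written $O(1/(1+n^2)+1/(1+m^2))$ rather than $O(1/n^2+1/m^2)$.
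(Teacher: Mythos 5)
Your proposal is correct, and at the structural level it is the same as the paper's: reduce to the one-dimensional problem via the decomposition $\Lambda_{L;n,m}(\sigma)-\Lambda_{L;n,m}(0)=A_n(\sigma)+\tfrac1{L^2}A_m(\sigma L)$, control the generic case $n\ge 1$ by a derivative estimate, and dispose of the index-$0$ boundary cases separately. The paper's proof is a one-line appeal to the truncated derivative expansion \eqref{eq:Galpha rect err} (for $n,m\ge 1$) and to \eqref{eq:RNgap0,m' asympt} (for one index equal to $0$), integrated in $\sigma$. What you do differently, and what it buys: instead of reusing the truncated expansion \eqref{eq:k'/2k asymp}, which the paper states only for $0\le\sigma\le 1$ and whose application here requires $\sigma L\le 1$ (or an implicit ``absorb the finitely many small indices into the $O_{L,\sigma}$ constant'' step that the paper does not spell out), you keep the algebra exact and observe that the secular equation gives the closed form
\[
(k_n^2)'(\sigma)=\frac{4}{1+\sigma(2+\sigma)/k_n(\sigma)^2},
\]
valid for every $\sigma>0$ and both parities. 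Integrating this against the monotone lower bound $k_n(\sigma)\ge\pi n$ gives $|A_n(t)-4t|\le\frac{4}{\pi^2 n^2}(t^2+\tfrac{t^3}{3})$ cleanly, with no restriction on $t$, which is exactly what is needed once the substitution $t=\sigma L$ is made. You also replace the paper's use of the two-term asymptotic \eqref{eq:RNgap0,m' asympt} for the $n=0$ case by the coarser bound $k_0^2\le\pi^2$, which is perfectly adequate since the error term $1/(1+0^2)$ is $O(1)$. Your version is therefore self-contained and marginally more robust than the paper's, while resting on the same idea.
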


\begin{figure}[ht]
\begin{center}
\includegraphics[height=80mm]{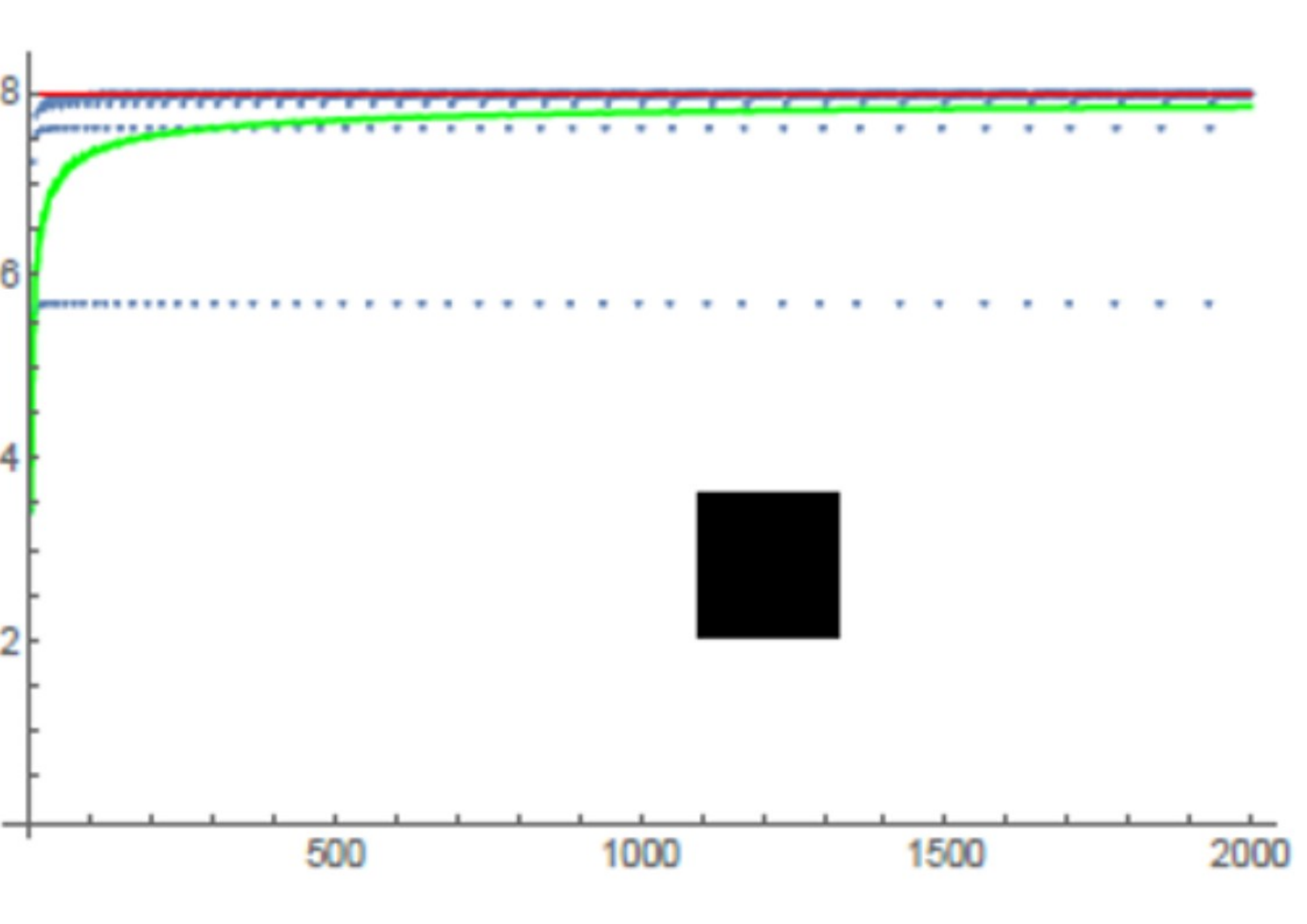}
\caption{$2000$ RN gaps for the square, $\sigma=1$. The bulk of the RN gaps tend to the mean $8$. The secondary
curves correspond to lattice points whose minimal coordinate is small, in particular, lattice points lying on the axes, whose
$RN$ gaps are less than $6$. Red: the mean $8$, green: moving average.}
\label{fig:Rn gaps square nums}
\end{center}
\end{figure}

Note that the ``main term'' is
\begin{equation}
\label{eq:main term RWY}
\left(1+\frac 1{L }\right)4\sigma  = 2\frac{\length \partial \Rc_L}{\area \Rc_L} \sigma
\end{equation}
which is the mean value of the RN gaps, by our general theory \cite{RWY}.
 For $n$ fixed, $m\rightarrow\infty$, the corresponding sequence
of RN gaps is
\begin{equation*}
\begin{split}
&\Lambda_{L;n,m}(\sigma) - \Lambda_{L;n,m}(0) = k_{n}(\sigma)^{2}-k_{n}(0)^{2} +
\frac{1}{L^{2}}\left( k_{m}(\sigma L)^{2}-k_{m}(0)^{2}\right) \rightarrow \\&k_{n}(\sigma)^{2}-k_{n}(0)^{2} + \frac{1}{L^{2}} 4 \sigma L
=k_{n}(\sigma)^{2}-k_{n}(0)^{2} + 4 \frac{\sigma}{L},
\end{split}
\end{equation*}
by Lemma \ref{lem:deriv eval}(d), and we observe that
\begin{equation*}
k_{n}(\sigma)^{2}-k_{n}(0)^{2} + 4 \frac{\sigma}{ L} < 4\sigma(1+1/L)
\end{equation*}
by Lemma \ref{lem:deriv eval}(b), at least, for $\sigma$ sufficiently small.
That is, the RN gaps for this infinite (though rare) sequence of energies
are strictly less than the mean \eqref{eq:main term RWY}. In particular, for $n=0$, we obtain, recalling Lemma
\ref{lem:kn sec expand}(b),
$$k_{0}(\sigma)^{2}-k_{n}(0)^{2} + 4 \frac{\sigma}{L}< 2\sigma +4 \frac{\sigma}{L}= 4\sigma(1/2+1/L) ,$$
again, at least for $\sigma$ sufficiently small. Likewise, one may obtain infinite sequences of RN gaps
that are asymptotic to a value strictly less than \eqref{eq:main term RWY} by fixing $m$ and taking $n\rightarrow\infty$.

Figure \ref{fig:Rn gaps square nums} illustrates $2000$ RN gaps for the square, $\sigma=1$. Here
$k_{0}(\sigma)^{2}-k_{n}(0)^{2} + 4=5.707\ldots$ corresponding to the bottom trend line in the picture,
and $k_{1}(\sigma)^{2}-k_{n}(1)^{2} + 4=7.62275\ldots$ corresponding to the second to bottom trend line, etc.

\begin{proof}

The statement of Proposition \ref{prop:gap constant ae} follows directly from \eqref{eq:Galpha rect err} for $n,m\ge 1$ and
from \eqref{eq:RNgap0,m' asympt} for $n=0$, $m\ge 1$
(and the trivial bound $\Lambda_{L;0,0} = O(1)$).
\end{proof}

\section{Boundedness of Robin-Neumann gaps: Proof of Theorem \ref{thm:RN gaps bnded}}

\begin{lemma}
\label{lem:kn^2<<sigma}
There exists an absolute constant $C_{0}>0$, so that for all $n\ge 0$ and $\sigma>0$,
\begin{equation}
\label{eq:kn(sigma)-kn(0)<C0sigma}
k_{n}(\sigma)^{2}-k_{n}(0)^{2} \le C_{0}\cdot \sigma.
\end{equation}

\end{lemma}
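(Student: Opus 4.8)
The plan is to control $k_n(\sigma)^2 - k_n(0)^2$ uniformly by splitting into two regimes: small $n$ (where the quantity is bounded by a constant, so any $C_0$ works once $\sigma$ is bounded below) and large $n$, where we exploit the decay $k_n'(\sigma) = O(1/n)$ established in Lemma~\ref{lem:deriv eval}(d). The cleanest route is to bound the derivative $(k_n(\sigma)^2)' = 2 k_n(\sigma) k_n'(\sigma)$ uniformly in $\sigma$ and integrate. For $n \ge 1$, write $k_n(\sigma)^2 - k_n(0)^2 = \int_0^\sigma 2 k_n(t) k_n'(t)\, dt$, so it suffices to show $k_n(t) k_n'(t) \le C_0/2$ for all $t > 0$ and all $n \ge 1$, with a similar (easy) treatment of $n = 0$ using Lemma~\ref{lem:kn sec expand}(c), where $k_0(\sigma)^2 \le k_0(\infty)^2 = \pi^2$ anyway so \eqref{eq:kn(sigma)-kn(0)<C0sigma} is immediate for $n=0$ once $C_0 \ge \pi^2$ — but that only helps for $\sigma \gtrsim 1$, so one still needs the derivative bound near the origin, where $k_0(\sigma)^2 = 2\sigma + O(\sigma^2)$ from \eqref{eq:k0 asympt 2 terms} gives exactly the linear growth with a constant close to $2$.

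For the main case $n \ge 1$, I would go back to the secular equation rather than to the expansion \eqref{eq:k'/2k asymp}, since the latter is only proven for $\sigma \le 1$ and we need all $\sigma > 0$. Treating the even case $S_+(k) = k\tan(k/2) = \sigma$ (odd case analogous), implicit differentiation gives $k_n' = 1/S_+'(k_n)$ with $S_+'(k) = \tan(k/2) + \frac{k}{2}(1 + \tan^2(k/2))$. Using the secular relation $\tan(k_n/2) = \sigma/k_n$, one gets $S_+'(k_n) = \frac{\sigma}{k_n} + \frac{k_n}{2} + \frac{\sigma^2}{2 k_n} = \frac{k_n}{2} + \frac{\sigma(2+\sigma)}{2 k_n} \ge \frac{k_n}{2}$, whence $k_n k_n' = k_n / S_+'(k_n) \le 2$. (One should double-check the sign bookkeeping on the odd branch, where $\tan$ is replaced by $-\cot$; the analogous computation gives $S_-'(k_n) = \frac{k_n}{2} + \frac{\sigma(\sigma - 2)}{2k_n}$ or similar, so for $\sigma \le 2$ the bound $\ge k_n/2$ might fail and one instead uses $S_-'(k_n) \ge c\, k_n$ for a slightly smaller absolute $c$, still yielding $k_n k_n' \le C_0'$.) Integrating over $[0,\sigma]$ then yields $k_n(\sigma)^2 - k_n(0)^2 \le C_0 \sigma$ with an absolute $C_0$.

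The main obstacle I anticipate is the uniformity in $\sigma$ all the way to $+\infty$ on the odd branches, since there the secular substitution $-\cot(k_n/2) = \sigma/k_n$ produces a term of the form $\frac{\sigma^2 - 2\sigma}{2k_n}$ (or with a sign that can be negative), so the clean inequality $S'(k_n) \ge k_n/2$ need not hold for moderate $\sigma$; one must check that $S'(k_n)$ is nonetheless bounded below by a fixed positive multiple of $k_n$ uniformly in $n$ and $\sigma$, using that $k_n$ stays in $(n\pi, (n+1)\pi)$ and hence $k_n \to \infty$ while the correction term is $O(\sigma^2/k_n)$ — but as $\sigma \to \infty$ with $n$ fixed, $k_n$ stays bounded, so one must instead observe directly that $k_n(\sigma)^2 \le k_n(\infty)^2 = (n+1)^2\pi^2$ is bounded and the crossover happens at $\sigma \asymp n$, where $C_0 \sigma \asymp C_0 n \ge (n+1)^2 \pi^2 / \ldots$ fails for large $n$ — so actually the derivative bound is essential and the resolution is that for $\sigma \gtrsim k_n \asymp n$ one has $k_n' \to 0$ fast enough. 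I would handle this by proving the uniform bound $k_n(\sigma) k_n'(\sigma) \le C_0/2$ directly from $k_n' = 1/S'(k_n)$ together with the elementary fact that $S_\pm'(k) \ge \frac{k}{2} - O(1)$ on the relevant intervals (the $O(1)$ coming from the bounded $\tan$-term evaluated via the secular equation, which contributes $\sigma/k_n \le \tan$-bound), carefully separating $k_n$ close to the left endpoint $n\pi$ (small $\sigma$) from the interior.
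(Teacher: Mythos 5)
Your derivative-bound approach is correct, and it is genuinely different from (and somewhat cleaner than) the paper's argument. The paper proves the lemma by a case split on the size of $\sigma$ relative to $n$: for $\sigma \gg n$ it uses the trivial bound $k_n(\sigma)^2-k_n(0)^2 \le (k_n(\sigma)-k_n(0))(k_n(\sigma)+k_n(0)) \le \pi\cdot 2(n+1)\pi \ll \sigma$; for $\sigma \ll n$ it bounds $k_n(\sigma)-n\pi$ directly via $\arctan(x)\le x$ applied to the secular equation; and it handles $n=0$ separately. Your approach instead proves the single uniform derivative bound $k_n(\sigma)\,k_n'(\sigma)\le 2$ for all $n\ge 0$ and all $\sigma>0$ and integrates, which gives the explicit constant $C_0=4$ in one stroke with no case analysis. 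You are right not to use \eqref{eq:k'/2k asymp} (restricted to $\sigma\le 1$) and instead to rederive the exact identity $S_{\pm}'(k_n)=\frac{k_n}{2}+\frac{\sigma(2+\sigma)}{2k_n}$ from implicit differentiation plus the secular substitution.

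However, your worry about the odd branch is a red herring, caused by a sign slip in your mental arithmetic. On the odd branch $S_-(k)=-k\cot(k/2)=\sigma$ one has $\cot(k/2)=-\sigma/k$, so
\[
S_-'(k)=-\cot\!\left(\tfrac{k}{2}\right)+\frac{k}{2}\bigl(1+\cot^2\!\left(\tfrac{k}{2}\right)\bigr)
=\frac{\sigma}{k}+\frac{k}{2}+\frac{\sigma^2}{2k}
=\frac{k}{2}+\frac{\sigma(2+\sigma)}{2k},
\]
\emph{identical} to the even branch, because both $-\cot(k/2)=\sigma/k$ and $\cot^2(k/2)=\sigma^2/k^2$ are nonnegative; the putative $\sigma(\sigma-2)$ never appears. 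Hence $S_\pm'(k_n)\ge k_n/2$ uniformly, $k_n k_n'\le 2$, and $(k_n^2)'\le 4$ hold without exception, and the last paragraph of hedging about moderate $\sigma$, the crossover $\sigma\asymp n$, and subtracting an $O(1)$ from $S'$ is unnecessary. The case $n=0$ needs no separate treatment either: the same identity holds along the first even branch, $k_0 k_0'\le 2$ for all $\sigma>0$, and $k_0(0)=0$, so integration gives $k_0(\sigma)^2\le 4\sigma$ directly. In short, your core computation is right and yields a shorter and tighter proof than the paper's; just delete the incorrect caveat about $S_-'$ and the ensuing discussion.
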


\begin{proof}

For $\sigma>c_{0}\cdot (n+1)$ with $c_{0}$ sufficiently small parameter to be chosen later, we use the trivial bound
\begin{equation*}
k_{n}(\sigma)^{2}-k_{n}(0)^{2} = (k_{n}(\sigma)-k_{n}(0))\cdot (k_{n}(\sigma)+k_{n}(0)) \le \pi \cdot 2(n+1)\pi \le \frac{2\pi^{2}}{c_{0}}\cdot \sigma.
\end{equation*}
Otherwise, for $\sigma\le c_{0}\cdot (n+1)$ with $c_{0}$ sufficiently small, assume that $n\ge 1$, and will take care of $n=0$ separately below. Recall the secular equation
\begin{equation}
\label{eq:secular gen R b}
\tan(k) = \frac{2\sigma k}{k^{2}-\sigma^{2}}.
\end{equation}
In this case, the denominator on the r.h.s. of \eqref{eq:secular gen R b} is bounded away from $0$,
so that the r.h.s. of \eqref{eq:secular gen R b} is
$\le 4\frac{\sigma}{k}<4c_{0}\frac{n+1}{n\pi}$ arbitrarily small by appropriately choosing $c_{0}$, and then, since $\arctan(x)\le x$ for $x>0$,
\begin{equation*}
k_{n}(\sigma)-k_{n}(0) = k_{n}(\sigma)-n\pi \le 4\frac{\sigma}{k_{n}(\sigma)}.
\end{equation*}
We then have
\begin{equation*}
\begin{split}
k_{n}(\sigma)^{2}-k_{n}(0)^{2} &= (k_{n}(\sigma)-k_{n}(0))\cdot (k_{n}(\sigma)+k_{n}(0)) \le 4\frac{\sigma}{k_{n}(\sigma)} \cdot 2(n+1)\pi
\le 8\pi \frac{n+1}{n\pi}\cdot\sigma \\&= 8\frac{n+1}{n} \cdot \sigma \le 16\sigma.
\end{split}
\end{equation*}

Finally, we take care of the remaining case of $n=0$, under the assumption $\sigma \le c_{0}(n+1) = c_{0}$. Recall that (Lemma
\ref{lem:kn sec expand}(b)) here $k_{0}(\sigma)>\sigma$. Denote $k=k_{0}(\sigma)< \frac{\pi}{2}$ for $\sigma<\frac{\pi}{2}$
(Proposition \ref{lem:kn sec expand}(c)). Therefore, we can use $k<\frac{\pi}{2}$, so that $k<\tan(k)$, and \eqref{eq:secular gen R} reads
\begin{equation*}
k<\tan{k} = \frac{2\sigma k}{k^{2}-\sigma^{2}},
\end{equation*}
and manipulate with that to write (recall that the denominator is positive)
\begin{equation*}
k^{2}-\sigma^{2} < 2\sigma,
\end{equation*}
and then
\begin{equation*}
k^{2}<2\sigma+\sigma^{2}<3\sigma,
\end{equation*}
valid for $\sigma < c_{0}$, provided that $c_{0}$ is sufficiently small.
\end{proof}

\begin{lemma}
\label{lem:kn^2>>sigma}

There exists an absolute constant $c_{0}>0$, so that for all $n\ge 0$ and $\sigma\in [0,1]$,
\begin{equation}
\label{eq:kn2>>sigma}
k_{n}(\sigma)^{2}-k_{n}(0)^{2} \ge c_{0}\cdot \sigma.
\end{equation}

\end{lemma}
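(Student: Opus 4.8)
The plan is to establish a lower bound $k_n(\sigma)^2 - k_n(0)^2 \ge c_0 \sigma$ uniformly in $n \ge 0$ and $\sigma \in [0,1]$, mirroring the structure of the proof of Lemma \ref{lem:kn^2<<sigma} but in the opposite direction. I would split into the small-$n$ (bounded) regime and the large-$n$ regime, the split being governed by some threshold like $n \le N_0$ versus $n > N_0$ for an absolute constant $N_0$, or equivalently $\sigma$ large versus small relative to $n$.

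For the regime where $n$ is large (say $\sigma \le c_0(n+1)$ for small $c_0$, so $k_n(\sigma)$ stays in $[n\pi, (n+1/2)\pi]$ by Lemma \ref{lem:kn sec expand}(b)), I would work directly from the secular equation $\tan(k_n) = \frac{2\sigma k_n}{k_n^2 - \sigma^2}$. Writing $k_n(\sigma) = n\pi + \delta_n$ with $\delta_n \in [0,\pi/2)$, we have $\tan(\delta_n) = \frac{2\sigma k_n}{k_n^2 - \sigma^2}$, and since $\tan x \ge x$ for $x \in [0,\pi/2)$ one gets $\delta_n \le \frac{2\sigma k_n}{k_n^2-\sigma^2}$ — but I need the reverse inequality. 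Instead I would use $\tan x \le 2x$ on a slightly smaller interval (valid for $x$ up to roughly $1.16$, in particular on $[0,1]$), or more cleanly note that since the right-hand side $\frac{2\sigma k_n}{k_n^2-\sigma^2}$ is small (bounded by $\approx 4\sigma/(n\pi)$, which is $\le 4c_0/\pi$), $\delta_n$ is small, and on small arguments $\tan(\delta_n) \le \delta_n(1 + \delta_n^2) \le 2\delta_n$; hence $\delta_n \ge \frac{1}{2}\cdot\frac{2\sigma k_n}{k_n^2-\sigma^2} \ge \frac{\sigma k_n}{k_n^2} = \frac{\sigma}{k_n} \ge \frac{\sigma}{(n+1)\pi}$. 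Then $k_n(\sigma)^2 - k_n(0)^2 = \delta_n(2n\pi + \delta_n) \ge \delta_n \cdot 2n\pi \ge \frac{2n\pi}{(n+1)\pi}\sigma = \frac{2n}{n+1}\sigma \ge \sigma$ once $n \ge 1$.

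For the complementary regime $\sigma > c_0(n+1)$, i.e. $\sigma$ bounded below in terms of $n$ (only finitely many $n$, namely $n < \sigma/c_0 \le 1/c_0$), I would use the fact that $k_n(\sigma)$ is strictly increasing in $\sigma$ (Lemma \ref{lem:kn sec expand}(a)) with $k_n(0) = n\pi$ and $k_n(\sigma) \to (n+1)\pi$; combined with analyticity/smoothness and a compactness argument on the finitely many relevant indices and the bounded $\sigma$-range, one gets a uniform lower bound of the form $c\sigma$. Concretely, for each such fixed $n$ the function $\sigma \mapsto k_n(\sigma)^2 - k_n(0)^2$ is continuous, positive for $\sigma > 0$, and by Lemma \ref{lem:deriv eval}(a) (or Lemma \ref{lem:kn sec expand}(c) when $n=0$, using $k_0(\sigma)^2 = 2\sigma + O(\sigma^2)$) has a positive right-derivative at $0$, so $k_n(\sigma)^2 - k_n(0)^2 \ge c_n \sigma$ on $[0,1]$ for some $c_n > 0$; taking the minimum over the finite index set yields the absolute constant. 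The case $n = 0$ deserves a separate direct treatment: from $k_0(\sigma)^2 = 2\sigma + O(\sigma^2)$ (Lemma \ref{lem:kn sec expand}(c)), we get $k_0(\sigma)^2 - k_0(0)^2 = k_0(\sigma)^2 \ge \sigma$ for $\sigma$ sufficiently small, and for $\sigma$ in a bounded range bounded away from $0$ one uses monotonicity and $k_0(\sigma) > \sigma$ again.

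The main obstacle I anticipate is getting the lower bound on $\delta_n = k_n(\sigma) - n\pi$ to be genuinely \emph{uniform} in $n$, since naively $\tan(\delta_n) \ge \delta_n$ gives an upper bound on $\delta_n$, the wrong direction; the trick is to exploit that $\delta_n$ is actually small (forcing $\tan$ to be nearly linear so the two-sided comparison $\delta_n \le \tan \delta_n \le 2\delta_n$ applies), and this smallness is exactly what the constraint $\sigma \le c_0(n+1)$ buys us. A secondary subtlety is handling the transition point and the $n=0$ case so that all constants patch together into a single absolute $c_0$; this is bookkeeping rather than a real difficulty, handled by taking $c_0$ to be the minimum of finitely many explicit positive constants.
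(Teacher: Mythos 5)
Your proof is correct and follows essentially the same route as the paper: manipulate the secular equation $\tan k_n = \frac{2\sigma k_n}{k_n^2-\sigma^2}$ to get a lower bound $\delta_n = k_n(\sigma)-n\pi \gtrsim \sigma/k_n$ by exploiting that $\delta_n$ stays in a range where $\tan$ is comparable to the identity, then multiply by $k_n(\sigma)+k_n(0) \asymp k_n$, and treat $n=0$ separately via $k_0(\sigma)^2 = 2\sigma+O(\sigma^2)$. One remark: the regime split $\sigma \le c_0(n+1)$ versus $\sigma > c_0(n+1)$ that you import from the proof of Lemma \ref{lem:kn^2<<sigma} is unnecessary here, because the standing hypothesis $\sigma\le 1$ combined with $k_n\ge\pi$ (for $n\ge 1$) already forces the right-hand side of the secular equation to be at most $\frac{2\pi}{\pi^2-1}<1$ uniformly, hence $\delta_n$ is bounded away from $\pi/2$ and $\tan\delta_n\ll\delta_n$ holds for all $n\ge 1$ at once; the paper handles all $n\ge 1$ in one stroke this way, avoiding the compactness argument over the finitely many small $n$.
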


\begin{proof}

The main argument behind the proof of Lemma \ref{lem:kn^2>>sigma} is similar to that of Lemma \ref{lem:kn^2<<sigma}.
First, assume that $n\ge 1$, so that here $k_{n}(\sigma)\ge \pi$, and the denominator of the r.h.s. of \eqref{eq:secular gen R}
is bounded away from $0$. It then follows that $\tan k > \frac{2\sigma}{k}$, and then
$$k_{n}(\sigma)-k_{n}(0) =k_{n}(\sigma)-n\pi > c_{1}\frac{\sigma}{k_{n}(\sigma)}$$
with $c_{1}$ sufficiently small. We then have for $n\ge 1$,
\begin{equation*}
k_{n}(\sigma)^{2}-k_{n}(0)^{2} = (k_{n}(\sigma)-k_{n}(0))\cdot (k_{n}(\sigma)+k_{n}(0)) > c_{1}\frac{\sigma}{k_{n}(\sigma)} \cdot k_{n}(\sigma)
= c_{1}\cdot\sigma
\end{equation*}
which is \eqref{eq:kn2>>sigma} with $c_{1}>0$ in place of $c_{0}$.
That \eqref{eq:kn2>>sigma} holds with $n=0$ follows directly from \eqref{eq:k0 asympt 2 terms}
for all $\sigma\in [0,1]$, at the expense of further decreasing the constant to some $c_{0}$.
\end{proof}

\begin{proof}[Proof of Theorem~\ref{thm:RN gaps bnded}]

Recall that the energies $\{\lambda_{j}^{L}(\sigma)\}_{j\ge 1}$ are the sorted list of $\{\eigen_{L;n,m}(\sigma)\}_{n,m\ge 0}$.
The main obstacle in inferring the upper and the lower bounds \eqref{eq:dn<Csig} and \eqref{eq:dn>csig}
of Theorem \ref{thm:RN gaps bnded} directly from the corresponding bounds
in lemmas \ref{lem:kn^2<<sigma} and \ref{lem:kn^2>>sigma}
is that the numbers $\eigen_{L;n,m}(\sigma)$ can mix, so that the gaps $d_{j}$ will not, in general, be equal
to $\eigen_{L;n,m}(\sigma)-\eigen_{L;n,m}(0)$. We will overcome this obstacle by appealing to an argument inspired by an idea behind
the proof of ~\cite[Theorem 1.7]{RWY}, for both \eqref{eq:dn<Csig} and \eqref{eq:dn>csig}. Recall the spectral function $N(\lambda)=N_{L;\sigma}(\lambda)$ as in \eqref{eq:N spec func def}. Set
\begin{equation}
\label{eq:AL def dep on L}
a_{L}:=1+\frac{1}{L} = \frac{1}{2} \frac{\length(\Rc_{L})}{\area(\Rc_{L})}.
\end{equation}

\vspace{2mm}

First we prove \eqref{eq:dn<Csig}.
Lemma~\ref{lem:kn^2<<sigma} yields a number $C_{0}>0$ so that if $t:=\lambda_{j}(0)$, then
for every $\sigma>0$ and $n,m\ge 0$ so that $\eigen_{L;n,m}(0) = t$, one has
$$\eigen_{L;n,m}(\sigma)  \le t+C_{0}\cdot \sigma+C_{0}\cdot \frac{1}{L^{2}}\sigma L = t+C_{0}\left(1+\frac{1}{L}\right)\sigma.
$$
We deduce that $N_{\sigma}(t+C_{0}a_{L}\cdot \sigma) \ge j$ with $a_{L}$ as in \eqref{eq:AL def dep on L},
and hence $\lambda_{j}(\sigma) \le t+C_{0}a_{L}\cdot\sigma$.
Finally, we infer $d_{j}(\sigma) = \lambda_{j}(\sigma)-t \le C_{0}a_{L}\cdot \sigma$, which, thanks to \eqref{eq:AL def dep on L} and \eqref{eq:d(sigma) def}, is identified as \eqref{eq:dn<Csig}.

Next, we show \eqref{eq:dn>csig}. Using the same idea as above,
Lemma \ref{lem:kn^2>>sigma} gives an absolute $c_{0}>0$ so that if $t:=\lambda_{j}(0)$, then
for every $\sigma\in [0,1]$ and $n,m\ge 0$ with $\eigen_{L;n,m}(0) = t$, one has $$\eigen_{L;n,m}(\sigma) \ge t+
c_{0}a_{L}\cdot \sigma.$$
Therefore, for every $t'<t+c_{0}\cdot \sigma$, $N_{\sigma}(t') < j$, and thus $$\lambda_{j}(\sigma)\ge t+c_{0}a_{L}\cdot\sigma.$$ Finally we obtain
 $$d_{j}(\sigma) = \lambda_{j}(\sigma)-t \ge c_{0}a_{L}\sigma$$
 which is \eqref{eq:dn>csig}, on recalling \eqref{eq:AL def dep on L} and \eqref{eq:d(sigma) def} again.
\end{proof}

\section{Pair correlation: Proof of Theorem \ref{thm:PC diophantine}}

Fix $f\in C_c(\R)$ even. The associated pair correlation function is
\[
R_2^\sigma(f,N):=\frac 1N \sum_{1 \leq j\neq k\leq N} f\left(\frac{\lambda_j(\sigma)-\lambda_k(\sigma)}{\bar s} \right),
\]
where the mean spacing $\overline{s}$ is given by \eqref{eq:sbar mean spac}.

\begin{proposition}\label{prop:comparison}
For any rectangle $\Rc_L$, and any fixed  $\sigma>0$
\[
\Big|  R_2^\sigma(f,N)-R_2^0(f,N) \Big| \ll N^{-1/10} \to 0.
\]
\end{proposition}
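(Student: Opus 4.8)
The plan is to prove Proposition~\ref{prop:comparison} by a direct term-by-term comparison of the two pair-correlation sums, exploiting the fact that, by Proposition~\ref{prop:gap constant ae}, each Robin eigenvalue is a bounded perturbation of the corresponding Neumann eigenvalue, with the error decaying in the indices $(n,m)$. First I would fix the identification of indices: since $L^2$ is Diophantine irrational, the Neumann eigenvalues $\eigen_{L;n,m}(0) = \pi^2(n^2 + m^2/L^2)$ are all simple, so there is a well-defined bijection between $\{1,\dots,N\}$ and the first $N$ lattice points $(n,m)$ ordered by $\eigen_{L;n,m}(0)$. Crucially I must check that the \emph{same} ordering (for $N$ large, up to negligibly many swaps) is induced by the Robin eigenvalues; this follows because the RN gaps are uniformly bounded (Theorem~\ref{thm:RN gaps bnded} or Proposition~\ref{prop:gap constant ae}), so reordering only affects pairs whose Neumann values lie within $O_\sigma(1)$ of each other, and by the classical lattice-point count in a thin annulus there are $O(\sqrt\lambda)$ such indices up to height $\lambda$, i.e. $O(N^{1/2})$ of the first $N$ — already within the claimed error after dividing by $N$.

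Next I would write, for the matched pairs $(n,m) \leftrightarrow (n',m')$,
\[
\frac{\lambda_j(\sigma) - \lambda_k(\sigma)}{\bar s} - \frac{\lambda_j(0) - \lambda_k(0)}{\bar s} = \frac{1}{\bar s}\Big( d^{(n,m)} - d^{(n',m')}\Big),
\]
where by Proposition~\ref{prop:gap constant ae} each $d^{(n,m)} = 4\sigma(1+1/L) + O_{L,\sigma}(1/(1+n^2) + 1/(1+m^2))$, so the \emph{difference} of two RN gaps is $O_{L,\sigma}\big(\tfrac{1}{1+n^2}+\tfrac{1}{1+m^2}+\tfrac{1}{1+n'^2}+\tfrac{1}{1+m'^2}\big)$ — the constant main term $4\sigma(1+1/L)$ cancels. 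Then, using that $f$ is Lipschitz (or at least $C^1_c$), $f$ of the Robin argument differs from $f$ of the Neumann argument by $O(|d^{(n,m)} - d^{(n',m')}|)$, and this is a summable, decaying quantity. So
\[
\big| R_2^\sigma(f,N) - R_2^0(f,N)\big| \ll \frac 1N \sum_{1\le j\ne k\le N} \Big( \tfrac{1}{1+n^2}+\tfrac{1}{1+m^2}+\tfrac{1}{1+n'^2}+\tfrac{1}{1+m'^2}\Big) + \frac{(\text{reordering defect})}{N},
\]
but one has to be careful here: the double sum over $j\ne k$ has $\sim N^2$ terms, and summing $1/(1+n^2)$ over one index while the other ranges freely gives something like $N \cdot \log N$ or $N \cdot (\text{tail})$, not automatically $o(N)$. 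The resolution is that for \emph{fixed} $j$, the difference $|f(\text{Robin arg}) - f(\text{Neumann arg})|$ is not merely bounded by the RN-gap difference: since $f$ has compact support, the $k$-sum is effectively restricted to $k$ with $|\lambda_j(0) - \lambda_k(0)| \ll \bar s$, i.e. to an annulus of bounded width around height $\lambda_j(0)$, which contains $O(\sqrt{\lambda_j(0)}) = O(\sqrt N)$ indices. Hence for each $j$ the inner sum is $O(\sqrt N \cdot \sup_k |d^{(n,m)} - d^{(n',m')}|) = O(\sqrt N)$, giving total $O(N \cdot \sqrt N / N) = O(\sqrt N)$ — too weak. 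To squeeze out $N^{-1/10}$ one must combine the annulus count with the decay: split indices into those with $\min(n,m) \le N^{\delta}$ (few of them, $O(N^{\delta + 1/2})$ in the whole range, contributing boundedly each) and those with $\min(n,m) > N^{\delta}$ (where the gap difference is $O(N^{-2\delta})$), optimize $\delta$, and pick up the $N^{-1/10}$.

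The main obstacle I anticipate is precisely this quantitative bookkeeping: making the compact support of $f$, the lattice-point remainder term in the annulus (the classical $O(\lambda^{1/3})$ or even $O(\lambda^{1/2})$ bound suffices, as the paper hints), and the index-decay of the RN-gap fluctuation work together to beat the trivial $O(\sqrt N / N)$ and reach a genuine negative power of $N$. I would handle the reordering issue and the annulus lattice count as a single lemma (count of lattice points $(n,m)$ with $\pi^2(n^2+m^2/L^2) \in [\lambda, \lambda + O(1)]$ is $O(\lambda^{1/2})$, uniformly), then feed it into the two-regime split above. Everything else — the cancellation of the mean gap, the Lipschitz estimate on $f$, Weyl's law to convert $\lambda_N \asymp N$ — is routine given the results already established in the excerpt.
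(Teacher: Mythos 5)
Your outline captures the right coarse structure, which is also the paper's: pass to the lattice-point parametrization, use Proposition~\ref{prop:gap constant ae} for the decay of the RN-gap fluctuations, exploit the compact support of $f$ to restrict the inner sum to a thin annulus, and split the lattice points according to whether $\min(n,m)$ is small or large. The observation that the constant main term $4\sigma(1+1/L)$ cancels in the difference of RN gaps is correctly identified and essential. (The paper sidesteps your matching/reordering preamble more cleanly by noting that the pair-correlation sum does not depend on the ordering of the eigenvalues, so it can be written directly over lattice points $(n,m),(n',m')$; your bijection-plus-reordering bookkeeping is not wrong, just unnecessary.)

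There is, however, a genuine gap in your quantitative closing. If your only input for counting near-degenerate pairs is the per-annulus lattice-remainder bound $O(\lambda^{1/3}) = O(N^{1/3})$, the two-regime split does not produce any negative power of $N$. Concretely: the small-$\min$ regime contributes at most
\[
\frac{1}{N}\cdot N^{\delta+1/2}\cdot N^{1/3} = N^{\delta-1/6},
\]
which requires $\delta<1/6$; but the large-$\min$ regime contributes at most
\[
\frac{1}{N}\cdot N\cdot N^{1/3}\cdot N^{-2\delta} = N^{1/3-2\delta},
\]
which requires $\delta>1/6$. At the crossover $\delta=1/6$ both bounds are $\Theta(1)$, so nothing is gained, and your parenthetical ``contributing boundedly each'' also underestimates the per-index annulus count (it is $O(N^{1/3})$, not $O(1)$). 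The missing ingredient is an \emph{averaged} bound on the number of near-degenerate Neumann pairs rather than a per-annulus one. The paper invokes precisely this: by Bleher--Lebowitz \cite{BL} (Lemma 3.1) the Neumann pair-correlation sum of a rectangle satisfies $R_2^0(g,N)\ll_g N^\epsilon$, i.e.\ the total number of pairs $(j,k)\le N$ with $|\lambda_j(0)-\lambda_k(0)|=O(1)$ is $\ll N^{1+\epsilon}$, much stronger than the $N\cdot N^{1/3}$ obtained by summing the per-annulus bound. With that input the large-$\min$ contribution improves to $N^\epsilon/M^2$, and the optimization $M=N^{1/18+\epsilon/3}$ against $MN^{-1/6}$ delivers the claimed $N^{-1/10}$. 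Without some such averaged (variance-type) estimate on lattice points in thin annuli, your proposed optimization cannot close.
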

\begin{proof}
For notational convenience, in what follows we will neglect the asymptotically constant mean spacing \eqref{eq:sbar mean spac} being equal to $\frac{4\pi}{\area \Rc_L}$, and proceed as if $\{\lambda_{j}\}$ had mean spacing asymptotic to unity.
Note that a feature of the pair correlation function is that, by its definition, the ordering of the eigenvalues is irrelevant. Therefore we can compute it by taking, for some large $N\gg 1$,
\[
\widetilde N(\sigma)= \#\{k: \lambda_k(\sigma)\leq N\} = \#\{n,m\geq 0: \Lambda_{L;n,m}(\sigma)\leq N\}
\]
which, by Weyl's law, is asymptotically $\widetilde N(\sigma) \approx N$, and then
\[
 R_2^\sigma(f,\widetilde N) = \frac 1{\widetilde N(\sigma)}\sum_{ \substack{\Lambda_{L;n,m}(\sigma), \Lambda_{L;n',m'}(\sigma)\leq N\\
 (n,m)\neq (n',m')}}f\Big(\Lambda_{L;n,m}\left(\sigma ) -\Lambda_{L;n',m'} (\sigma\right) \Big).
\]
Since $\Lambda_{L;n,m}(\sigma) = \Lambda_{L;n,m}(0) + O_{\sigma}(1)$, we also know that $\widetilde N(\sigma) \sim \widetilde N(0)$.

Therefore we can bound the difference between the Neumann and Robin pair correlations as
\begin{equation}
\label{eq:pair corr from N to R}
\begin{split}
&|R_2^\sigma(f,\widetilde N) - R_2^0(f,\widetilde N) | \ll \\&\frac 1{N} \sum_{\substack{\Lambda_{L;n,m}(0), \Lambda_{L;n',m'}(0)  \leq N\\ (n,m)
\neq (n',m')}} \left|
f\left(\Lambda_{L;n,m}\left(\sigma ) -\Lambda_{L;n',m'} (\sigma\right) \right)-f\left(\Lambda_{L;n,m}(0) -\Lambda_{L;n',m'}(0) \right)
\right|
\end{split}
\end{equation}

Set
\[
d_{n,m}(\sigma):=\Lambda_{L;n,m}(\sigma) - \Lambda_{L;n,m}(0)
\]
(not to be confused with the actual RN gaps $\lambda_k(\sigma)-\lambda_k(0)$). These are bounded, say $d_{n,m}(\sigma)\leq C$ (which depends on $L$ and $\sigma$), moreover by Proposition~\ref{prop:gap constant ae}, there is $C_1>0$ so that
\begin{equation}\label{def of C_1}
\begin{split}
\left|d_{n,m}(\sigma)- \left(1+\frac 1L\right)4\sigma\right|&=\left |\Lambda_{L;n,m}(\sigma)-\Lambda_{L;n,m}(0) - \left(1+\frac 1L\right)4\sigma \right| \\&\leq C_1\left(\frac 1{1+n^2}+\frac 1{1+m^2}\right).
\end{split}
\end{equation}

Assume that $f$ is supported in $[-\rho,\rho]$.
Take a function $g\in C_c^\infty(\R)$ that is non-negative: $ g\geq 0$, and so that
\begin{equation}
\label{eq:g==maxf'}
g\equiv \max |f'| \quad {\rm on}  \quad  [-2(\rho+2C_1), 2(\rho+2C_1)],
\end{equation}
 where $C_1$ is as in \eqref{def of C_1}. In particular $g\geq |f'|$, as depicted in Figure~\ref{fig:dominatingfunction}.
\begin{figure}[ht]
\begin{center}
\includegraphics[height=40mm]{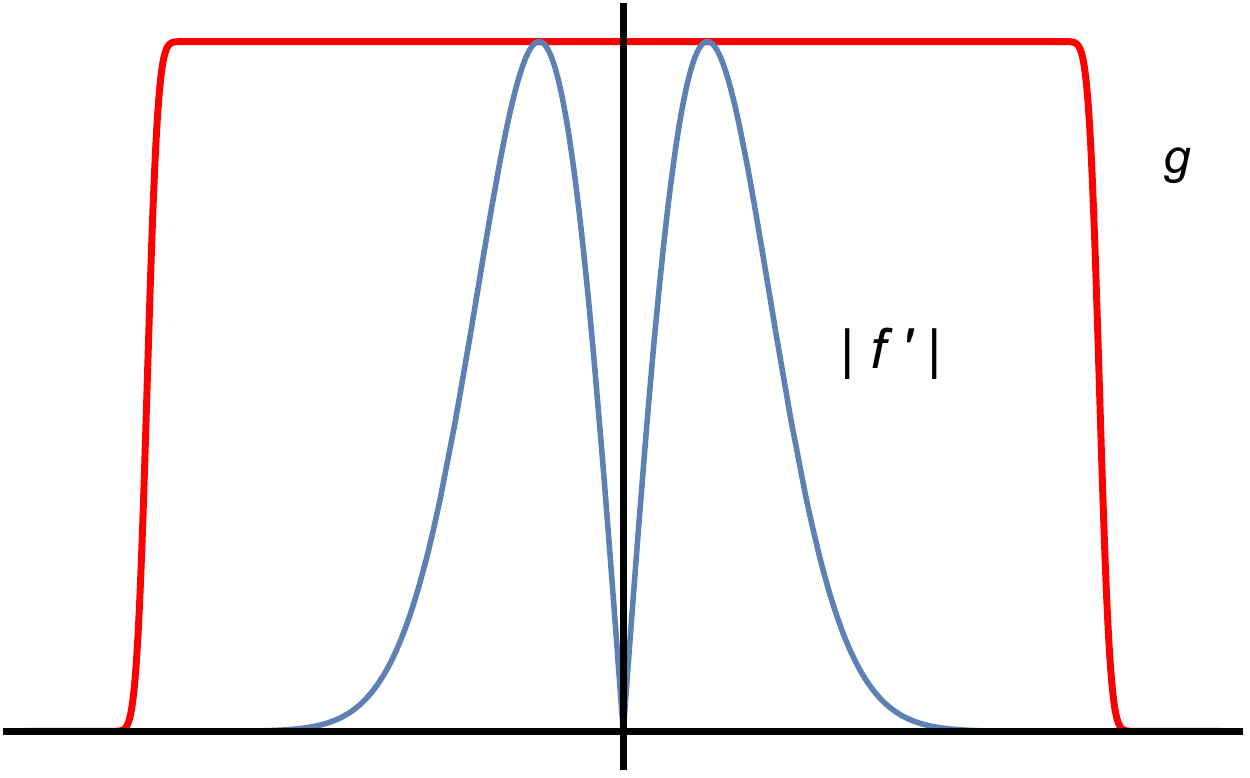}
\caption{ Sketch of $|f'|$ and $g$.}
\label{fig:dominatingfunction}
\end{center}
\end{figure}

We first show that
\begin{multline}\label{step 1 comparison}
\left|  R_2^\sigma(f,\widetilde{N})-R_2^0(f,\widetilde{N}) \right| \ll
\\
\frac 1N \sum  g\Big( \Lambda_{L;n,m}(0) - \Lambda_{L;n',m'}(0) \Big) \cdot \left(\frac 1{1+n^2}+\frac 1{1+m^2}
+\frac 1{1+n'^2} +\frac 1{1+m'^2}\right) .
\end{multline}
Indeed, to contribute to $R_2^\sigma(f,\widetilde{N})-R_2^0(f,\widetilde{N})$ in \eqref{eq:pair corr from N to R}, it is forced that at least one of the two eigenvalue differences
\begin{equation}
\label{eq:eig diff sigma, 0}
\Lambda_{L;n,m} (\sigma ) -\Lambda_{L;n',m'} (\sigma ), \; \Lambda_{L;n,m}(0) -\Lambda_{L;n',m'} (0)
\end{equation}
are in $ \supp f \subseteq [-\rho,\rho]$.
Since the difference between these two expressions \eqref{eq:eig diff sigma, 0} is $$d_{n,m}(\sigma)-d_{n',m'}(\sigma) \in [-2C_1,2C_1]$$ by \eqref{def of C_1}, if one of the expressions \eqref{eq:eig diff sigma, 0} is in $[-\rho,\rho]$, then both
\begin{equation}
\label{eq:eig diff sigma,0 bnd}
\Lambda_{L;n,m} (\sigma ) -\Lambda_{L;n',m'} (\sigma ), \, \Lambda_{L;n,m}(0) -\Lambda_{L;n',m'} (0) \in [-(\rho+2C_1),\rho+2C_1].
\end{equation}

For such a pair, we have by the mean value theorem
\begin{multline}\label{used mean value thm}
 f\Big(\Lambda_{L;n,m}\left(\sigma ) -\Lambda_{L;n',m'} (\sigma\right) \Big)-f\Big(\Lambda_{L;n,m}(0) -\Lambda_{L;n',m'}(0) \Big)
 \\
 = \Big(d_{n,m}(\sigma) - d_{n',m'}(\sigma)\Big) f'\Big(\xi(n,m,n',m')\Big)
\end{multline}
for some $\xi(n,m,n',m')$ between $\Lambda_{L;n,m}\left(\sigma ) -\Lambda_{L;n',m'} (\sigma\right) $ and $\Lambda_{L;n,m}(0) -\Lambda_{L;n',m'}(0)$. Proposition~\ref{prop:gap constant ae} implies that
\begin{equation}
\label{eq:d diff<=C1 sum}
 d_{n,m}(\sigma) - d_{n',m'}(\sigma) \leq C_1\left( \frac 1{1+n^2}+\frac 1{1+m^2} +\frac 1{1+n'^2}+\frac 1{1+m'^2}\right) .
\end{equation}
In addition, if some summand $$\left|
f\left(\Lambda_{L;n,m}\left(\sigma ) -\Lambda_{L;n',m'} (\sigma\right) \right)-f\left(\Lambda_{L;n,m}(0) -\Lambda_{L;n',m'}(0) \right)
\right|$$ on the r.h.s. of \eqref{eq:pair corr from N to R} does not vanish, then
\begin{equation}
\label{eq:f'(xi)<=g(diff)}
\left| f'\left(\xi(n,m,n',m')\right)\right|\leq \max|f'|  = g\left( \Lambda_{L;n,m}(0) -\Lambda_{L;n',m'}(0)  \right),
\end{equation}
by \eqref{eq:g==maxf'} and \eqref{eq:eig diff sigma,0 bnd}. The claimed inequality \eqref{step 1 comparison}
follows upon substituting \eqref{eq:f'(xi)<=g(diff)} and \eqref{eq:d diff<=C1 sum} into \eqref{used mean value thm}, and finally into
\eqref{eq:pair corr from N to R}.

\vspace{2mm}

Next, we claim that the r.h.s. of \eqref{step 1 comparison} satisfies the inequality
\begin{multline}\label{eq:next step}
\frac 1N \sum  g\left( \Lambda_{L;n,m}(0) - \Lambda_{L;n',m'}(0) \right) \cdot
\left(\frac 1{1+n^2}+\frac 1{1+m^2} +\frac 1{1+n'^2}+\frac 1{1+m'^2} \right)
 \ll N^{-\frac 1{10} }
\end{multline}
where the sum is over all pairs with $(n,m)\neq (n',m')$. 
To see this, we take a large parameter $M>0$ to be chosen later, and divide the summands into two categories: (1) those with $\min(n,m,n',m')>M$, and (2) the rest. An individual summand with $\min(n,m,n',m')>M$ is bounded
\[
 \frac 1{1+n^2}+\frac1{1+m^2} +\frac 1{1+n'^2}+\frac1{1+m'^2} \ll \frac 1{M^2},
\]
so that the total contribution of the summands of the 1st category is bounded by
\[
\frac 1N \sum  g\left( \Lambda_{L;n,m}(0) - \Lambda_{L;n',m'}(0) \right) \frac 1{M^2}  =  \frac 1{M^2} R_2^0(g,N),
\]
by forgetting the restriction $\min(n,m,n',m')>M$.
Since the pair correlation function for any rectangle is bounded \cite[Lemma 3.1]{BL} by
\[
 R_2^0(g,N) \ll_g N^\varepsilon
\]
it follows that the contribution to the sum \eqref{eq:next step} of summands of the $1$st category is dominated by
\begin{equation}
\label{eq:1st cat contr}
\ll \frac{N^\epsilon}{M^2} .
\end{equation}

We next treat the contribution to the sum \eqref{eq:next step} of $2$nd category summands, those with at least one of the coordinates small $\leq M$, say $n\leq M$, where use the trivial bound
\[
\frac 1{1+n^2}+  \frac 1{1+m^2} +\frac 1{1+n'^2}+\frac 1{1+m'^2}  \ll 1.
\]
Hence the contribution to the sum \eqref{eq:next step} of the $2$nd category summands is bounded by
\begin{multline}
\label{eq:2nd cat sum bnd}
\frac 1N \sum_{0\leq n\leq M}  \sum_{0\leq m \ll \sqrt{N}}
\sum_{0\leq n',m'\ll \sqrt{N}} g\left(  \Lambda_{L;n,m}(0) - \Lambda_{L;n',m'}(0)  \right)
\\
\ll \frac 1N  \sum_{0\leq n\leq M}  \sum_{0\leq m \ll \sqrt{N}}
  \#\left\{ (n',m')\in [0,\sqrt{N}]^2: | \Lambda_{L;n,m}(0) - \Lambda_{L;n',m'}(0)|\leq C_2 \right\},
\end{multline}
since $\supp g\subseteq [-C_2,C_2]$ with $C_{2}=\rho+2C_{1}$.
Given $(n,m)$, the term
$$\#\left\{ (n',m')\in [0,\sqrt{N}]^2: | \Lambda_{L;n,m}(0) - \Lambda_{L;n',m'}(0)|\leq C_2 \right\} $$
is the number of lattice points in a quarter of an elliptic annulus of width
$\ll \frac{1}{\sqrt{\Lambda_{L;n,m}(0)}}$ and constant area (both depending on $C_2$).

While we expect the number of points in such a narrow annulus to be very small, say $\ll N^\epsilon$, we are unable to show this for irrational $L^2$. Instead we give a crude bound of $\ll N^{1/3}$:
We use the classical bound on the number of lattice point in a dilated ellipse (for the circle this is due to Sierpinski in 1906)
\[
\#\{(n',m')\in \Z^2: \Lambda_{L;n',m'}(0)\leq x \} = Ax  + O(x^{1/3})
\]
where $A$ is the area of the ellipse.
Therefore a crude bound for the number of points in the annulus is
\begin{multline*}
\#\{ (n',m')\in\Z_{\geq 0}^2: | \Lambda_{L;n,m}(0) - \Lambda_{L;n',m'}(0)|\leq C_2\}   =
\\
 A( \Lambda_{L;n,m}(0) +C_2)   - A( \Lambda_{L;n,m}(0) -C_2)
 +O\Big( \Lambda_{L;n,m}(0) ^{1/3} \Big)
 \ll \Lambda_{L;n,m}(0)^{1/3} .
\end{multline*}
Since $\Lambda_{L;n,m}(0)\ll N$, we obtain
\begin{equation}
\label{eq:bnd lat pnts N^1/3}
\#\left\{ (n',m')\in \Z_{\geq 0}^2: | \Lambda_{L;n,m}(0) - \Lambda_{L;n',m'}(0)|\leq C_{2} \right\}  \ll N^{1/3} .
\end{equation}
Summing the inequality \eqref{eq:bnd lat pnts N^1/3}
(whose l.h.s. are clearly greater or equal than the summands on the r.h.s. of \eqref{eq:2nd cat sum bnd}) over $n\leq M$ and $m\ll \sqrt{N}$, and substituting into \eqref{eq:2nd cat sum bnd}
yields the bound
\begin{equation}
\label{eq:contr 2nd cat}
\begin{split}
&\frac 1N \sum_{0\leq n\leq M}  \sum_{0\leq m \ll \sqrt{N}}
\sum_{0\leq n',m'\ll \sqrt{N}} g\left(  \Lambda_{L;n,m}(0) - \Lambda_{L;n',m'}(0)  \right)
\\&\ll
\frac 1N \sum_{1\leq n\leq M}   \sum_{0\leq m \ll \sqrt{N}} N^{1/3} \ll  MN^{-1/6} .
\end{split}
\end{equation}
for the contribution of the $2$nd category summands.
Consolidating the contributions \eqref{eq:1st cat contr} and \eqref{eq:contr 2nd cat} of the $1$st
and the $2$nd categories summands respectively, we finally obtain a bound
for the sum in \eqref{eq:next step}:
\begin{equation*}
\begin{split}
&\frac 1N \sum  g\left( \Lambda_{L;n,m}(0) - \Lambda_{L;n',m'}(0) \right) \cdot
\left(\frac 1{1+n^2}+\frac 1{1+m^2} +\frac 1{1+n'^2}+\frac 1{1+m'^2} \right)
 \\&\ll  \frac{N^\epsilon}{M^2} +  MN^{-1/6} \ll N^{-1/10 }
 \end{split}
\end{equation*}
on taking $M=N^{1/18+\epsilon/3}$.
\end{proof}

\subsection*{Acknowledgements}

This research was supported by the European Research Council (ERC) under the European Union's Horizon 2020 research and innovation programme (Grant agreement No. 786758) and by the Israel Science Foundation (grant No. 1881/20). We wish to thank Nadav Yesha for many stimulating conversations,
 and a referee for a simplification of the arguments in section \ref{sec:1d problem}.

\subsection*{Data availability statement}
Data sharing is not applicable to this article as no new data were created or analyzed in this study.

\end{document}